\def\@abssec#1{\vspace{.05in}\footnotesize \parindent .2in
{\bf #1. }\ignorespaces}
\newtheorem{theorem}{Theorem}[section]
\newtheorem{lemma}[theorem]{Lemma}
\newtheorem{proposition}[theorem]{Proposition}
\newtheorem{definition}[theorem]{Definition}
\newtheorem{remark}[theorem]{Remark}
\DeclareMathOperator{\divg}{div}
\DeclareMathOperator{\Div}{div}
\newcommand{\R}{\ensuremath{\mathbb{R}}}
\newcommand{\RR}{\ensuremath{\mathcal{R}}}
\newcommand{\WW}{\ensuremath{\mathcal{W}}}
\newcommand{\BB}{\ensuremath{\widetilde{\mathcal{B}}}}
\newcommand{\bb}{\ensuremath{\mathcal{B}}}
\newcommand{\Z}{\ensuremath{\mathbb{Z}}}
\newcommand{\N}{\ensuremath{\mathbb{N}}}
\newcommand{\Id}{\ensuremath{\mathrm{Id}}}
\newcommand{\dd}{\mathrm{d}}
\allowdisplaybreaks \numberwithin{equation}{section}
\begin{document}

\title[3D Boussinesq temperature front problem]{On the regularity of temperature fronts for the 3D viscous Boussinesq system}
\author{Omar Lazar}
\address{}
\email{}
\author{Yatao Li}

\author{Liutang Xue}

\subjclass[2010]{Primary 76D03, 35Q35, 35Q86}
\keywords{3D Boussinesq system, temperature front, striated regularity}
\date{\today}
\maketitle

\begin{abstract}
  We study the temperature front problem for the 3D viscous Boussinesq equation. We prove that the $C^{k,\gamma}$ ($k\geq 1$, $0<\gamma< 1$) and
$W^{2,\infty}$ regularity of a temperature front is locally preserved along the evolution
as well as globally preserved under a smallness condition in a critical space. In particular, beside giving another proof of the main result  in \cite{GGJ20}, we also extend it to a more general class of regular patch.

\end{abstract}

\section{Introduction}
In this paper, we study the 3D incompressible  Boussinesq system with viscous dissipation.
It is a well-known evolution equation which models the natural convection phenomena in geophysical flows (\cite{Ped87,Maj03}) and reads as follows, for any $ (t,x) \in \R_+\times\R^d$ where $d\geq 2$,
\begin{equation}\label{eq:BousEq}
\begin{split}
\begin{cases}
  \partial_t v + v\cdot\nabla v - \nu\Delta v + \nabla p = \theta e_d, \\
   \mathrm{div}\,v=0, \\
   \partial_t \theta + v\cdot \nabla \theta = 0, \\
  (\theta, v)|_{t=0}(x) = (\theta_0,v_0)(x),
\end{cases}
\end{split}
\end{equation}
Where  $e_d=(0,\cdots,0,1)^t$ and $\nu>0$ is the viscosity coefficient (without loss of generality, we  assume that $\nu=1$ for simplicity).
The unknowns are the scalar temperature $\theta$, the velocity field $v=(v^1,\cdots,v^d)^t$ and the scalar pressure $p$.

The Boussinesq system may be viewed as a generalization of very important models from incompressible fluid mechanics (\cite{Maj86,CD}). Indeed, when $\theta\equiv 0$, the Boussinesq system is nothing but the incompressible Navier-Stokes equation. The force models the so-called vortex-stretching phenomena which is observed in either 2D or 3D. In the 2D case, the Boussinesq equation is analogous to the 3D axisymmetric Euler equations with swirl \cite{MB02}.

In the last decades, the Boussinesq system has been widely studied and many results have been obtained.
Regarding the 2D viscous Boussinesq system \eqref{eq:BousEq}, the global regularity issue for smooth solutions has been first mentioned in a paper by Moffatt
\cite{Mof01} and was rigorously  proved by Chae \cite{Cha} and also by  Hou, Li \cite{HLi05}. We can also mention a work by Abidi and Hmidi \cite{AbH07} who proved the global well-posedness of strong solution with rough initial data.

 As far as weak solutions are concerned, several results have been proved. In particular,
Hmidi and Keraani \cite{HK07} studied the Cauchy problem for the 2D Boussinesq system \eqref{eq:BousEq} associated to data in the energy space $(\theta_0,v_0)\in L^2\times L^2$. The uniqueness of such a weak solutions has been proved by Danchin and Pa\"icu in \cite{DanPa08}. The regularity issue in Sobolev spaces (based on $L^2$) has been also studied in a work by Hu, Kukavica and Ziane \cite{HKuZ15}.

Some authors have also considered the case where the Boussinesq system has some anisotropic term or some partial dissipation. These cases are physically  relevant to consider when one wants to take into account large scale atmospheric and oceanic flows; we refer the interested reader to the following works \cite{AdCaoWu11,CaoW13,HKR10,LLT13,LiT16}.

Regarding  the Boussinesq equation \eqref{eq:BousEq} in higher dimension, {\it{i.e.}}  $d\geq 3$, Danchin and Pa\"icu \cite{DanPa08B}
studied the Cauchy problem associated to sufficiently regular initial data and proved that the system is globally well-posed under the smallness of the critical quantity

$\|v_0\|_{L^{d,\infty}}+ \nu^{-1}\|\theta_0\|_{L^{\frac{d}{3}}}$.

The latter result may be viewed as a generalization of the classical Fujita's and Kato's result on the incompressible 3D Navier-Stokes equations \cite{FujK64,Kato84}.
Later on, the same authors \cite{DanPa08} have been able to not only weakened the above Besov space assumption but also the smallness condition by considering Lorentz spaces.

As far as the inviscid Boussinesq system ({\it{i.e.}} $\nu=0$ in \eqref{eq:BousEq}) is concerned, it is worth recalling that, unlike the incompressible 2D Navier-Stokes equation, the global regularity issue for the 2D Boussinesq system is still an outstanding open problem.
One expects global regularity according to numerical simulations presented in \cite{ES94} at least when the domain is periodic,
however, in \cite{LH14b} the author proposed a potential scenario of finite-time blowup using a numerical approach if the domain is bounded and smooth (see also \cite{DFeff}).
The latter work \cite{LH14b} has opened the door to the introduction of several new 1D and 2D modified Boussinesq model 
 \cite{CKY15,CHKLSY,KT18, CCL19} and different physical scenarios in 3D \cite{Wid}.

One of the main goal of this paper is to study the
{Boussinesq temperature patch problem}.
This problem deals with the propagation of discontinuity of the temperature along a free interface which makes the study physically relevant \cite{Maj03}.
More precisely, we study the evolution of a initial temperature which is defined as the characteristic function of a bounded domain  $D_0\subset \R^d$ which is further assumed to be simply connected. The patch structure is preserved along the evolution so that $\theta(x, t) = \mathds{1}_{D(t)}(x)$ where
$D(t) = \psi_t(D_0)$, and where the particle trajectory $\psi_t(x)$ is given by
\begin{equation}\label{eq:flowmap}
  \frac{\partial \psi_t(x)}{\partial t} = v (t, \psi_t(x)),\quad \psi_t(x)|_{t=0}=x.
\end{equation}
This special type of patch solutions and the study of the propagation of their regularity along the evolution have been initiated some decades ago for the 2D incompressible Euler equations with initial vorticity $\omega_0 = \mathds{1}_{D_0}$. The global propagation of the $C^{k,\gamma}$ regularity (where $(k,\gamma) \in \mathbb{Z}_+ \times (0,1)$) of the initial patch in the case of the 2D incompressible Euler equation goes back to a work of Chemin \cite{Chem91}
using the para-differential calculus. Another approach - using geometric cancellations in some singular integral operators - has allowed Bertozzi and Constantin \cite{BerC} to give a new proof of the persistance of the regularity  of the patch. Some years after, Gamblin and Saint-Raymond \cite{GSR95} studied the 3D case. They were able to prove the local well-posedness for any initial patch with $C^{1,\gamma}$ regularity. We refer also to a work of Danchin \cite{Dan99} for similar regularity persistence result in higher dimension.

As we have recalled, the main question usually raised when studying patch-type solutions is whether the initial regularity of the patch is preserved along the evolution or not. The temperature patch problem of Boussinesq system \eqref{eq:BousEq} was initiated in a work of Danchin and Zhang \cite{DanZ17}. By using  para-differential calculus
they were able to prove that for sufficiently regular data $\theta_0\in B^{\frac 2q-1}_{q,1}(\R^2)$, $q\in(1, 2)$ with temperature patch initial data $\mathds{1}_{D_0}$,
the evolved temperature patch is globally well-posed and persists the $C^{1,\gamma}$-regularity for all time;
while in higher dimension for $\theta_0\in B^0_{d,1}\cap L^{\frac{d}{3}}(\R^d)$, $d\ge 3$,
which also contains $\mathds{1}_{D_0}$,
the same global $C^{1,\gamma}$-regularity persistence result is obtained under a smallness assumption of
$\|v_0\|_{L^{d,\infty}}+ \nu^{-1}\|\theta_0\|_{L^{\frac{d}{3}}}$.
More recently, Gancedo and Garcia-Ju$\mathrm{\acute{a}}$rez \cite{GGJ17}
in the 2D case gave a different proof of the global the $C^{1,\gamma}$ propagation of temperature patch,
and furthermore they obtained the $W^{2,\infty}$
and $C^{2,\gamma}$ regularity persistence of the patch boundary. They were able to use some hidden cancellations
in the time-dependent Calder$\mathrm{\acute{o}}$n-Zygmund operators and in the tangential derivative along the patch boundary.
Recently, in \cite{ChMX21}, Chae, Miao and Xue were able to
prove that the $C^{k,\gamma}$  (with $k\ge 1$ and $\gamma\in (0,1)$) regularity of the temperature patch is preserved along the evolution  of the 2D Boussinesq system
\eqref{eq:BousEq}.

Regarding the 3D case, the best known results deal with temperature patch with regularity $W^{2,\infty}$ and $C^{2,\gamma}$. Indeed, these cases have been studied independently by Danchin and Zhang \cite{DanZ17} and Gancedo and Garc\'ia-Ju\'arez \cite{GGJ20} in a recent work. They
considered the initial patch of non-constant values (e.g. $\theta_0 = \bar{\theta}_0 \mathds{1}_{D_0}$ 
with $\bar{\theta}_0$ defined on $\overline{D}_0$) which is usually called the temperature front initial data (\cite{Maj03}),
and proved that the  $C^{1,\gamma}$, $W^{2,\infty}$ and $C^{2,\gamma}$-regularity is preserved along the evolution if the critical quantity $\|v_0\|_{\dot H^{\frac{1}{2}}} + \|\theta_0\|_{L^1}$ is small enough.
They also proved that the temperature front preserved its regularity locally in time (without any smallness assumption).

Our goal in this paper is not only to improve the regularity of temperature front but also to give an alternative proof than the one in \cite{GGJ20}.
Roughly speaking, our generalization may be stated as follows: assume that $\partial D_0\in C^{k,\gamma}, \textrm{where} \  (k, \gamma) \in \mathbb{Z}_+ \times (0, 1) \  \textrm{then\,\,}\,\,\partial D(t)\in C^{k,\gamma}\,\, \textrm{for\ any\,\,} t\in [0,T]$ ($T=\infty$ if a smallness condition is assumed in some critical space). More precisely, we are going to prove the following theorem.
\begin{theorem}\label{thm:exi-reg}
 Let $v_0\in H^{\frac{1}{2}}(\R^3)$ be a divergence-free vector field and assume that  $\theta_0 \in L^1\cap L^s(\R^3)$, $s>3$.
Then, there exists a positive time $T$, which depends on $v_0,\theta_0$ and $s$, such that the Boussinesq system \eqref{eq:BousEq}
has a unique solution $(v,\theta,\nabla p)$ on $[0,T]$ which satisfies, for any $q\in(3,6)$, that
\begin{equation}\label{eq:v-the-es}
  v \in C([0,T], H^{\frac{1}{2}}(\R^3)) \cap L^2([0,T], H^{\frac{3}{2}}\cap L^\infty) \cap \widetilde{L}^1([0,T], \dot B^{1+\frac{3}{q}}_{q,\infty}),
  \quad \theta \in L^\infty([0,T],L^1\cap L^s(\R^3)).
\end{equation}
Let $T^*$ be the maximal time of existence of such a solution,

then $T^* = \infty$ provided that there exists a constant $c_*>0$ such that
\begin{equation}\label{eq:data-cond}
  \|v_0\|_{L^{3,\infty}(\R^3)} + \|\theta_0\|_{L^1(\R^3)}\leq c_*.
\end{equation}

Moreover, if one starts initially with the following temperature front
\begin{align}\label{eq.th0}
  \theta_0 (x)= \theta^*_1(x) \mathds{1}_{D_0}(x) + \theta_2^*(x) \mathds{1}_{D_0^c}(x),
\end{align}
where $D_0\subset \R^3$ is a bounded simply connected domain,
then one has the following regularity persistence results for all $t<T^*$.

\begin{enumerate}[(1)]
\item If $\partial D_0 \in C^{1,\gamma}(\R^3)$, $\gamma\in(0,1)$, $\theta^*_1\in L^\infty(\overline{D_0})$,
$\theta^*_2 \in L^1\cap L^\infty(\overline{D_0^c})$ and $v_0\in H^1\cap W^{1,3}(\R^3)$, then we have
\begin{align}
  \theta(x,t) = \theta^*_1(\psi^{-1}_t(x)) \mathds{1}_{D(t)}(x) + \theta^*_2(\psi_t^{-1}(x)) \mathds{1}_{D(t)^c}(x),
\end{align}
and
\begin{equation*}
  \partial D(t) = \psi_t(D_0) \in L^\infty([0,T], C^{1,\gamma}(\R^3)).
\end{equation*}

\item If additionally, $\partial D_0 \in W^{2,\infty}(\R^3)$, $\theta^*_1\in C^{\mu_1}(\overline{D_0})$,
$\theta^*_2\in C^{\mu_2}\cap L^1(\overline{D_0^c})$ where $\mu_1,\mu_2\in (0,1)$, and $v_0\in  W^{1,p}(\R^3)$, $p\in(3,\infty)$.
Then, $$ v\in L^\infty([0,T],W^{1,p})\cap L^\gamma([0,T], W^{2,\infty}))$$ with $1\leq \gamma < \frac{2p}{p+3}$ and
\begin{equation*}
  \partial D(t) \in L^\infty([0,T], W^{2,\infty}(\R^3)).
\end{equation*}

\item If additionally,\, $ \partial D_0 \in C^{k,\gamma}(\R^3)$,\,  $k \geq 2$, $\gamma\in(0, 1)$, $\theta^*_1\in C^{k-2,\gamma}(\overline{D_0})$,
$\theta_2^*\in C^{k-2,\gamma}\cap L^1 (\overline{D_0^c})$ and $v_0 \in H^1\cap W^{k,p}(\R^3)$\,
where $p\in (3, \infty)$. Then we have
\begin{equation}
  \partial D(t) \in L^\infty([0,T], C^{k,\gamma}(\R^3)).
\end{equation}
\end{enumerate}
\end{theorem}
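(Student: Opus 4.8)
The plan is to separate the argument into two blocks: the construction of the unique solution in the class \eqref{eq:v-the-es} together with its global continuation under \eqref{eq:data-cond}, and then the propagation of the geometric regularity of the front, which I would handle by a striated-regularity analysis built on tangential vector fields transported by the flow \eqref{eq:flowmap}.

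For the solution itself I would run a Fujita--Kato type fixed-point scheme adapted to the coupling. Applying the Leray projector $\mathbb{P}$ and using the Duhamel formula $v(t)=e^{t\Delta}v_0-\int_0^t e^{(t-s)\Delta}\mathbb{P}\big(v\cdot\nabla v-\theta e_3\big)\,ds$, together with the fact that $\theta$ is merely transported so that $\|\theta(t)\|_{L^1\cap L^s}$ is conserved by incompressibility, I would close the estimate in the space dictated by \eqref{eq:v-the-es}. The energy part $v\in C_tH^{1/2}\cap L^2_tH^{3/2}$ comes from an $\dot H^{1/2}$ energy balance in which the buoyancy force is controlled through the embedding $L^{3/2}\hookrightarrow\dot H^{-1/2}$ (here $\theta\in L^1\cap L^s$ with $s>3$ lies well inside $L^{3/2}$), while the parabolic gain $v\in\widetilde L^1_t\dot B^{1+3/q}_{q,\infty}$ follows from Chemin--Lerner maximal-regularity bounds for the heat semigroup. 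For the global part, the decisive point is that the quantity in \eqref{eq:data-cond} is scaling critical; I would show it remains small along the evolution and use it to control the nonlinearity subcritically, excluding blow-up by a continuation criterion in the spirit of Danchin--Pa\"icu but phrased in $L^{3,\infty}\times L^1$. Since $\widetilde L^1_t\dot B^{1+3/q}_{q,\infty}\hookrightarrow L^1_t\mathrm{Lip}$, one gets $\nabla v\in L^1_tL^\infty$, so the flow $\psi_t$ is bi-Lipschitz, $D(t)=\psi_t(D_0)$ is a well-defined simply connected domain, and $\theta$ has the transported form \eqref{eq.th0} stated in the theorem.

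For the front regularity I would encode $\partial D_0\in C^{k,\gamma}$ by a non-degenerate family of tangential vector fields $X_0$ and follow their pushforwards $X(t)=(\psi_t)_*X_0$, which solve $\partial_t X+v\cdot\nabla X=\partial_X v$ with $\partial_X:=X\cdot\nabla$. The algebraic backbone is that this transport makes $\partial_X$ commute with the material derivative $D_t:=\partial_t+v\cdot\nabla$, so the striated field $\partial_X v$ satisfies a forced Stokes system $\big(D_t-\Delta\big)\partial_X v=\partial_X(\theta e_3)-\partial_X\nabla p+[\partial_X,\Delta]v$, and the buoyancy source inherits striated smoothness because the tangential derivative of $\mathds{1}_{D(t)}$ is one degree more regular than its full gradient. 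For case (1) I would run this at the $C^\gamma$ level: a Hölder transport estimate gives $\|X(t)\|_{C^\gamma}\lesssim\|X_0\|_{C^\gamma}\exp\big(\int_0^t\|\nabla v\|_{C^\gamma}\,ds\big)$ once the striated bound on $\nabla v$ is upgraded using $v_0\in H^1\cap W^{1,3}$, which yields $\partial D(t)\in L^\infty_tC^{1,\gamma}$. Cases (2) and (3) follow by bootstrapping: the extra initial regularity ($W^{1,p}$, resp. $H^1\cap W^{k,p}$ with $p>3$) feeds maximal-regularity estimates for $\partial_X v$ and its higher iterates $\partial_X^{k-1}\nabla v$, with $\theta_1^*,\theta_2^*\in C^{k-2,\gamma}$ supplying exactly the striated source regularity needed. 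Here the time singularity at $t=0$ is structural: the heat-semigroup scaling $\|v(t)\|_{W^{2,\infty}}\lesssim t^{-(p+3)/(2p)}$ forces the restriction $1\le\gamma<\tfrac{2p}{p+3}$ in the stated $L^\gamma_tW^{2,\infty}$ bound, while $\gamma\ge1$ still makes $\int_0^t\|v\|_{W^{2,\infty}}\,ds$ finite, so a Gr\"onwall argument on $D^2\psi_t$ gives $\partial D(t)\in L^\infty_tW^{2,\infty}$.

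I expect the main obstacle to be the commutator analysis underlying these striated estimates, namely controlling $[\partial_X,\mathbb{P}]$ and the commutators with the parabolic solution operator against nonlocal operators in three dimensions, where the front is a codimension-one surface requiring a genuine family rather than a single tangential field. The subtlety is that $X(t)$ is only transport-controlled, hence governed by $\int_0^t\|\nabla v\|_{L^\infty}$, while $v$ acquires its striated smoothness through parabolic smoothing that is singular at $t=0$; reconciling these two time-scales so that the exponential transport factor does not destroy the borderline time-integrability of the higher-order striated norms is the technical heart of the proof. It is precisely here that the Chemin--Lerner spaces $\widetilde L^\rho_t\dot B^{s}_{q,\infty}$ and the hidden cancellation in $\partial_X$ acting on $\theta e_3$ must be exploited most carefully.
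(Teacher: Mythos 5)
Your overall architecture (critical well-posedness plus transported tangential vector fields) matches the paper's, but two of your load-bearing steps do not hold as stated.

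First, the embedding $\widetilde{L}^1_t(\dot B^{1+3/q}_{q,\infty})\hookrightarrow L^1_t(\mathrm{Lip})$ is false: with third index $\infty$ the frequency sum defining $\|\nabla v\|_{L^\infty}$ does not converge, and indeed the paper stresses that at the regularity $v_0\in H^{1/2}$, $\theta_0\in L^1\cap L^s$ the velocity need \emph{not} be in $L^1_T(\mathrm{Lip})$. This breaks your route to uniqueness and to the bi-Lipschitz flow at the base level. The paper circumvents this by proving uniqueness directly in $\widetilde{L}^1_T(\dot B^{1+3/q}_{q,\infty})$ via a Lagrangian-free argument (Lemma \ref{lem:uniq}, a variant of Danchin--Pa\"icu's Theorem 3 allowing the smallness to be realized on short time intervals with $r=2$), and only obtains $\nabla v\in L^1_T(L^\infty)$ under the \emph{additional} hypotheses of case (1) ($v_0\in H^1\cap W^{1,3}$), through the estimate $v\in\widetilde{L}^1_T(B^{2}_{\infty,\infty})\subset L^1_T(B^{1+\gamma}_{\infty,1})$. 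You need to either supply a Lipschitz-free uniqueness argument or restrict where you invoke the flow.

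Second, for the front regularity your "forced Stokes system" for $\partial_X v$ with source $\partial_X(\theta e_3)-\partial_X\nabla p+[\partial_X,\Delta]v$ names the right cancellation but does not produce it, and the non-striated part of the forcing is exactly where the argument fails: $\theta e_3$ is merely $L^\infty_t(L^1\cap L^\infty)$, so parabolic smoothing yields only $v\in \widetilde{L}^1_t(B^2_{\infty,\infty})$, which misses $\nabla^2v\in L^1_t(L^\infty)$ by a logarithm; no amount of time-integrability bookkeeping closes this. The paper's mechanism is structural: it introduces the good unknown $\Gamma=\Omega-\RR_{-1}\theta$ (with $\RR_{-1}=\Lambda^{-2}\nabla\wedge(\cdot\,e_3)$), so that the rough buoyancy is absorbed into the dissipation and only the commutator $[\RR_{-1},v\cdot\nabla]\theta$ survives (estimated by Lemma \ref{lem:comm-es}); the velocity then splits as $\nabla^2 v=\nabla^2\Lambda^{-2}\nabla\wedge\Gamma+\nabla^3\partial_3\Lambda^{-4}\theta+(\nabla^2\Lambda^{-2}\theta)\otimes e_3$, and the $\theta$-terms --- third- and fourth-order Riesz transforms of a discontinuous function --- are shown to be bounded pointwise only through the Gamblin--Saint-Raymond expansion $\partial_j\partial_k g=\Delta(a_{j,k}g)+\sum\partial_l\partial_\sigma(b^{l,i}_{j,k}W^i_\sigma g)$ of Lemmas \ref{lem:stra-exp}--\ref{lem:par-jk-Cgam}, which converts two full derivatives into one tangential derivative. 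This device (or an equivalent contour-dynamics cancellation) is indispensable for cases (2) and (3) and is absent from your plan; by contrast, working with the vorticity rather than $(v,\nabla p)$ also removes the $[\partial_X,\mathbb{P}]$ and $\partial_X\nabla p$ commutators you flag as the main obstacle. Your numerology for the exponent $\gamma<\frac{2p}{p+3}$ is consistent with the paper's, though there it arises from the embedding $\widetilde{L}^r_T(B^{2/r}_{p,\infty})\hookrightarrow L^r_T(B^1_{\infty,1})$ for the $\Gamma$-equation rather than from a heat-kernel time singularity of $v$ itself.
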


Our main theorem clearly covers the cases  $C^{1,\gamma}$, $W^{2,\infty}$ as well as  $C^{2,\gamma}$
which have been obtained in \cite{GGJ20}. Beside allowing more general temperature front initial data \eqref{eq.th0}
than \cite{GGJ20} we are proving the persistence of a bigger class of regular temperature patch.

The existence and uniqueness part of Theorem \ref{thm:exi-reg}
is mainly based on \textit{a priori} estimates for the velocity $v$ in $\dot H^{\frac{1}{2}}$.  The estimate in $\dot H^{\frac{1}{2}}$
will be useful to prove higher regularity results (see {\it{e.g.}} Lemma \ref{lem:L2Linf}) and in particular the persistence results in Theorem \ref{thm:exi-reg}.
Note that the uniqueness issue is not that standard. This is mainly due to the fact that the velocity field $v$ may not be in $L^1_T(Lip)$. In order to avoid this difficulty, to prove the uniqueness we rather work in the Chemin-Lerner space in time and homogeneous Besov in space, that is  $\widetilde{L}^1_T(\dot B^{1+\frac{3}{q}}_{q,\infty})$
together with the use of Lemma \ref{lem:uniq} (which may be viewed as a slight modification of the uniqueness result in \cite{DanPa08}).
It is worth mentioning that the global existence is already proved by Danchin and Pa\"icu \cite{DanPa08}.

Based on the obtained existence and uniqueness result in critical spaces, the remaining part of Theorem \ref{thm:exi-reg} deals with the regularity persistence results of temperature front.
The main goal is to measure the regularity of an initial temperature front $\partial D_0$  evolving according to the Boussinesq sytem.
The geometric quantity that one has to study is therefore $\partial D(t) = \psi_t(\partial D_0)$ where $\psi_t$ stands for the particule trajectory which is given by the ODE \eqref{eq:flowmap}. The latter quantity is strongly related to the striated regularity of the system $\mathcal{W}(t)= \{W^i(t)\}_{1\leq i\leq 5}$ (see Lemma \ref{lem:sr-cond}), where $\mathcal{W}(t) $ is the system of evolved tangential divergence-free vector fields satisfying \eqref{eq:Wi}. We want to emphasize that, compared to \cite{GGJ20}, we introduce a new quantity $\Gamma: = \Omega - (\mathcal{R}_{-1,2}\theta, - \mathcal{R}_{-1,1}\theta,0)^t$, which satisfies the equation
\begin{equation}\label{eq.Gamm0}
  \partial_t \Gamma + v\cdot\nabla \Gamma -\Delta \Gamma =\Omega\cdot \nabla v+ \big([\mathcal{R}_{-1,2}, v\cdot\nabla]\theta,  -[\mathcal{R}_{-1,1}, v\cdot\nabla]\theta,0\big)^t
\end{equation}
with $\mathcal{R}_{-1,j} := \partial_j \Lambda^{-2}$, $\Lambda=(-\Delta)^{\frac12}$, $j=1,2$,
and $\Omega=\nabla\wedge v$ the vorticity field. Such a good unknown $\Gamma$ plays a central role in the proof of the persistence of the regularity in the class $C^{1,\gamma}$, $W^{2,\infty}$ and in several striated estimates for the velocity $v$ in the sequel.

In order to show the $C^{1,\gamma}$ and $ W^{2,\infty}$  persistence of regularity of the patch,
according to Lemma \ref{lem:flow},
it suffices to show that the velocity field $v$ belongs to $L^1_T(C^{1,\gamma})$ and $L^1_T(W^{2,\infty})$ respectively.
{\it{Via}} the Biot-Savart law, we may decompose $\nabla v$ into the sum of controlled terms as follows
\begin{align}\label{eq.v.GamThe0}
  \nabla v = (-\Delta)^{-1}\nabla\nabla \wedge \Omega = \underbrace{\Lambda^{-2}\nabla\nabla\wedge \Gamma}_{under \ control} + \nabla^2\partial_3  \Lambda^{-4} \theta
  + \Lambda^{-2}\nabla \theta\otimes e_3 .
\end{align}
By exploiting the commutator estimate and making use of the smoothing effect, the quantity $\Gamma$
indeed can be well controlled.
Since $\theta$ belongs to $L^1\cap L^\infty$ uniformly in time,
one can directly prove that $\nabla^2\partial_3  \Lambda^{-4} \theta$ and $\Lambda^{-2} \nabla\theta $
belong to $L^1_T(C^\gamma)$ for all $\gamma\in (0, 1)$,
which then ensures the wanted $L^1_T(C^{1,\gamma})$ estimate of $v$. 
Then, in order to control the quantity $\|\nabla^2v\|_{L^1_T(L^\infty)}$,
it suffices to prove that $\nabla^3\partial_3  \Lambda^{-4} \theta$ and $\nabla^2\Lambda^{-2} \theta e_3$  belong to $L^\infty_T(L^\infty)$.

It is worth mentioning that the situation is quite analogous with that in the vorticity patch problem of 2D Euler equations
(where one needs to control $L^1_T(L^\infty)$ of $\nabla v = \nabla \nabla^\perp (-\Delta)^{-1} \mathds{1}_{D(t)}$),
we here adopt the method of striated estimates initiated by Chemin \cite{Chem88,Chem91} and its generalization to 3D Euler by \cite{GSR95}.
Although $\nabla^3 \partial_3 \Lambda^{-4}$ is a fourth-order Riesz transform, by using Lemmas \ref{lem:stra-exp} and \ref{lem:par-jk-Cgam},
we can conclude that $\nabla^3 \partial_3 \Lambda^{-4} \theta$ and moreover $\nabla^2 v$ belong to $L^1([0,T], L^\infty)$ as desired.

To prove the persistence of the $C^{2,\gamma}$ regularity of the patch, instead of using the complicated contour dynamics method as in \cite{GGJ20},
we essentially consider the regularity of a series of evolved tangential vector fields $\mathcal{W}(t)=\{W^i(t)\}_{1\leq i\leq 5}$ for the temperature front.
According to Lemma \ref{lem:sr-cond}, it suffices to show $\mathcal{W}\in L^\infty_T(C^{1,\gamma})$.
In estimating the $C^{\gamma}$  norm  of $\nabla\mathcal{W}$,  we mainly need to treat the striated term $\partial_{\mathcal{W}}\nabla v:=\mathcal{W}\cdot\nabla^2 v$
in $L^1_T (C^\gamma)$. By using \eqref{eq.v.GamThe0}, we treat the $\Gamma$ term and $\theta$ term separately,
and together with striated estimates in Lemma \ref{lem:str-es1} and commutator estimate in Lemma \ref{lem:comm-es}, 
we can give the striated estimate $\|\partial_{\mathcal{W}}\nabla v\|_{L^1_T (C^\gamma)}$.
Then, Gr\"onwall's inequality ensures the desired control in $L^1_T(C^{1,\gamma})$  of $\mathcal{W}$.

In order to obtain the regularity persistence in $C^{k,\gamma}$, with $k\geq 3$, according to Lemma \ref{lem:sr-cond},
we only need to show the striated regularity $\partial^{k-1}_\mathcal{W}\mathcal{W}\in L^\infty_T(C^\gamma)$. To do so, we use the induction method. Moreover, the function spaces $\bb^{s,\ell}_{r,\mathcal{W}}$ (see Definition \ref{def:Besov-str}) and the high-order striated estimates established in Lemma \ref{lem:prod-es2} will be essential throughout the proof of the main theorem.

Suppose that we already have controlled the quantities $\mathcal{W},\nabla v$ and $\Gamma$ in some well-chosen striated
spaces $\bb^{s,\ell}_{r,\mathcal{W}}$ with $\ell\in\{1,..., k-2\}$ (see \eqref{assup.el} below),
we aim at showing the corresponding estimates by replacing $\ell$ with  $\ell+1$.

To control of  the $C^{-1,\gamma}$ norm of $\partial^\ell _\mathcal{W}\nabla^2\mathcal{W}$, one needs to estimate the quantity
$\partial^{\ell+1}_\mathcal{W}\nabla^2v$ in $L^1_T (C^{-1,\gamma})$ and other (low order) striated estimates. In view of \eqref{eq.v.GamThe0} again, we treat the term in $\Gamma$ and the term in $\theta$ separately. More precisely, by making use of the smoothing estimate of transport-diffusion equation and the induction assumptions,
the $L^1_T (C^{-1,\gamma} )$ norm of $\partial^{\ell+1}_\mathcal{W}\nabla^2\Lambda^{-2}\nabla\wedge \Gamma$ can be controlled in terms of
$\Gamma$ itself and $\mathcal{W}$ in some $\bb^{s,\ell}_{r,\mathcal{W}}$ norms.
Then, by using Lemma \ref{lem:str-reg} which deals with the striated regularity of $\theta$, we can
also control the  $L^1_T (C^{-1,\gamma})$ norms of $\partial^{\ell+1}_\mathcal{W}\nabla^3\partial_3 \Lambda^{-4} \theta$ and
$\partial^{\ell+1}_\mathcal{W}\nabla^2\Lambda^{-2} \theta$.

Collecting all these estimates and applying Gr\"onwall inequality we get  the wanted estimates at the rank $\ell+1$,
so that the induction process guarantees the desired estimate \eqref{eq:Targ7}, which implies the  $L^\infty_T(C^\gamma)$ regularity of $\partial_\mathcal{W}^{k-1}\mathcal{W}$.

\vskip1.5mm
The paper is organized as follows. In Section \ref{sec:pril},
we introduce the notion of admissible conormal system adapted to the temperature patch and the definition of special type of Besov spaces $\bb^{s,\ell}_{p,r,\mathcal{W}}$
which are known in the literature as striated Besov spaces.
In the latter section, we also include a series of useful results on estimates in the striated setting, and we also present some auxiliary lemmas.
Then, in Section \ref{sec:exi-uni} we give a detailled proof of the existence and uniqueness part in Theorem \ref{thm:exi-reg}.
The sections \ref{sec:C1gam} and \ref{sec:Ck-gam} are dedicated to the proof of the regularity persistence results stated in Theorem \ref{thm:exi-reg}, namely,
we prove that the regularity of the patch in the space $C^{1,\gamma}$, $W^{2,\infty}$, $C^{2,\gamma}$ 
are preserved along the evolution of the temperature front in Section \ref{sec:C1gam}. The persistence in the $C^{k,\gamma}$  regularity  for any $k\geq 3$ in presented in Section \ref{sec:Ck-gam}.
Finally, we present a detailed proof of some technical lemmas in the appendix, namely, Lemmas \ref{lem:parW-mDf} and \ref{lem:L2Linf}.

\section{Preliminaries and auxiliary lemmas}\label{sec:pril}

\subsection{The admissible conormal vector system}

In order to obtain the (higher-order) regularity persistence of patches,  we need to introduce the striated (i.e. conormal) vector fields.
 
Let us first present the definition of an admissible system (see \cite{GSR95}).
\begin{definition}\label{definition-W}
A system $\mathcal W = (W^1 , W^2 ,..., W^N )$ of $N$ continuous
vector fields is said to be admissible if the function
\begin{align*}
  [\mathcal W]^{-1}\stackrel{\mathrm{def}}{=}\Big(\frac 2{N(N-1)}\sum_{\mu<\nu}|W^\mu\wedge W^\nu|^2\Big)^{-\frac {1}{4}} < \infty,
\end{align*}
where the wedge product $X\wedge Y$ is defined as
$X \wedge Y= (X_2Y_3-X_3Y_2,X_3Y_1-X_1Y_3,X_1Y_2-X_2Y_1)^t $ for any two vector fields $X = (X_1 ,X_2 ,X_3 )^t$ and
$Y = (Y_1 ,Y_2 ,Y_3)^t$.
\end{definition}

Now let $D_0\subset \R^3$ be a bounded simply connected domain with $\partial D_0\in C^{k,\gamma}$ with $k\geq 1$, $0<\gamma<1$.
Then there exists a function $F\in C^{k,\gamma}$ such that $F\equiv 0$ on $\partial D_0$ and $\nabla F|_{\partial D_0} \neq 0$.
The following result deals with the existence of an admissible system of divergence-free tangential vector fields of $\partial D_0$.
\begin{proposition}\label{prop:5vec}
  For any two-dimensional compact $C^{k,\gamma}$-submanifold $\partial D_0$ of $\R^3$,
we can find an admissible system consisting of five divergence-free and $C^{k-1,\gamma}$-vector fields tangent to $\partial D_0$.
\end{proposition}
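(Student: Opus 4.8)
The plan is to build the system by hand from a defining function of the surface and then to repair its unavoidable degeneracies away from the boundary with two extra fields, which is precisely what forces the number five. First I would fix the $C^{k,\gamma}$ defining function $F$ of $\Sigma:=\partial D_0$ with $F\equiv 0$ on $\Sigma$ and $\nabla F|_\Sigma\neq 0$, and set
\[
  W^j := \nabla\wedge(F\,e_j) = \nabla F\wedge e_j,\qquad j=1,2,3 .
\]
Each $W^j$ is a curl, hence divergence-free; since $\nabla F\in C^{k-1,\gamma}$ and $e_j$ is constant we have $W^j\in C^{k-1,\gamma}$; and $W^j\cdot\nabla F=0$ everywhere, so each $W^j$ is tangent to the level set $\Sigma$. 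The clean computation is the vector identity $(a\wedge b)\wedge(a\wedge c)=\det[a\,b\,c]\,a$, which yields $W^\mu\wedge W^\nu=\det[\nabla F\,e_\mu\,e_\nu]\,\nabla F$ and hence $\sum_{\mu<\nu}|W^\mu\wedge W^\nu|^2=|\nabla F|^4$. Thus these three fields already form an admissible system on $\{\nabla F\neq 0\}$, in particular on a neighborhood of the compact surface $\Sigma$ where $|\nabla F|\geq m>0$.

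The only obstruction to \emph{global} admissibility is that $\nabla F$ must vanish somewhere off $\Sigma$ (interior critical points of $F$, and the behaviour at infinity), where all three $W^j$ degenerate simultaneously. Since the bad set $B:=\{|\nabla F|<m/2\}$ is closed and disjoint from $\Sigma$, I would introduce a smooth cutoff $\psi$ equal to $0$ on a neighborhood of $\Sigma$ and to $1$ outside a slightly larger neighborhood chosen inside $\{|\nabla F|\geq m/2\}$, so that $\nabla\psi$ is supported in a bounded shell on which $|\nabla F|\geq m/2$, while $\psi\equiv 1$ on all of $B$. Then I set
\[
  Z^1:=\nabla\wedge(\psi\,x_1\,e_2),\qquad Z^2:=\nabla\wedge(\psi\,x_2\,e_3).
\]
These are again curls (divergence-free), smooth with bounded norm hence $C^{k-1,\gamma}(\R^3)$ (the factor $x_i\nabla\psi$ being compactly supported), and they vanish where $\psi\equiv 0$, thus on a neighborhood of $\Sigma$, so they are trivially tangent to $\Sigma$. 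Expanding $Z^i=(x_i\nabla\psi+\psi\,e_i)\wedge e_j$ shows that on $\{\psi\equiv 1\}$ one has $Z^1=e_1\wedge e_2=e_3$ and $Z^2=e_2\wedge e_3=e_1$, so that $Z^1\wedge Z^2=e_2$ and $|Z^1\wedge Z^2|=1$ throughout $B$.

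Taking $\mathcal W=(W^1,W^2,W^3,Z^1,Z^2)$ I would finally verify the uniform lower bound pointwise on $\R^3$: at any point either $|\nabla F|\geq m/2$, so the $W$-wedges alone give $\sum_{\mu<\nu}|W^\mu\wedge W^\nu|^2\geq|\nabla F|^4\geq(m/2)^4$, or the point lies in $B$, where the single term $|Z^1\wedge Z^2|^2=1$ already suffices. Hence
\[
  \sum_{\mu<\nu}|W^\mu\wedge W^\nu|^2\geq \min\{(m/2)^4,\,1\}>0
\]
everywhere, so $[\mathcal W]^{-1}\in L^\infty$ and the system is admissible, while all five fields are divergence-free, of class $C^{k-1,\gamma}$, and tangent to $\partial D_0$. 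The step I expect to be the genuine obstacle is not the algebra but arranging this global, uniform non-degeneracy: $\nabla F$ necessarily vanishes off $\Sigma$, and any divergence-free field produced by a naive cutoff tends to vanish in the interior of its support. The device $\nabla\wedge(\psi\,x_i\,e_j)$, which is locally constant and nonzero exactly where $\psi$ is constant, is what resolves this and explains why two auxiliary fields—five in all—are both necessary and sufficient to cover the complementary regions while preserving tangency and the divergence-free structure.
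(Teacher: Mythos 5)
Your construction is essentially the paper's own proof: the three fields $\nabla F\wedge e_j$ tangent to the level set, repaired off a tubular neighborhood of $\partial D_0$ by two cutoff fields of the form $\nabla(\chi\, x_i)\wedge e_j$ whose wedge has unit length where the cutoff is constant, with admissibility checked by the same dichotomy. Your computation $\sum_{\mu<\nu}|W^\mu\wedge W^\nu|^2=|\nabla F|^4$ is in fact the correct one (the paper writes $|\nabla F|^2$, an inconsequential typo), so the argument is complete and matches the paper's route.
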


\begin{proof}[Proof of Proposition \ref{prop:5vec}]
  The proof follows an idea from Gamblin and Saint-Raymond (see \cite{GSR95}), we include it for the sake of completeness.
By continuity, there exists some $\epsilon >0$ such that $\nabla F \neq 0$ on $\Sigma_\epsilon = \{x\in \R^3: \mathrm{dist}(x,\partial D_0)\leq \epsilon \}$.
Let $\chi\in C^\infty$ be a bump function so that $\chi =1$ on $\R^3\setminus \Sigma_\epsilon$ and $\chi =0$ on $\Sigma_{\epsilon/2}$. Set
\begin{eqnarray} \label{eq:Wi0}
   W^1_0 &=& (0,-\partial_3 F,\partial_2 F)^t, \nonumber \\
   W^2_0 &=&  (\partial_3 F,0, -\partial_1 F)^t, \nonumber \\
   W^3_0 &=& (-\partial_2 F, \partial_1 F, 0)^t, \nonumber \\
   W^4_0 &=& (\partial_3(\chi x_3),0, - \partial_1 (\chi x_3))^t, \nonumber\\
   W^5_0 &=& (- \partial_2(\chi x_1), \partial_1(\chi x_1),0)^t. \nonumber
\end{eqnarray}
Clearly, $\{W^1_0, \cdots, W^5_0\}$ is composed of $C^{k-1,\gamma}$ divergence-free vector fields that are all tangent to $\Sigma$.
Moreover, this system is admissible because of the fact that $|W^4_0\wedge W^5_0|\equiv 1$ on $\R^3\setminus \Sigma_\epsilon$
and $|W^1_0\wedge W^2_0|^2 + |W^1_0\wedge W^3_0|^2 + |W^2_0\wedge W^3_0|^2 = |\nabla F|^2\geq \delta >0$ on $\Sigma_\epsilon$.
\end{proof}

Consider an initial temperature patch $\theta_0$ which is non-constant and satisfies \eqref{eq.th0}.
Since the patch boundary $\partial D_0$ is a
two-dimensional compact submanifold of  $C^{k,\gamma}(\R^3)$ regularity, where $k\ge1$ and $\gamma\in (0,1)$,
thanks to Proposition \ref{prop:5vec}, we can find an admissible system $\mathcal W_0 = \{W^1_0, \cdots, W^5_0\}$
such that
\begin{equation}\label{W-i-0}
\begin{split}
  \divg W^i_0=0 \,\,\mathrm{and}\,\, W^i_0\in C^{k-1,\gamma}(\R^3)\, \mathrm{are \,\,tangent\,\, to \,\,}\partial D_0,  \quad i = 1,\cdots,5.
\end{split}
\end{equation}

As in the vorticity patch problem for the Euler equation (see {\it{e.g.}} \cite{Chem88,Chem91,GSR95}),
we consider the evolution of the vectors $\mathcal{W}(t) = \{W^1(t), \cdots, W^5(t)\}$ where $W^i(t)$ is a solution of

\begin{equation}\label{eq:Wi}
  \partial_t W^i + v \cdot\nabla W^i = W^i\cdot\nabla v = \partial_{W^i}v, \quad\quad W^i|_{t=0}(x)=W^i_0(x),
\end{equation}
for all $i=1,\cdots,5$ and all initial data $W^i_0$ verifying \eqref{eq:Wi0}.
Since $\divg W^i$ satisfies the transport equation
\begin{equation}\label{eq.divX}
  \partial_t(\divg W^i) + v\cdot\nabla (\divg W^i)=0,\qquad \divg W^i|_{t=0}=\divg W^i_0 =0,
\end{equation}
we see that $W^i(t,x)$ is still divergence-free.
According to \cite{MB02}, we also have
\begin{align}\label{exp:Wit}
  W^i(t,x) = (\partial_{W^i_0}\psi_t)\big(\psi_t^{-1}(x)\big),\quad i=1,\cdots,5,
\end{align}
where $\psi_t:\R^3\rightarrow \R^3$ is the particle-trajectory map (that is a solution to the ODE \eqref{eq:flowmap}) with inverse $\psi_t^{-1}$.

The following result is fundamental as it shows the deep relationship between the (higher-order) boundary regularity of $\partial D(t)$
with the striated regularity of the system $\mathcal{W}(t)=\{W^i(t)\}_{1\leq i\leq 5}$. 
\begin{lemma}\label{lem:sr-cond}
 Let $T>0$,  $k\ge 2$ and $\gamma\in(0,1)$.
Let $\psi_t(\cdot): \R^3\rightarrow \R^3$ defined by \eqref{eq:flowmap} be the measure-preserving bi-Lipschitz particle-trajectory map on $[0,T]$.
Then, the temperature patch boundary $\partial D(t)=\psi_t( \partial D_0)$ preserves its $C^{k,\gamma}(\R^3)$ regularity on the time interval $[0,T]$, provided that
\begin{align}\label{eq:targ-sr}
  \textrm{$\partial^\ell_{\mathcal{W}} \mathcal{W} \in L^\infty([0,T], C^\gamma(\R^3))$ \quad for all \quad  $0\leq \ell\leq k-1$.}
\end{align}
\end{lemma}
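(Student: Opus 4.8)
The plan is to turn the global statement about $\partial D(t)=\psi_t(\partial D_0)$ into a local graph representation and to recover the regularity of that graph from the conormal (tangential) regularity of $\mathcal W(t)$ recorded in \eqref{eq:targ-sr}. The two structural facts I would use from the outset are that the evolved fields $W^i(t)$ in \eqref{exp:Wit} remain divergence-free and tangent to $\partial D(t)$ (being the push-forwards $(\psi_t)_*W^i_0$ of fields tangent to $\partial D_0$), and that $\mathcal W(t)$ stays admissible in the sense of Definition \ref{definition-W}. The latter follows from \eqref{exp:Wit}: writing $W^i(t,x)=D\psi_t(y)\,W^i_0(y)$ with $y=\psi_t^{-1}(x)$ and using $\det D\psi_t\equiv 1$, the wedge products transform by the inverse transpose, $W^\mu(t)\wedge W^\nu(t)=(D\psi_t)^{-T}(W^\mu_0\wedge W^\nu_0)\circ\psi_t^{-1}$, which is bounded with bounded inverse by the bi-Lipschitz property; hence $[\mathcal W(t)]^{-1}$ stays finite, uniformly on $[0,T]$.

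Next I would localize. Fix $t$ and a point $p\in\partial D(t)$; by admissibility some pair $(\mu,\nu)$ satisfies $|W^\mu(t)\wedge W^\nu(t)|(p)>0$, and since the $\ell=0$ case of \eqref{eq:targ-sr} gives $W^i\in C^\gamma$, this persists on a neighborhood $U$ of $p$. There $W^\mu(t),W^\nu(t)$ are two independent tangent fields, so $W^\mu(t)\wedge W^\nu(t)$ is normal to $\partial D(t)$; as all five fields are tangent, every nonvanishing wedge lies along one line and the unit normal $n:=\frac{W^\mu(t)\wedge W^\nu(t)}{|W^\mu(t)\wedge W^\nu(t)|}$ is well defined and, by admissibility, bounded away from degeneracy. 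After a rotation I may write $\partial D(t)\cap U$ as a graph $x_3=\phi(x_1,x_2)$. Since a hypersurface is $C^{k,\gamma}$ precisely when it is locally such a graph with $\phi\in C^{k,\gamma}$, and since $\nabla\phi$ is a smooth function of $n$ (the graph normal being $(-\nabla\phi,1)/\sqrt{1+|\nabla\phi|^2}$), the whole statement reduces to proving $n\in C^{k-1,\gamma}$ as a function on the surface, i.e.\ in the chart $(x_1,x_2)$.

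The heart of the argument is then an induction on $k$ showing that \eqref{eq:targ-sr} forces the restrictions $W^i|_{\partial D(t)}$, and hence $n$, to be $C^{k-1,\gamma}$ in the chart. In the base case ($k=1$, underlying the $C^{1,\gamma}$ claim), $W^i\in C^\gamma$ and the lower bound on $|W^\mu\wedge W^\nu|$ give $n\in C^\gamma$, so $\phi\in C^{1,\gamma}$. For the inductive step I would assume that the hypotheses up to order $\ell=k-2$ already yield $\partial D(t)\in C^{k-1,\gamma}$; then the chart is $C^{k-1,\gamma}$ and the pulled-back tangent fields $\widetilde W^\mu,\widetilde W^\nu$ form an independent pair of planar vector fields with $C^{k-2,\gamma}$ coefficients spanning the tangent space. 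The extra hypothesis $\partial^{k-1}_{\mathcal W}\mathcal W\in C^\gamma$ says exactly that all iterated directional derivatives of the $W^i$ along $\mathcal W$ up to order $k-1$ lie in $C^\gamma$, and restricting these to the surface (where $\partial_{\mathcal W}$ is the intrinsic tangential derivative) gives $C^\gamma(\partial D(t))$ control of the tangential derivatives of $W^i|_{\partial D(t)}$ up to order $k-1$. Expanding $\partial^\ell_{\mathcal W}(W^\mu\wedge W^\nu)$ by Leibniz, together with the quotient rule and the lower bound on $|W^\mu\wedge W^\nu|$, transfers this to $C^\gamma$ control of the tangential derivatives of $n$. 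Feeding this into the regularity principle ``if a spanning pair of Hölder vector fields has all its iterated derivatives of a function lying in $C^\gamma$, that function is $C^{k-1,\gamma}$'' upgrades $n$ from $C^{k-2,\gamma}$ to $C^{k-1,\gamma}$ in the chart, hence $\phi$ to $C^{k,\gamma}$. A finite cover of the compact surface and the uniform bounds from $\|\partial^\ell_{\mathcal W}\mathcal W\|_{L^\infty_T C^\gamma}$ and $[\mathcal W(t)]^{-1}\in L^\infty_T$ then yield $\partial D(t)\in L^\infty([0,T],C^{k,\gamma})$.

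The main obstacle I anticipate is precisely this last mechanism: converting striated (tangential) Hölder regularity of the non-unit, non-orthogonal, a priori only $C^\gamma$-ambient fields $W^i$ into genuine $C^{k,\gamma}$ regularity of the local graph. The difficulty is that $\partial_{\mathcal W}$ is a first-order operator whose own coefficients are no smoother than the $W^i$, so the gain of regularity cannot be read off directly and must be organized inductively, each step simultaneously improving the regularity of the chart, of the coefficient fields $\widetilde W$, and of $n$. Keeping the constants uniform in $t$ and patching the local charts into a single global statement about $\partial D(t)$ is the part demanding care, rather than any genuinely new idea.
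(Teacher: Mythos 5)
Your proposal reaches the crux and then leaves it as a black box. The entire content of the lemma is the passage from striated regularity (iterated directional derivatives along the merely H\"older, non-coordinate fields $W^i$ lying in $C^\gamma$) to genuine $C^{k,\gamma}$ regularity of the surface, and your argument delegates exactly this step to an unproven ``regularity principle'' (``if a spanning pair of H\"older vector fields has all its iterated derivatives of a function lying in $C^\gamma$, that function is $C^{k-1,\gamma}$''). As stated this principle is not obviously true and is at least as hard as the lemma itself: the operator $\partial_{\mathcal W}$ has coefficients no smoother than $\mathcal W$, the quantities $\partial^\ell_{\mathcal W}\mathcal W$ are a priori only distributional (via $\divg(f\,W^i)$), and inverting the spanning relation at each order requires the coefficients $\mu_{jk}$ relating $\partial_{s_i}$ to $\partial_{W^{i_1}},\partial_{W^{i_2}}$ in the time-$t$ chart to already have one more derivative than you have established at that stage of the induction. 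You correctly flag this as ``the main obstacle,'' but flagging it is not resolving it, so the proof is incomplete where it matters.

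The paper's proof avoids this obstacle entirely by never working with the implicit representation (graph plus normal) at time $t$. It parametrizes $\partial D(t)$ by $\psi_t\circ\varphi_\beta$ with $\varphi_\beta$ a fixed $C^{k,\gamma}$ chart of $\partial D_0$, writes $\partial_{s_i}(\psi_t\circ\varphi_\beta)=(\partial_{Y_i}\psi_t)\circ\varphi_\beta$ with $Y_i=\partial_{s_i}\varphi_\beta$ expressed as $C^{k-1,\gamma}$-combinations of the \emph{initial} fields $W^{i_1}_0,W^{i_2}_0$ (using admissibility of $\mathcal W_0$ only), and then exploits the exact identity
\begin{equation*}
  \partial_{W^j_0}^{k_1}\,\partial_{W^i_0}^{k_2+1}\,\psi_t(x)
  =\big(\partial_{W^j}^{k_1}\partial_{W^i}^{k_2}W^i\big)(t,\psi_t(x)),
\end{equation*}
which identifies the iterated derivatives of the parametrization with the striated derivatives $\partial^\ell_{\mathcal W}\mathcal W$ composed with the bi-Lipschitz map $\psi_t$. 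The hypothesis \eqref{eq:targ-sr} then gives the $C^\gamma$ bound on the top-order derivatives of the chart directly, with no need to upgrade regularity of a normal vector field, no bootstrapping between the chart and the coefficient fields, and no admissibility of $\mathcal W(t)$ for $t>0$. If you want to salvage your route, you would have to prove your regularity principle by essentially reconstructing this identity (or the paradifferential substitute from Chemin's and Gamblin--Saint-Raymond's work); as written, the proposal has a genuine gap.
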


\begin{proof}[Proof of Lemma \ref{lem:sr-cond}]
  To prove this lemma, we follow the same approach as \cite{GSR95,LL19}. More precisely, we notice that
  since $D_0$ is a simply-connected domain with its boundary $\partial D_0\in C^{k,\gamma}(\R^3)$, according to the finite covering theorem,
there exists a finite number of charts $\{V_\beta, \varphi_\beta\}_{1\leq \beta\leq m}$ covering the two-dimensional compact
$C^{k,\gamma}$-submanifold $\partial D_0$ with
\begin{equation*}
\begin{split}
  \varphi_\beta:\quad  U_\beta&\longmapsto V_\beta \subset \R^3, \\
  (s_1,s_2)&\longmapsto \varphi_\beta(s_1,s_2)= (\varphi_\beta^1,\varphi_\beta^2,\varphi_\beta^3)(s_1,s_2)\in V_\beta,
\end{split}
\end{equation*}
where $U_\beta$ is an open set of $\R^2$, $V_\beta$ is an open set of $\R^3$ near neighborhood of $\partial D_0$,
$\varphi_\beta\in C^{k,\gamma}$, $\beta=1,\cdots,m$.

In order to show that $\partial D(t)=\psi_t(\partial D_0)\in C^{k,\gamma}$, it suffices to prove
\begin{align}\label{eq:targ1}
  \partial^{k_1}_{s_1} \partial^{k_2}_{s_2}( \psi_t(\varphi_\beta(s_1,s_2)))\in L^\infty_T(C^\gamma(U_\beta)),\ \text{for all} \  0\leq k_1+k_2\leq k.
\end{align}
Define the tangential vector fields $Y_i(\varphi_\beta(s_1,s_2)) : = \partial_{s_i} \varphi_\beta(s_1,s_2)$ for $i=1,2$, then $Y_i \in C^{k-1,\gamma}(V_\beta)$, and by the chain rule we get that
\begin{equation*}
\begin{split}
  \partial_{s_i} (\psi_t(\varphi_\beta(s_1,s_2))) & = \partial_{s_i} \varphi_\beta(s_1,s_2) \cdot \nabla \psi_t(\varphi_\beta(s_1,s_2)) \\
  & =  Y_i(\varphi_\beta(s_1,s_2)) \cdot \nabla \psi_t(\varphi_\beta(s_1,s_2)) = (\partial_{Y_i} \psi_t)(\varphi_\beta(s_1,s_2)), \quad i=1,2.
\end{split}
\end{equation*}
By induction we see that
\begin{align*}
  \partial^{k_1}_{s_1} \partial^{k_2}_{s_2}( \psi_t(\varphi_\beta(s_1,s_2)))=
  (\partial^{k_1}_{Y_1} \partial^{k_2}_{Y_2} \psi_t)(\varphi_\beta(s_1,s_2)),
\end{align*}
Then, using the fact that $\varphi_\beta\in C^{k,\gamma}$,  it suffices to prove that
\begin{align}\label{eq:targ2}
  (\partial^{k_1}_{Y_1} \partial^{k_2}_{Y_2} \psi_t)(\cdot) \in L^\infty_T(C^\gamma(V_\beta)), \ \text{for all} \ 0\leq k_1+k_2\leq k.
\end{align}

Since $\mathcal{W}_0 =\{W^i_0\}_{1\leq i\leq 5}$ is an admissible system (see Definition \ref{definition-W}),
for $(s_1,s_2)\in U_\beta$, without loss of generality we may assume that for $i_1\neq i_2\in\{1,\cdots,5\}$,
\begin{equation*}
\begin{split}
  |W^{i_1}_0 \wedge W^{i_2}_0|(\varphi_\beta(s_1,s_2))>0.
\end{split}
\end{equation*}
Then, the fact that $W^i_0\in C^{k-1,\gamma}$ guarantees that there exists an open set $\widetilde{V}_\beta\subset V_\beta$ such that
\begin{equation}\label{base-condition}
\begin{split}
  \varphi_\beta(s_1,s_2)\in \widetilde{V}_\beta,  \quad \inf_{x\in \widetilde{V}_\beta}|W^{i_1}_0\wedge W^{i_2}_0|(x)>0.
\end{split}
\end{equation}
This means that $\{W^{i_1}_0, W^{i_2}_0\}$ can be seen as the base of the tangent vector fields of $\partial D_0$ on $\widetilde{V}_\beta$.
Hence, $Y_1, Y_2$ on $\widetilde{V}_\beta$ can be expressed as a linear combination of $W^{i_1}_0$ and $W^{i_2}_0$:
\begin{equation}\label{exp:Y1Y2}
\begin{split}
  Y_1=\mu_{11}W^{i_1}_0+\mu_{12}W^{i_2}_0,\quad Y_2=\mu_{21}W^{i_1}_0+\mu_{22}W^{i_2}_0,
\end{split}
\end{equation}
where the coefficients are determined by
\begin{equation*}
\begin{split}
  \mu_{jk}= \frac{\langle Y_j,W^{i_k}_0 \rangle
  |W^{i_{\bar k}}_0|^2 - \langle Y_j,W^{i_{\bar k}}_0 \rangle \langle W^{i_1}_0,W^{i_2}_0 \rangle}{|W^{i_1}_0\wedge W^{i_2}_0|^2},
  \quad \text{for} \quad j,k=1,2 \quad \text{and} \quad (k,\bar{k})\in\{(1,2),(2,1)\},
\end{split}
\end{equation*}
where $\langle\cdot,\cdot\rangle$ denotes the inner product of $\R^3$.
Then, the fact  $W_0^i, Y_j \in C^{k-1,\gamma}(V_\beta)$ together with \eqref{base-condition},
 allows us to find that the coefficients $\mu_{ij} \in C^{k-1,\gamma}(\widetilde{V}_\beta)$, for all $k\ge 2$ and $i,j=1,2$.
This property combined with \eqref{exp:Y1Y2} imply that in order to show \eqref{eq:targ2}, it suffices to prove that for $i\neq j \in \{1,\cdots,5\}$ one has
\begin{equation}\label{eq:targ3}
\begin{split}
  \partial_{W^j_0}^{k_1} \partial_{W^i_0}^{k_2} \psi_t \in L^\infty_{T}(C^\gamma\big(\widetilde{V}_\beta)\big),\quad \forall \ 0\leq k_1 + k_2 \leq k .
\end{split}
\end{equation}

On the other hand, by using the identity \eqref{exp:Wit} and its equivalent form, that is, $W^i(t,\psi_t(x)) = \partial_{W^i_0}\psi_t (x) $ one finds, {\it{via}} a direct computation that for all $i,j\in \{1,\cdots,5\}$,
\begin{equation*}
\begin{split}
  \partial_{W^j_0}\partial_{W^i_0}\psi_t(x) = \partial_{W^j_0} (W^i(t,\psi_t(x))) & = \partial_{W^j_0}\psi_t(x) \cdot (\nabla W^i)(t,\psi_t(x))
  = (\partial_{W^j}W^i)(t,\psi_t(x)).
\end{split}
\end{equation*}
By induction, we see that for all $k_1,k_2\in \N$,
\begin{equation*}
\begin{split}
  \partial_{W^j_0}^{k_1} \partial_{W^i_0}^{k_2+1} \psi_t(x) = (\partial_{W^j}^{k_1} \partial_{W^i}^{k_2} W^i)(t,\psi_t(x)).
\end{split}
\end{equation*}
Hence in order to show \eqref{eq:targ3}, since $\psi_t\in L^\infty_T(W^{1,\infty}) \subset L^\infty_T(C^\gamma)$,
one only needs to prove that
\begin{align*}
  \partial_{W^j}^{k_1} \partial_{W^i}^{k_2} (W^i,W^j)(t,\psi_t(x)) \in L^\infty_T(C^\gamma(\widetilde{V}_\beta)), \quad \text{for all}  \ 0\leq k_1+k_2 \leq k-1,
\end{align*}
but the latter is a direct consequence  of
\begin{align}\label{eq:targ4}
  \partial_{W^j}^{k_1} \partial_{W^i}^{k_2} (W^i,W^j)(t,\cdot) \in L^\infty_T(C^\gamma(\R^3 )), \quad \text{for all} \ 0\leq k_1+k_2 \leq k-1.
\end{align}

Since obviously \eqref{eq:targ-sr} implies \eqref{eq:targ4} then the proof of Lemma \ref{lem:sr-cond} is done.
\end{proof}

The lemma below deals with the striated estimate of the initial temperature front.
\begin{lemma}\label{lem:str-reg}
  Let $k\geq 2$, and $0<\gamma<1$. Assume that $D_0 \subset \R^3$ is a bounded simply connected domain with boundary
$\partial D_0\in C^{k,\gamma}(\R^3)$,
and $\theta_0(x) = \theta^*_1(x) \mathds{1}_{D_0}(x) + \theta_2^*(x) \mathds{1}_{D_0^c}(x)$ satisfies $\theta^*_1 \in C^{k-2,\gamma}(\overline{D_0})$
and $\theta_2^* \in L^1 \cap C^{k-2,\gamma}(\overline{D_0^c})$.
Let $\mathcal{W}_0 = \{W^i_0\}_{1\leq i\leq 5}$ be defined as in \eqref{eq:Wi0}. Then for all $\ell\in \{1,2,\cdots, k-1\}$ we have
\begin{align}\label{eq:str-reg}
  \partial_{\mathcal{W}_0}^\ell \theta_0 \in C^{-1,\gamma}(\R^3).
\end{align}
Besides, if $\partial D_0\in C^{1,\gamma}$, and $\theta_0(x) = \theta^*_1(x) \mathds{1}_{D_0}(x) + \theta^*_2(x) \mathds{1}_{D_0^c}(x)$
with $\theta_1^* \in C^{\mu_1}(\overline{D_0})$
and $\theta_2^*\in L^1\cap C^{\mu_2}(\overline{D_0^c})$, $0<\mu_1,\mu_2<1$, we have
\begin{align}\label{eq:str-reg2}
  \partial_{\mathcal{W}_0} \theta_0 \in C^{-1,\mu}(\R^3),\quad \textrm{with\;\;\;} \mu=\min\{\mu_1,\mu_2\}.
\end{align}
\end{lemma}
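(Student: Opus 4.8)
The plan is to derive the higher-order statement \eqref{eq:str-reg} from the first-order one \eqref{eq:str-reg2} by peeling off tangential derivatives one at a time, and to prove \eqref{eq:str-reg2} from a single tangential-jump divergence estimate that isolates the only genuine difficulty. Two structural features of the system $\mathcal W_0$ drive everything: each $W^i_0$ is divergence-free, so that $\partial_{W^i_0}f = W^i_0\cdot\nabla f = \nabla\cdot(f\,W^i_0)$, and each $W^i_0$ is tangent to $\partial D_0$, so that $W^i_0\cdot n = 0$ on $\partial D_0$, where $n$ denotes the unit normal. Consequently, for a patch function $f = f_1\mathds 1_{D_0} + f_2\mathds 1_{D_0^c}$ whose pieces are differentiable up to the boundary, the distributional identity
\[
  \partial_{W^i_0} f = (W^i_0\cdot\nabla f_1)\mathds 1_{D_0} + (W^i_0\cdot\nabla f_2)\mathds 1_{D_0^c} + (f_1-f_2)\,(W^i_0\cdot n)\,\dd\sigma_{\partial D_0}
\]
has a vanishing surface term by tangency. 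Thus $\partial_{W^i_0}f$ retains the patch structure, with each piece losing exactly one order of regularity (recall $W^i_0\in C^{k-1,\gamma}$ is at least as smooth as the pieces).

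For \eqref{eq:str-reg} I would iterate this: when $k\ge 3$ and $1\le\ell\le k-1$, the first $\ell-1$ derivatives may be taken classically, since each lowers the regularity of the pieces by one and the condition $\ell\le k-1$ keeps them at least $C^{1,\gamma}$ up to the last step. This reduces the claim to applying one final tangential derivative to a patch function $g=g_1\mathds 1_{D_0}+g_2\mathds 1_{D_0^c}$ with $g_1,g_2\in C^{k-1-\ell,\gamma}\subseteq C^{0,\gamma}$; for $k=2$ only $\ell=1$ occurs and no peeling is needed. Hence both assertions follow once I establish the core estimate: if $g=g_1\mathds 1_{D_0}+g_2\mathds 1_{D_0^c}$ with $g_1\in C^{\alpha_1}(\overline{D_0})$ and $g_2\in L^1\cap C^{\alpha_2}(\overline{D_0^c})$, then $\partial_{W^i_0}g=\nabla\cdot(g\,W^i_0)\in C^{-1,\alpha}$ with $\alpha=\min\{\alpha_1,\alpha_2,\gamma\}$. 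In \eqref{eq:str-reg2} this gives $\min\{\mu_1,\mu_2,\gamma\}=\mu$ once $\gamma\ge\mu$, and in \eqref{eq:str-reg} the pieces are at least $C^{0,\gamma}$ and $n\in C^{k-1,\gamma}$, so $\alpha=\gamma$.

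To prove the core estimate I would extend $g_1,g_2$ to $g_1^e,g_2^e\in C^{\alpha_j}(\R^3)$ and split $g\,W^i_0=g_2^e W^i_0 + a\,\mathds 1_{D_0}$ with $a:=(g_1^e-g_2^e)W^i_0$, which is tangent ($a\cdot n=0$ on $\partial D_0$). The globally Hölder part $g_2^e W^i_0\in C^{\min\{\alpha_2,\gamma\}}$ is harmless since $\nabla\cdot$ maps $C^s\to C^{s-1}$. For the tangential-jump part I would estimate $\Delta_j\nabla\cdot(a\mathds 1_{D_0})(x)=\int(\nabla\varphi_j)(x-y)\cdot a(y)\mathds 1_{D_0}(y)\,\dd y$ (with $\varphi_j$ the convolution kernel of the Littlewood--Paley block $\Delta_j$) by splitting on $d(x)=\mathrm{dist}(x,\partial D_0)$. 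When $d(x)\gtrsim 2^{-j}$ the kernel sees a constant $\mathds 1_{D_0}$, and subtracting $a(x)$ and using $\int\nabla\varphi_j=0$ gives the bound $2^{j(1-\alpha)}\|a\|_{C^\alpha}$. When $d(x)\lesssim 2^{-j}$, let $x_0$ be the nearest boundary point and write $a(y)=a(x_0)+(a(y)-a(x_0))$: the increment contributes $\lesssim 2^{-j\alpha}\|\nabla\varphi_j\|_{L^1}\lesssim 2^{j(1-\alpha)}$, while the constant part, after the divergence theorem, becomes $a(x_0)\cdot\int_{\partial D_0}\varphi_j(x-y)n(y)\,\dd\sigma(y)=\int_{\partial D_0}\varphi_j(x-y)\,a(x_0)\cdot(n(y)-n(x_0))\,\dd\sigma(y)$ thanks to $a(x_0)\cdot n(x_0)=0$.

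The main obstacle is precisely this last surface integral. A crude bound on $a(x_0)\cdot\int_{D_0}\nabla\varphi_j$ only yields $2^j$, which is one power too many; beating it down to $2^{j(1-\gamma)}$ relies on the combination of the exact tangency $a\cdot n=0$ and the $C^\gamma$ regularity of the normal, which give $|a(x_0)\cdot(n(y)-n(x_0))|\lesssim 2^{-j\gamma}$ on the kernel support, together with the surface bound $\int_{\partial D_0}|\varphi_j(x-y)|\,\dd\sigma(y)\lesssim 2^j$ coming from $\|\varphi_j\|_{L^\infty}\sim2^{3j}$ and a $2^{-2j}$ surface-area factor. Collecting the cases yields $\|\Delta_j\nabla\cdot(a\mathds 1_{D_0})\|_{L^\infty}\lesssim 2^{j(1-\alpha)}$, i.e.\ membership in $C^{-1,\alpha}$, which completes the core estimate and hence the lemma. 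Everything outside this geometric cancellation is routine paraproduct and regularity bookkeeping.
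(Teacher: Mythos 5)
Your proposal is correct in its main mechanism but follows a genuinely different route from the paper. The paper's proof is much shorter and more ``soft'': it extends $\theta^*_1,\theta^*_2$ to global H\"older functions by Rychkov's theorem, uses the tangency of $\mathcal W_0$ to commute $\partial_{\mathcal{W}_0}^\ell$ with $\mathds 1_{D_0}$ (i.e.\ the same vanishing of the surface term you exploit), invokes the Runst--Sickel theorem that $\mathds 1_{D_0}$ is a pointwise multiplier on $C^{-1,\gamma}=B^{\gamma-1}_{\infty,\infty}$, and then only has to check $\partial_{\mathcal{W}_0}^\ell\widetilde\theta^*_i\in C^{-1,\gamma}$ via the product estimate \eqref{eq:prod-es}. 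You instead keep the derivative on the patch, reduce by classical peeling to a single distributional divergence $\nabla\cdot(a\,\mathds 1_{D_0})$ with $a$ tangent and H\"older, and prove the needed bound by hand through a Littlewood--Paley kernel estimate whose heart is the cancellation $a(x_0)\cdot n(x_0)=0$ against the $C^\gamma$ modulus of the normal. What your approach buys is self-containedness (no black-box multiplier theorem) and an explicit display of the geometric mechanism; what it costs is length and two technical debts: the blocks $\Delta_j$ have Schwartz, not compactly supported, kernels, so the near/far dichotomy in $d(x)$ versus $2^{-j}$ needs the usual rapid-decay tail bookkeeping (your divergence-theorem reduction of the ``constant part'' to a surface integral in fact handles all $x$ uniformly and is the cleanest way to close this).

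The one substantive caveat concerns the exponent in \eqref{eq:str-reg2}. Your surface-cancellation step produces the factor $\|n\|_{C^\gamma}\,2^{j(1-\gamma)}$, so your core estimate yields $C^{-1,\min\{\mu,\gamma\}}$, and you explicitly need $\gamma\ge\mu$ to recover the stated $C^{-1,\mu}$. The lemma imposes no such relation between $\gamma$ and $\mu_1,\mu_2$. The paper's route does not see this loss: after commutation no surface term is ever produced, and the multiplier property of $\mathds 1_{D_0}$ on $B^{s}_{\infty,\infty}$ for $-1<s<0$ only requires a Lipschitz boundary, so the conclusion $C^{-1,\mu}$ holds regardless of $\gamma$. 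In the applications inside the paper the boundary is at least $W^{2,\infty}$, so $\gamma$ may be taken close to $1$ and your weaker version suffices, but as a proof of the lemma as stated you should either add the hypothesis $\gamma\ge\mu$ or replace the hands-on surface estimate by the multiplier argument in the regime $\gamma<\mu$.
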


\begin{proof}[Proof of Lemma \ref{lem:str-reg}]
We start with the proof of \eqref{eq:str-reg}. We follow an idea of F. Sueur \cite{Sue15}.
More precisely,
by using Rychkov's extension theorem (\cite{Ryc99}),
there exist two functions $\widetilde{\theta}^*_{i}(\R^3) \in C^{k-2,\gamma}(\R^3), i=1,2$ such that
$\widetilde{\theta}^*_1|_{D_0}=\theta^*_1(x) $  and $\widetilde{\theta}^*_2|_{D^c_0}=\theta_2^*(x) $.
Hence $\theta_0=\widetilde{\theta}^*_1 \cdot \mathds{1}_{D_0} + \widetilde{\theta}^*_2 \cdot \mathds{1}_{D^c_0}$.
Therefore, it is sufficient to show that for all $1\leq \ell\leq k-1$, one has
\begin{align*}
  \partial_{\mathcal{W}_0}^\ell(\widetilde{\theta}^*_1\cdot \mathds{1}_{D_0}), \,
  \partial_{\mathcal{W}_0}^\ell(\widetilde{\theta}^*_2\cdot \mathds{1}_{D^c_0})\in {C^{-1,\gamma}}(\R^3).
\end{align*}
Since the divergence-free vector fields $W^j_0 \in {\mathcal{W}_0}$ $(1\leq j\leq 5)$
are tangential to the patch boundary $\partial D_0$,
the operator $\partial_{\mathcal{W}_0}^\ell$ commutes with the characteristic functions $\mathds{1}_{D_0}$ and $\mathds{1}_{D^c_0}$.
Moreover, since the characteristic functions
$\mathds{1}_{D_0}$ and $\mathds{1}_{D^c_0}$ are pointwise multipliers in the H\"older space ${C^{-1,\gamma}}(\R^3)$ (see {\it{e.g}} Theorems 1-2 in the book of Runst and Sickel \cite{RunSi96}).

Therefore,  one only needs to show $\partial_{\mathcal{W}_0}^\ell \widetilde{\theta}^*_i \in {C^{-1,\gamma}}(\R^3)$, $i=1,2$.
Using the product estimate \eqref{eq:prod-es} as many times as needed, we deduce that for $i=1,2$ and for all $\ell\leq k-1$,
\begin{eqnarray}\label{ext.thet}
  \|\partial_{\mathcal{W}_0}^\ell \widetilde{\theta}^*_{i} \|_{C^{-1,\gamma}}
  &\lesssim& \|\mathcal{W}_0\|_{L^\infty} \|\nabla \partial_{\mathcal{W}_0}^{\ell-1}
  \widetilde{\theta}^*_{i} \|_{C^{-1,\gamma}} \nonumber \\
  & \lesssim& \|\nabla \partial_{\mathcal{W}_0}^{\ell-2}
  \widetilde{\theta}^*_{i} \|_{C^{-1,\gamma}}
  + \|\nabla^2 \partial_{\mathcal{W}_0}^{\ell-2}  \widetilde{\theta}^*_{i} \|_{C^{-1,\gamma}} \nonumber \\
  & \lesssim& \| \nabla \widetilde{\theta}^*_{i} \|_{C^{-1,\gamma}}  + \cdots
  + \|\nabla^{\ell}  \widetilde{\theta}^*_{i} \|_{C^{-1,\gamma}} \nonumber \\
  & \lesssim& \| \widetilde{\theta}^*_{i} \|_{C^{\ell-1,\gamma}} <\infty ,
\end{eqnarray}
where the constants in the above depend only on ${\|\mathcal{W}_0\|_{W^{1,\infty}}}$ and $\|\mathcal{W}_0\|_{W^{\ell-1,\infty}}$, which is the desired result.

The proof of \eqref{eq:str-reg2} is analogous. Indeed,  by using Rychkov's extension theorem, we have
$\theta_0 = \widetilde{\theta}^*_1 \, \mathds{1}_{D_0} + \widetilde{\theta}^*_2\, \mathds{1}_{D_0^c}$
with $\widetilde{\theta}^*_i \in C^\mu(\R^3)$, $i=1,2$. Since we have the control given by \eqref{eq:prod-es} and we have $\mathcal{W}_0\in L^\infty(\R^3)$, then one immediately gets that
\begin{align*}
  \|\partial_{\mathcal{W}_0} \theta_0\|_{C^{-1,\mu}} \lesssim \|\partial_{\mathcal{W}_0} \widetilde{\theta}^*_1\|_{C^{-1,\mu}}
  + \|\partial_{\mathcal{W}_0} \widetilde{\theta}^*_2\|_{C^{-1,\mu}} \lesssim \|\mathcal{W}_0\|_{L^\infty}
  \|(\widetilde{\theta}^*_1,\widetilde{\theta}^*_2)\|_{C^\mu} <\infty.
\end{align*}

\end{proof}

\subsection{Besov spaces}
Let $\chi$ and $\varphi\in C_c^\infty(\mathbb{R}^d)$  be two nonnegative radial functions which are supported respectively in the ball
$\{\xi\in \mathbb{R}^d:|\xi|\leq \frac{4}{3} \}$ and the annulus $\{\xi\in \mathbb{R}^d: \frac{3}{4}\leq |\xi|\leq  \frac{8}{3} \}$. We assume that they satisfy the following identity
\begin{align*}
  \chi(\xi)+\sum\limits_{j\ge0}\varphi(2^{-j}\xi)=1,\quad \textrm{for all}\;\;\xi\in\R^d.
\end{align*}
We define the frequency localization operator $\Delta_j$ and the low-frequency cut-off operator $ S_j$  as follows
\begin{align}\label{eq:Del-Sj}
  \Delta_j f = \varphi(2^{-j}D)f=2^{jd} h(2^j\cdot)*f,\quad  S_j f= \chi(2^{-j}D)f =2^{jd} \widetilde h(2^j\cdot)*f,
\end{align}
for all $j\geq -1$. We also use the homogenous frequency localization operator $\dot\Delta_j$ and
\begin{align}
  \dot{\Delta}_j f=\varphi(2^{-j}D)f=2^{jd}\int_{\R^d}h(2^jy)f(x-y)\,\mathrm{d}y,
\end{align}
where $h=\mathcal{F}^{-1}{\varphi}$, $\widetilde {h}=\mathcal{F}^{-1}{\chi}$ and $\mathcal{F}^{-1}$ is the inverse Fourier transform. \\

For all $f,g\in\mathcal{S}'(\R^d)$, we have the following Bony's decomposition
\begin{equation*}
  f\,g = T_f g + T_g f + R(f,g),
\end{equation*}
with 
\begin{equation}
  T_f g:= \sum_{q\in \N} S_{q-1}f \Delta_q g,\quad R(f,g)=\sum_{q\geq -1}\Delta_q f \widetilde{\Delta}_q g,\quad \widetilde{\Delta}_q := \Delta_{q-1} + \Delta_q + \Delta_{q+1}.
\end{equation}

\begin{definition}\label{def:Besov-str}
Let $s\in \R$, $p,r\in[1,\infty]$, $T>0$, $\mathcal{S}'(\R^d)$ be the space of tempered distributions. %and $\mathcal{P}(\R^d)$ lens des polynomes.
Let $\mathcal{W} = \{W^i\}_{1\leq i\leq N}$ be a family of regular vector fields $W^i:\R^d \rightarrow \R^d$.
We define $B^{s}_{p,r}(\R^d)$ (or $B^{s}_{p,r}$ in short)  to be the set of all $f\in \mathcal{S}'(\R^d)$ satisfying
\begin{align*}
  \|f\|_{B^{s}_{p,r}} :=  \big\| \big\{2^{qs}  \|\Delta_q f\|_{L^p} \big\}_{q\geq -1} \big\|_{\ell^r}  < \infty,
\end{align*}
and the space-time Chemin-Lerner's  space $\widetilde{L}^{\rho}_T( B^{s}_{p,r})$ as the set of tempered distributions $f$ such that
 \begin{align*}
  \|f\|_{\widetilde{L}^{\rho}_T( B^{s}_{p,r})} := \big\| \big\{2^{qs} \|\Delta_q f\|_{L^\rho_T(L^p)} \big\}_{q\geq -1} \big\|_{\ell^r}  < \infty .
\end{align*}
For all  $\ell\in\N$, we denote by $\bb^{s,\ell}_{p,r,\mathcal{W}}$ the set of all $f\in B^{s}_{p,r}$ such that
\begin{align}\label{norm:BBsln2}
  \|f\|_{\bb^{s,\ell}_{p,r,\mathcal{W}}} := \sum_{\lambda=0}^\ell \|\partial_{\mathcal{W}}^\lambda f\|_{B^{s}_{p,r}}
  = \sum_{\lambda=0}^\ell \sum_{\lambda_1+\cdots+\lambda_N=\lambda} \|\partial_{W_1}^{\lambda_1}\cdots \partial_{W_N}^{\lambda_N}f \|_{B^{s }_{p,r}} < \infty ;
\end{align}
and we define by $\BB^{s,\ell}_{p,r,\mathcal{W}}$ the set of  $f\in B^{s}_{p,r}$ such that
\begin{align}\label{norm:BBsln}
  \|f\|_{\BB^{s,\ell}_{p,r, \mathcal{W}}} := \sum_{\lambda=0}^\ell \|(T_{\mathcal{W}\cdot\nabla})^\lambda f\|_{B^{s}_{p,r}}
  = \sum_{\lambda=0}^\ell \sum_{\lambda_1+\cdots +\lambda_N =\lambda} \|(T_{W_1\cdot\nabla})^{\lambda_1}\cdots(T_{W_N\cdot\nabla})^{\lambda_N} f\|_{B^{s }_{p,r}} < \infty .
\end{align}
In particular, when $p=\infty$, we shall use the following short notations
\begin{equation}\label{eq:abbr}
\begin{split}
  \bb^{s,\ell}_{r,\mathcal{W}} := \bb^{s,\ell }_{\infty,r,\mathcal{W}},\quad \BB^{s,\ell}_{r,\mathcal{W}} := \BB^{s,\ell}_{\infty,r,\mathcal{W}},\\
  \bb^{s,\ell}_{\mathcal{W}} := \bb^{s,\ell}_{\infty,1,\mathcal{W}}, \quad \BB^{s,\ell}_{\mathcal{W}} := \BB^{s,\ell}_{\infty,1,\mathcal{W}},
\end{split}
\end{equation}
where we used the following notations  $\partial_{W^i} f= W^i\cdot \nabla f$ ($i=1,\cdots,N$) 
and $\partial_{\mathcal{W}}^\lambda f
  =  \big\{\partial_{W^1}^{\lambda_1}\cdots \partial_{W^N}^{\lambda_N}f: \lambda_1+\cdots+\lambda_N=\lambda\big\}$.
\end{definition}

We shall use the following basic properties of ${\bb^{s,\ell}_{p,r,\WW}}$.
\begin{lemma}\label{lemp:Bes-prop}
  Let $s,\widetilde{s}\in \R$, $\ell,\widetilde{\ell}$, $r, \widetilde{r}\in\N_+$ and $p\in[1,\infty]$.
The space $\bb^{s,\ell}_{p,r,\WW}$ satisfies that
\begin{equation*}
\begin{split}
  & \bb^{s,\ell}_{p,r,\WW} \subset \bb^{\widetilde{s},\ell }_{p,r,\WW}\,\,\, \mathrm{for}\,\,\, s\geq\widetilde{s},\;\;\;\;\;
  \bb^{s,\ell}_{p,r,\WW} \subset \bb^{s,\widetilde{\ell} }_{p,r,\WW}\,\,\, \mathrm{for}\,\,\, \ell \geq \widetilde{\ell},\;\;\;\;\;
  \bb^{s,\ell }_{p,r,\WW} \subset \bb^{s,\ell }_{p,\widetilde{r},\WW}\,\,\, \mathrm{for}\,\,\, r \leq \widetilde{r},\\
  &\quad \|f\|_{\bb^{s,\ell+1}_{p,r,\WW} }= \|\partial_\WW^{\ell+1} f\|_{B^{s}_{p,r}} + \|f\|_{\bb^{s,\ell}_{p,r,\WW}}\quad \mathrm{and}\quad
  \|f\|_{\bb^{s,\ell+1}_{p,r,\WW} } = \|\partial_\WW f\|_{\bb^{s,\ell}_{p,r,\WW}} + \|f\|_{B^{s }_{p,r}} .
\end{split}
\end{equation*}
\end{lemma}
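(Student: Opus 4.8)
The plan is to reduce each of the five assertions to the definition of the norm
$\|f\|_{\bb^{s,\ell}_{p,r,\WW}}=\sum_{\lambda=0}^{\ell}\|\partial_\WW^\lambda f\|_{B^{s}_{p,r}}$ as a \emph{finite} sum of plain Besov norms, and then to invoke only the two classical embeddings of the underlying inhomogeneous Besov scale. First I would record these two facts: for $s\geq\widetilde s$ one has $\|g\|_{B^{\widetilde s}_{p,r}}\lesssim\|g\|_{B^{s}_{p,r}}$, since $2^{q\widetilde s}\leq 2^{qs}$ for every $q\geq 0$ while the single inhomogeneous block $q=-1$ only contributes the harmless factor $2^{s-\widetilde s}$; and for $r\leq\widetilde r$ the inclusion $\ell^r\hookrightarrow\ell^{\widetilde r}$ gives $\|g\|_{B^{s}_{p,\widetilde r}}\leq\|g\|_{B^{s}_{p,r}}$. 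Both are standard and apply to any fixed tempered distribution $g$.

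With these in hand the three inclusions are immediate. For the first, I would apply $B^{s}_{p,r}\hookrightarrow B^{\widetilde s}_{p,r}$ to each of the finitely many fields $\partial_{W^1}^{\lambda_1}\cdots\partial_{W^N}^{\lambda_N}f$ entering the definition and sum over $0\leq\lambda\leq\ell$, obtaining $\|f\|_{\bb^{\widetilde s,\ell}_{p,r,\WW}}\lesssim\|f\|_{\bb^{s,\ell}_{p,r,\WW}}$; the third inclusion is proved identically, using $B^{s}_{p,r}\hookrightarrow B^{s}_{p,\widetilde r}$ term by term. The second inclusion needs no embedding at all: when $\widetilde\ell\leq\ell$ the sum defining $\|f\|_{\bb^{s,\widetilde\ell}_{p,r,\WW}}$ is a partial sum of the nonnegative terms composing $\|f\|_{\bb^{s,\ell}_{p,r,\WW}}$, so the former is bounded by the latter.

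The two displayed identities are pure bookkeeping on the finite sum over $\lambda$. For the first I would split off the top-order slice, writing $\|f\|_{\bb^{s,\ell+1}_{p,r,\WW}}=\sum_{\lambda=0}^{\ell+1}\|\partial_\WW^\lambda f\|_{B^{s}_{p,r}}$ and separating $\lambda=\ell+1$ from $\sum_{\lambda=0}^{\ell}$, the latter being exactly $\|f\|_{\bb^{s,\ell}_{p,r,\WW}}$. For the second I would instead peel off one derivative from the bottom: reindexing $\lambda=\mu+1$ gives $\|f\|_{\bb^{s,\ell+1}_{p,r,\WW}}=\|f\|_{B^{s}_{p,r}}+\sum_{\mu=0}^{\ell}\|\partial_\WW^{\mu+1}f\|_{B^{s}_{p,r}}$, and reading $\partial_\WW f=\{\partial_{W^i}f\}_{1\leq i\leq N}$ one identifies the compositions in $\partial_\WW^{\mu+1}f$ with those of the form $\partial_\WW^{\mu}(\partial_{W^i}f)$, so that $\sum_{\mu=0}^{\ell}\|\partial_\WW^{\mu+1}f\|_{B^{s}_{p,r}}=\|\partial_\WW f\|_{\bb^{s,\ell}_{p,r,\WW}}$.

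I do not expect any genuine obstacle, as the statement is structural. The only two points that deserve a line of care are the role of the low-frequency block $q=-1$ in the embedding $B^{s}_{p,r}\hookrightarrow B^{\widetilde s}_{p,r}$ (which forces an implicit constant rather than a clean inequality, consistent with the inclusions being understood up to equivalence of norms), and the reindexing in the last identity, where one must organize the compositions of $\mu+1$ directional derivatives counted in $\|f\|_{\bb^{s,\ell+1}_{p,r,\WW}}$ against the terms $\partial_\WW^{\mu}(\partial_{W^i}f)$ produced by factoring out the innermost derivative $\partial_{W^i}$; here the consistent convention for ordering the fields $W^i$ fixed in Definition \ref{def:Besov-str} is exactly what makes the two recursive descriptions of the norm agree.
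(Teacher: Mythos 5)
Your verification is correct and is essentially the paper's (implicit) argument: Lemma \ref{lemp:Bes-prop} is stated in the paper without proof, as an immediate consequence of the definition \eqref{norm:BBsln2} of $\|\cdot\|_{\bb^{s,\ell}_{p,r,\WW}}$ as a finite sum of Besov norms, combined with the standard embeddings $B^{s}_{p,r}\hookrightarrow B^{\widetilde s}_{p,r}$ for $s\geq\widetilde s$ and $\ell^{r}\hookrightarrow\ell^{\widetilde r}$ for $r\leq\widetilde r$. The only point of substance is the one you flag at the end — reconciling the ordered compositions $\partial_{W^1}^{\lambda_1}\cdots\partial_{W^N}^{\lambda_N}$ counted in $\|f\|_{\bb^{s,\ell+1}_{p,r,\WW}}$ with the terms $\partial_\WW^{\mu}(\partial_{W^i}f)$ obtained by peeling off the innermost derivative — and your appeal to the fixed ordering convention of Definition \ref{def:Besov-str} is exactly how the identity is to be read.
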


We shall also need to use some product and commutator estimates in $\bb^{s,\ell}_{p,r,\mathcal{W}}$, which are stated below (for the proof see {\it{e.g.}} \cite{ChMX21}).
\begin{lemma}\label{lem:prod-es2}
  Let $k\in \N$, $\sigma\in(0,1)$ and let $\mathcal{W}= \{W^i\}_{1\leq i\leq N}$ be a family of regular divergence-free vector fields on $\R^d$ satisfying that
\begin{align}\label{norm:W}
  \|\mathcal{W}\|_{\bb^{1+\sigma,k-1}_{\infty,\mathcal{W}}}:= \sum_{\ell=0}^{k-1} \|\partial_\mathcal{W}^\ell \mathcal{W}\|_{B^{1+\sigma}_{\infty,\infty}}
  = \sum_{\ell=0}^{k-1} \sum_{\ell_1 +\cdots + \ell_N =\ell} \|\partial_{W_1}^{\ell_1} \cdots \partial_{W_N}^{\ell_N} \mathcal{W}\|_{B^{1+\sigma}_{\infty,\infty}} < \infty.
\end{align}
Let $m(D)$ be a zero-order pseudo-differential operator defined as a Fourier multiplier with symbol $m(\xi)\in C^\infty(\R^d\backslash\{0\})$.  Assume that $v$ is a smooth divergence-free vector field of $\R^d$ and $f:\R^d\rightarrow \R$ is a smooth function.
Then, for all $\epsilon\in(0,1)$ and all $p,r\in[1,\infty]$, there exists a constant $C>0$ depending only on $d,k,\epsilon$ and $ \|\mathcal{W}\|_{\bb^{1+\sigma,k-1}_{\infty,\mathcal{W}}}$
such that the following estimates hold:
\begin{align}\label{eq:prod-es2-1}
  \|v\cdot \nabla f\|_{\bb^{-\epsilon,k}_{p,r,\mathcal{W}} } \leq C\min \bigg\{ \sum_{\mu=0}^k \|v\|_{\bb^{0,\mu}_\mathcal{W}} \|\nabla f\|_{\bb^{-\epsilon,k-\mu}_{p,r,\mathcal{W}} },
  \sum_{\mu=0}^k \|v\|_{\bb^{-\epsilon,\mu}_{p,r,\mathcal{W}} } \|\nabla f\|_{\bb^{0,k-\mu}_\mathcal{W}} \bigg\},
\end{align}

\begin{align}\label{eq:prod-es2-2}
  \|m(D)f\|_{\bb^{-\epsilon, k+1}_{p,r,\mathcal{W}}} \leq C  \|f\|_{\bb^{-\epsilon, k+1}_{p,r,\mathcal{W}}}+ C\big(1+\|\mathcal{W}\|_{\bb^{1,k}_\mathcal{W}}\big)
  \big( \|f\|_{\bb^{-\epsilon, k}_{p,r,\mathcal{W}}}+  \|\Delta_{-1}m(D)f\|_{L^p}\big),
\end{align}
and
\begin{align}\label{eq:prod-es4-1}
  \|[m(D),v\cdot \nabla] f\|_{\bb^{-\epsilon,k}_{p,r,\mathcal{W}} } \leq C \Big( \|\nabla v\|_{\bb^{0,k}_\mathcal{W}} + \|v\|_{L^\infty}\Big)\| f\|_{\bb^{-\epsilon,k}_{p,r,\mathcal{W}}}.
\end{align}
\end{lemma}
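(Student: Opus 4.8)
The plan is to prove all three estimates by induction on the number $k$ of striated derivatives, reducing in each case to a classical (non-striated) estimate in $B^{-\epsilon}_{p,r}$ at the base level and then propagating through the induction with two elementary commutation identities. The first is the Lie-bracket relation
\begin{equation*}
  [\partial_{W^i}, v\cdot\nabla] f = \big(\partial_{W^i} v - \partial_v W^i\big)\cdot\nabla f = [W^i, v]\cdot\nabla f,
\end{equation*}
valid because $\partial_{W^i}$ and $v\cdot\nabla$ are first-order operators; the corrective term $\partial_v W^i = v\cdot\nabla W^i$ carries one derivative of $\mathcal{W}$ and is exactly what forces the constant $C$ to depend on $\|\mathcal{W}\|_{\bb^{1+\sigma,k-1}_{\infty,\mathcal{W}}}$ through \eqref{norm:W}. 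The second identity records that $m(D)$ has constant coefficients, so that $\partial_{W^i}\big(m(D)g\big) = m(D)\big(\partial_{W^i}g\big) + [W^i\cdot\nabla, m(D)]\,g$, where the commutator of a variable-coefficient first-order operator with a zero-order Fourier multiplier is again a zero-order operator whose $B^{-\epsilon}_{p,r}\to B^{-\epsilon}_{p,r}$ norm is controlled by $\|\nabla W^i\|_{L^\infty}$ together with lower-order striated norms of $W^i$.

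First I would dispatch the base cases. For \eqref{eq:prod-es2-1} at $k=0$ I would exploit the divergence-free structure $v\cdot\nabla f = \mathrm{div}(fv)$, so that $\|v\cdot\nabla f\|_{B^{-\epsilon}_{p,r}}\lesssim \|fv\|_{B^{1-\epsilon}_{p,r}}$, and then apply Bony's decomposition $fv = T_f v + T_v f + R(f,v)$ together with the restriction $\epsilon\in(0,1)$ (needed to converge the remainder term) to obtain the two symmetric bounds by $\|v\|_{L^\infty}\|\nabla f\|_{B^{-\epsilon}_{p,r}}$ and by $\|v\|_{B^{-\epsilon}_{p,r}}\|\nabla f\|_{L^\infty}$, whence the minimum. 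For the base case of \eqref{eq:prod-es2-2} I would simply invoke the boundedness of the zero-order multiplier $m(D)$ on $B^{-\epsilon}_{p,r}$, isolating the low-frequency block $\Delta_{-1}$ whose output is invisible to the negative-index norm and must therefore be measured separately in $L^p$, which is the origin of the term $\|\Delta_{-1}m(D)f\|_{L^p}$. For the base case of \eqref{eq:prod-es4-1} I would use the standard commutator estimate for $[m(D), v\cdot\nabla]$, once more via Bony's decomposition, which yields a genuinely order-zero bound $(\|\nabla v\|_{L^\infty}+\|v\|_{L^\infty})\|f\|_{B^{-\epsilon}_{p,r}}$.

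For the inductive step I would expand $\partial_{\mathcal{W}}^\lambda$ (for $1\le\lambda\le k$, resp. $k+1$) applied to the relevant object and commute the directional derivatives inward using the two identities above. In \eqref{eq:prod-es2-1} this rewrites $\partial_{\mathcal{W}}^\lambda(v\cdot\nabla f)$ as a finite sum of terms $(\partial_{\mathcal{W}}^{a}\widetilde v)\cdot\nabla(\partial_{\mathcal{W}}^{b}f)$ with $a+b\le\lambda$, where $\widetilde v$ is $v$ modified by iterated Lie brackets against $\mathcal{W}$; applying the base bilinear estimate to each term in either of its two forms, distributing the striated budget, and summing over $\lambda$ by means of Lemma \ref{lemp:Bes-prop} gives the claimed min-bound. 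In \eqref{eq:prod-es2-2} the leading term $m(D)\partial_{\mathcal{W}}^{k+1}f$ is controlled by $\|f\|_{\bb^{-\epsilon,k+1}_{p,r,\mathcal{W}}}$, whereas each commutator $[W^i\cdot\nabla, m(D)]$ generated along the way lowers the striated order by one and contributes the factor $(1+\|\mathcal{W}\|_{\bb^{1,k}_{\mathcal{W}}})$ times the lower-order norm $\|f\|_{\bb^{-\epsilon,k}_{p,r,\mathcal{W}}}$. Estimate \eqref{eq:prod-es4-1} is then handled by the same commutation of $\partial_{\mathcal{W}}^\lambda$ through $[m(D), v\cdot\nabla]$, with the base commutator estimate invoked at the end.

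The main obstacle, and the step demanding the most care, is the bookkeeping in \eqref{eq:prod-es2-2}: one must check that every commutator produced by sliding $\partial_{\mathcal{W}}^{k+1}$ past $m(D)$ is a genuinely bounded zero-order operator on $B^{-\epsilon}_{p,r}$ with norm controlled only by $\|\mathcal{W}\|_{\bb^{1,k}_{\mathcal{W}}}$, and that the unavoidable low-frequency defect $\|\Delta_{-1}m(D)f\|_{L^p}$ does not accumulate at each iteration but can be absorbed into a single such term on the right-hand side. This is precisely where the structure of the commutator between a variable-coefficient transport field and a Fourier multiplier, and its bound in terms of striated norms of $\mathcal{W}$, becomes essential; it is also the reason the hypothesis \eqref{norm:W} is imposed with the slightly higher regularity index $1+\sigma$ rather than $1$, so that $\nabla\mathcal{W}$ is controlled in $L^\infty$ with room to spare.
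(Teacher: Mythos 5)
The paper does not actually prove this lemma; it cites \cite{ChMX21}, where the argument is carried out in the paradifferential framework: one works with the para-vector fields $T_{\mathcal{W}\cdot\nabla}$ and the spaces $\BB^{s,\ell}_{p,r,\mathcal{W}}$ of \eqref{norm:BBsln}, proves the estimates there, and then transfers them to $\bb^{s,\ell}_{p,r,\mathcal{W}}$ via the norm equivalences of Lemma \ref{lem:Bes-equ} (this is where the extra regularity $1+\sigma$ in \eqref{norm:W} is consumed). Your plan replaces this by a purely ``physical'' induction with Lie brackets, and that is where it breaks.

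The concrete gap is in the inductive step for \eqref{eq:prod-es2-1} (and it recurs in \eqref{eq:prod-es4-1}). Your base case genuinely needs $\divg v=0$: writing $v\cdot\nabla f=\divg(fv)$ is what rescues the Bony remainder, since $2^{-q\epsilon}\|\Delta_q R(v,\nabla f)\|_{L^p}\lesssim\sum_{j\geq q-3}2^{(j-q)\epsilon}\|\Delta_j v\|_{L^\infty}\,\|\nabla f\|_{B^{-\epsilon}_{p,\infty}}$ diverges for $v$ merely bounded, whereas $\divg R(v,f)$ gains the needed derivative. But after one commutation your terms are $(\partial_{\mathcal{W}}^{a}\widetilde v)\cdot\nabla(\partial_{\mathcal{W}}^{b}f)$ with $\widetilde v$ built from $\partial_{W}v$, $[W,v]$, $v\cdot\nabla W$, none of which is divergence-free ($\divg(W\cdot\nabla v)=\nabla W:(\nabla v)^{t}\neq 0$ in general) and none of which has positive H\"older regularity (only $B^{0}_{\infty,1}\subset L^\infty$). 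Hence the bilinear bound you invoke at the base level simply does not apply to these terms, and the product $L^\infty\times B^{-\epsilon}_{p,r}\to B^{-\epsilon}_{p,r}$ you would need instead is false. This is exactly the obstruction that the operator $T_{\mathcal{W}\cdot\nabla}$ is designed to bypass: $T_{u\cdot\nabla}$ maps $B^{-\epsilon}_{p,r}$ to itself with norm $\lesssim\|u\|_{L^\infty}$ with no divergence condition, and it commutes with $\divg$ up to para-vector fields with coefficients $\nabla\mathcal{W}\in L^\infty$, so the $\divg(f\,\cdot)$ structure survives the whole induction. Unless you either restore a divergence structure in every commutator term or switch to the $\BB^{s,\ell}$ formalism and then invoke Lemma \ref{lem:Bes-equ}, the induction does not close. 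The bookkeeping you flag for \eqref{eq:prod-es2-2} (iterating $[W\cdot\nabla,m(D)]$ and controlling the low-frequency defect) is indeed delicate but is a secondary issue compared with this one.
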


The following result deals with the relation between the norms of $\bb^{s,\ell}_{p,r,\WW}$ and $\BB^{s,\ell}_{p,r,\WW}$ (see {\it e.g.} Lemmas 5.1, 5.2 of \cite{ChMX21}).
\begin{lemma}\label{lem:Bes-equ}
Under the assumptions of Lemma \ref{lem:prod-es2}, we have that there exists a constant $C>0$ depending only on $d,k,s$ and $ \|\mathcal{W}\|_{\bb^{1+\sigma,k-1}_{\infty,\mathcal{W}}}$ such that, for any $\ell\leq k$,
\begin{align}\label{es.bb.TNa}
  \|\nabla f\|_{\widetilde{\bb}^{s,\ell}_{p,r,\mathcal{W}} } \leq C \| f\|_{\widetilde{\bb}^{s+1,\ell}_{p,r,\mathcal{W}} },\qquad \mathrm{for}\;\,s>-1,
\end{align}
\begin{align}\label{es.bb.TNa.eqv}
  C^{-1} \| f\|_{{\bb}^{s,\ell}_{p,r,\mathcal{W}} } \leq  \| f\|_{\widetilde{\bb}^{s,\ell}_{p,r,\mathcal{W}} }\leq C \| f\|_{{\bb}^{s,\ell}_{p,r,\mathcal{W}} }\qquad\mathrm{for}\,\,s\in (-1,1),
\end{align}
\begin{align}\label{es.bb1.TNa.eqv}
  C^{-1}\| f\|_{{\bb}^{1,\ell}_{\mathcal{W}} } \leq \| f\|_{\widetilde{\bb}^{1,\ell}_{\mathcal{W}} }\leq C \| f\|_{{\bb}^{1,\ell}_{\mathcal{W}} },
\end{align}
\begin{align}\label{es.bb.TNafW}
  \|f\|_{\widetilde{\bb}^{s,\ell}_{p,r,\mathcal{W}} } \leq C \| f\|_{{\bb}^{s,\ell}_{p,r,\mathcal{W}} }+C\| f\|_{{\bb}^{1,\ell}_{\mathcal{W}} }\| W\|_{{\bb}^{s,\ell}_{p,r,\mathcal{W}} }\qquad\mathrm{for}\,\,s\ge 1,
\end{align}
\begin{align}\label{es.bb.TNaW}
  \|\mathcal{W}\|_{\widetilde{\bb}^{s,\ell}_{p,r,\mathcal{W}} } \leq C \| \mathcal{W}\|_{{\bb}^{s,\ell}_{p,r,\mathcal{W}} },\qquad \mathrm{for}\;\,s>-1.
\end{align}
\end{lemma}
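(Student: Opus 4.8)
The plan is to reduce all five inequalities to a single \emph{one-derivative} comparison between the genuine directional derivative $\partial_{\mathcal{W}}f=W\cdot\nabla f$ and the paraproduct operator $T_{W\cdot\nabla}f$, and then to propagate that comparison through the striated hierarchy by induction on the order $\ell$ (recall $\widetilde{\bb}^{s,\ell}_{p,r,\mathcal{W}}=\BB^{s,\ell}_{p,r,\mathcal{W}}$ is exactly the space built from the $T_{W\cdot\nabla}$ in Definition \ref{def:Besov-str}). The starting point is Bony's decomposition applied componentwise to $W\cdot\nabla f=\sum_j W^j\partial_j f$, which yields the identity
\begin{equation*}
  \partial_{\mathcal{W}}f - T_{W\cdot\nabla}f \;=\; \sum_{j}\big(T_{\partial_j f}\,W^j + R(W^j,\partial_j f)\big)\;=:\;\mathcal{R}(W,f).
\end{equation*}
The whole point is that $\mathcal{R}(W,\cdot)$ is of strictly lower order than $\partial_{\mathcal{W}}$: in both the low–high paraproduct $T_{\partial_j f}W^j$ and the resonant term $R(W^j,\partial_j f)$ the derivative that is ``lost'' falls on the smooth factor $W\in B^{1+\sigma}_{\infty,\infty}$ rather than on $f$.

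The core is therefore to bound $\mathcal{R}(W,f)$ by $f$ measured one order lower than $\partial_{\mathcal{W}}f$ would require. For the resonant part I would use the divergence-free hypothesis to rewrite, via $R(\divg W,f)=0$,
\begin{equation*}
  \sum_{j} R(W^j,\partial_j f) \;=\; \sum_j \partial_j R(W^j,f) \;=\; \divg\big(R(W,f)\big),
\end{equation*}
so that $R(W,f)\in B^{1+\sigma+s}_{p,r}$ for $1+\sigma+s>0$ gains a full derivative before differentiation, hence is harmless for every $s>-1$. For the low–high paraproduct I would invoke the standard bound $\|T_g h\|_{B^{a+b}_{p,r}}\lesssim\|g\|_{B^{a}_{\infty,\infty}}\|h\|_{B^{b}_{p,r}}$ with $a<0$; tracking the Littlewood--Paley indices, this closes precisely for $s<1$, giving $\|\mathcal{R}(W,f)\|_{B^s_{p,r}}\lesssim\|W\|_{B^{1+\sigma}_{\infty,\infty}}\|f\|_{B^s_{p,r}}$ in the range $-1<s<1$. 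Estimate \eqref{es.bb.TNa} stands slightly apart and is cleanest: commuting $\nabla$ through the paraproduct gives $[\partial_i,T_{W^j}\partial_j]g=T_{\partial_i W^j}\partial_j g$, a zeroth-order operator in $f$, so an induction on $\lambda\le\ell$ together with the elementary bound $\|\nabla g\|_{B^s_{p,r}}\lesssim\|g\|_{B^{s+1}_{p,r}}$ ($s>-1$) yields the claim.

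To pass from this one-derivative comparison to the full norms I would induct on $\ell$. Assuming the equivalence of $\|\cdot\|_{\bb^{s,\ell-1}_{p,r,\mathcal{W}}}$ and $\|\cdot\|_{\widetilde{\bb}^{s,\ell-1}_{p,r,\mathcal{W}}}$, I expand $\partial_{\mathcal{W}}^{\ell}f$ by inserting the master identity at each slot: every top-order term reproduces $(T_{W\cdot\nabla})^{\ell}f$, while each commutator and each occurrence of $\mathcal{R}$ costs exactly one striated derivative on $f$ but supplies a factor controlled by $\|\mathcal{W}\|_{\bb^{1+\sigma,k-1}_{\infty,\mathcal{W}}}$. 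These lower-order terms are absorbed by the summed norm $\sum_{\lambda\le\ell}$ and the induction hypothesis, which is how the constants acquire their stated dependence on $\|\mathcal{W}\|_{\bb^{1+\sigma,k-1}_{\infty,\mathcal{W}}}$. This gives the two-sided equivalence \eqref{es.bb.TNa.eqv}, and the endpoint \eqref{es.bb1.TNa.eqv} is the same argument run at $s=1$, where the $\ell^1$-summation ($r=1$) restores the borderline estimate. Finally \eqref{es.bb.TNaW} is the special case $f=\mathcal{W}$ of the upper bound: the bilinear correction there reduces to $\|\mathcal{W}\|_{\bb^{1,\ell}_{\mathcal{W}}}\|\mathcal{W}\|_{\bb^{s,\ell}_{p,r,\mathcal{W}}}$, whose first factor is part of the controlled quantity, so it collapses into $\lesssim\|\mathcal{W}\|_{\bb^{s,\ell}_{p,r,\mathcal{W}}}$.

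I expect the main obstacle to be the high-regularity regime $s\ge1$ of \eqref{es.bb.TNafW}. There the low–high paraproduct $T_{\partial_j f}W^j$ in $\mathcal{R}(W,f)$ no longer gains: its output lands in $B^s_{p,r}$ only at the cost of measuring $W$ in $B^s_{p,r}$ and $f$ at regularity $1$, which is exactly the bilinear correction $\|f\|_{\bb^{1,\ell}_{\mathcal{W}}}\|W\|_{\bb^{s,\ell}_{p,r,\mathcal{W}}}$ appearing in the statement. Producing this term with the correct norms, and — throughout the induction on $\ell$ — keeping the proliferation of commutator and remainder terms organized so that the constants depend solely on $\|\mathcal{W}\|_{\bb^{1+\sigma,k-1}_{\infty,\mathcal{W}}}$, is the delicate bookkeeping the argument must manage; this is where I would lean on the combinatorial structure already developed in \cite{ChMX21}.
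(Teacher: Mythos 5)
Your outline is essentially the argument of Lemmas 5.1--5.2 of \cite{ChMX21}, which is exactly where the paper sends the reader for this statement (no proof is given in the paper itself). The key points you identify --- the Bony comparison $\partial_{\mathcal{W}}f-T_{\mathcal{W}\cdot\nabla}f=T_{\nabla f}\cdot\mathcal{W}+R(\mathcal{W},\nabla f)$, the use of $\divg\mathcal{W}=0$ to rewrite the remainder as $\divg R(\mathcal{W},f)$, the restriction $s<1$ coming from the low--high paraproduct with the $p=\infty$, $r=1$ endpoint at $s=1$, the bilinear correction $\|f\|_{\bb^{1,\ell}_{\mathcal{W}}}\|\mathcal{W}\|_{\bb^{s,\ell}_{p,r,\mathcal{W}}}$ for $s\ge1$, and the induction on $\ell$ --- match the cited proof, so the proposal is correct and follows the same route.
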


Note that for the particular case $k=0,1$, the dependence of $\WW$ in the constant $C$ in the Lemma \ref{lem:prod-es2}
can be computed explicitly (see {\it e.g. \cite{ChMX21}}).
\begin{lemma}\label{lem:str-es1}
Let $v$ be a smooth divergence-free vector field of $\R^d$, and let $f:\R^d\rightarrow \R$ be a smooth function.
Then for any  $\epsilon\in (0,1)$ and $(p,r)\in [1,\infty]^2$, the following statements hold true for an absolute constant $C$.
\begin{enumerate}[(1)]
\item We have
\begin{equation}\label{eq:prod-es}
  \|v\cdot \nabla f\|_{B^{-\epsilon}_{p,r}} \leq C \min\Big\{\|v\|_{B^{-\epsilon}_{p,r}} \|\nabla f\|_{L^\infty}, \|v\|_{L^\infty} \|\nabla f\|_{B^{-\epsilon}_{p,r}} \Big\}.
\end{equation}
\item  We have
\begin{align}\label{eq:prod-es4}
  \|\partial_\mathcal{W} m(D)f\|_{B^{-\epsilon}_{p,r}} \leq& C  \|\mathcal{W}\|_{W^{1,\infty}} \|f\|_{B^{-\epsilon}_{p,r}} +C \|\partial_\mathcal{W} f\|_{B^{-\epsilon}_{p,r}} ,
\end{align}
and
\begin{align}\label{eq:prod-es5}
  \|[m(D),v\cdot \nabla] f\|_{B^{-\epsilon}_{p,r} } \leq& C \|v\|_{W^{1,\infty}}\| f\|_{B^{-\epsilon}_{p,r} }.
\end{align}
\end{enumerate}
\end{lemma}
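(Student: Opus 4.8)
The plan is to derive all three inequalities from Bony's paraproduct decomposition, and to observe that \eqref{eq:prod-es4} reduces to the commutator bound \eqref{eq:prod-es5}. First I would record the two elementary facts that make this reduction work: $m(D)$ maps $B^{-\epsilon}_{p,r}$ to itself (since $\|m(D)\Delta_q g\|_{L^p}\lesssim\|\Delta_q g\|_{L^p}$ uniformly in $q$, so that $m(D)$ need only ever be applied to frequency-localized functions, which keeps all constants independent of $p,r$), and that $m(D)$ commutes with $\nabla$. Writing $\partial_{\mathcal{W}}m(D)f=\sum_i W^i\cdot\nabla m(D)f=m(D)\partial_{\mathcal{W}}f+\sum_i[W^i\cdot\nabla,m(D)]f$ then yields \eqref{eq:prod-es4} as soon as \eqref{eq:prod-es5} is available, because $\|m(D)\partial_{\mathcal{W}}f\|_{B^{-\epsilon}_{p,r}}\lesssim\|\partial_{\mathcal{W}}f\|_{B^{-\epsilon}_{p,r}}$ and $\sum_i\|W^i\|_{W^{1,\infty}}=\|\mathcal{W}\|_{W^{1,\infty}}$. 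Thus the genuine content is \eqref{eq:prod-es} and \eqref{eq:prod-es5}.

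For \eqref{eq:prod-es} I would Bony-decompose $v\cdot\nabla f=\sum_j\big(T_{v^j}\partial_j f+T_{\partial_j f}v^j+R(v^j,\partial_j f)\big)$. The two paraproduct terms are controlled by the two standard paraproduct inequalities $\|T_g h\|_{B^{-\epsilon}_{p,r}}\lesssim\|g\|_{L^\infty}\|h\|_{B^{-\epsilon}_{p,r}}$ (valid for every index) and $\|T_g h\|_{B^{-\epsilon}_{p,r}}\lesssim\|h\|_{L^\infty}\|g\|_{B^{-\epsilon}_{p,r}}$ (valid here precisely because $-\epsilon<0$, so the low-frequency geometric sum $\sum_{q''<q}2^{q''\epsilon}$ converges at its top). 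Distributing these two roles across $T_{v^j}\partial_j f$ and $T_{\partial_j f}v^j$ reproduces exactly the two quantities inside the $\min$. The remainder is the delicate piece: with $v^j\in L^\infty=B^0_{\infty,\infty}$ (respectively $\nabla f\in L^\infty$), the term $R(v^j,\partial_j f)$ sits at the borderline negative index where the remainder law fails, so I would use the divergence-free identity $\sum_j R(v^j,\partial_j f)=\sum_j\partial_j R(v^j,f)$ (which holds since $\divg v=0$) to move one derivative outside. Then $\big\|\sum_j\partial_j R(v^j,f)\big\|_{B^{-\epsilon}_{p,r}}\lesssim\sum_j\|R(v^j,f)\|_{B^{1-\epsilon}_{p,r}}$ now lands in the positive range $1-\epsilon>0$, where $\|R(v^j,f)\|_{B^{1-\epsilon}_{p,r}}\lesssim\|v\|_{L^\infty}\|\nabla f\|_{B^{-\epsilon}_{p,r}}$, with the symmetric bound following from $\|f\|_{\dot B^1_{\infty,\infty}}\lesssim\|\nabla f\|_{L^\infty}$ on the high-frequency blocks.

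For the commutator \eqref{eq:prod-es5} I would write $[m(D),v\cdot\nabla]f=\sum_j[m(D),v^j]\partial_j f$ and Bony-decompose each internal product $v^j\,\partial_j f$, producing three families: the paraproduct-multiplier commutator $[m(D),T_{v^j}]\partial_j f$, the term $m(D)T_{\partial_j f}v^j-T_{m(D)\partial_j f}v^j$, and the remainder commutator. The main term is $[m(D),T_{v^j}]\partial_j f$: expanding $m$ to first order on each block $[m(D),S_{q-1}v^j]\Delta_q(\partial_j f)$ gains a factor $2^{-q}\|\nabla v^j\|_{L^\infty}$, giving $\|[m(D),T_{v^j}]g\|_{B^{s+1}_{p,r}}\lesssim\|\nabla v\|_{L^\infty}\|g\|_{B^s_{p,r}}$ for every $s$, which applied with $g=\partial_j f$ and $s=-1-\epsilon$ produces the desired $\|\nabla v\|_{L^\infty}\|f\|_{B^{-\epsilon}_{p,r}}$. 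In the remaining two families the high frequency always falls on $v^j$, so Bernstein's inequality $\|\Delta_q v^j\|_{L^\infty}\lesssim 2^{-q}\|\nabla v\|_{L^\infty}$ supplies the missing derivative, while the remainder family is again forced into positive regularity through $\sum_j R(v^j,\partial_j f)=\sum_j\partial_j R(v^j,f)$ before estimating.

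I expect the main obstacle to be exactly this borderline remainder together with the symbol commutator: the naive remainder law is unavailable at index $-\epsilon$, and it is the combination of the divergence-free structure (to recast the remainder with a derivative pulled outside, landing in $B^{1-\epsilon}_{p,r}$) and the first-order Taylor expansion of $m$ (to convert the paraproduct-multiplier commutator into a genuine gain of one derivative) that simultaneously rescues \eqref{eq:prod-es} and \eqref{eq:prod-es5}. Everything else is standard paraproduct and Bernstein bookkeeping, uniform in $p,r$ because $m(D)$ only acts on frequency-localized pieces.
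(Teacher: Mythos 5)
The paper does not prove this lemma itself (it is quoted from \cite{ChMX21} as the $k=0,1$ case of Lemma \ref{lem:prod-es2}), but its appendix proof of the closely related Lemma \ref{lem:parW-mDf} follows exactly your architecture: Bony splitting into $T_{\mathcal{W}\cdot\nabla}m(D)f$, $T_{\nabla m(D)f}\cdot\mathcal{W}$ and the remainder, the divergence-free identity to pull a derivative out of the remainder, and a first-order expansion of the localized symbol to gain $2^{-j}\|\nabla S_{j-1}\mathcal{W}\|_{L^\infty}$ in the paraproduct--multiplier commutator. So your route is the intended one and the core ideas are all present.

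Two low-frequency points are glossed over and need care. First, the intermediate claim $\|R(v^j,f)\|_{B^{1-\epsilon}_{p,r}}\lesssim\|v\|_{L^\infty}\|\nabla f\|_{B^{-\epsilon}_{p,r}}$ is not literally true: the block $\Delta_{-1}f$ inside the remainder cannot be recovered from $\nabla f$. The fix is standard and is exactly what the appendix does: only pull the divergence out of the blocks $\Delta_q v\,\widetilde\Delta_q(\cdot)$ with $q\gtrsim 1$ (where reverse Bernstein applies), and estimate the finitely many ball-supported blocks directly in $L^p$ without moving the derivative. Second, your opening assertion that $\|m(D)\Delta_q g\|_{L^p}\lesssim\|\Delta_q g\|_{L^p}$ \emph{uniformly in $q$} fails at $q=-1$: for a general zero-order symbol $m$ smooth only away from the origin, the kernel of $\Delta_{-1}m(D)$ decays like $|x|^{-d}$ and is not integrable, so $m(D)$ is not bounded on the inhomogeneous space $B^{-\epsilon}_{p,r}$ for $p\in\{1,\infty\}$ — this is precisely why the paper's higher-order estimate \eqref{eq:prod-es2-2} carries the extra term $\|\Delta_{-1}m(D)f\|_{L^p}$. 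Your reduction of \eqref{eq:prod-es4} to \eqref{eq:prod-es5} survives, but only because $m(D)$ is there applied to $\partial_{\mathcal{W}}f=\divg(\mathcal{W}f)$ (and, in \eqref{eq:prod-es5}, to $v\cdot\nabla f=\divg(vf)$), so that the low-frequency operator is $\Delta_{-1}m(D)\divg$, whose symbol vanishes linearly at the origin and whose kernel is integrable; you should make this divergence structure explicit at the $\Delta_{-1}$ block rather than invoking boundedness of $m(D)$ itself. With these two adjustments the argument closes and matches the source proof.
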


If the divergence-free vector fields $W^i$ in $\mathcal{W}$ only belongs to $C^\gamma(\R^d)$ with $0<\gamma <1$,
then we have the following estimate \eqref{eq:prod-es4} whose proof is placed in the appendix section.
\begin{lemma}\label{lem:parW-mDf}
  Let $\mathcal{W}=\{W^i\}_{1\leq i\leq N}$ be a family  of divergence-free vector fields of $\R^d$
and $f:\R^d\rightarrow \R$ be a smooth function.
Then, for all $(p,r)\in [1,\infty]^2$, $0<\epsilon,\gamma <1$, there exists a nonnegative constant $C=C(\epsilon,\gamma,d)$ such that
\begin{align}\label{eq:par-W-mDf}
  \|\partial_\mathcal{W} m(D) f\|_{B^{-\epsilon}_{p,r}} \leq C \|\mathcal{W}\|_{C^\gamma}  \|f\|_{B^{1-\gamma-\epsilon}_{p,r}}
  + C\|\partial_\mathcal{W} f\|_{B^{-\epsilon}_{p,r}} .
\end{align}
\end{lemma}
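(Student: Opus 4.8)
The plan is to turn the estimate into a commutator bound and then let the divergence-free structure of $\mathcal W$ rescue the resonant interactions. Since $\|\partial_{\mathcal W}g\|_{B^{-\epsilon}_{p,r}}$ is a finite sum of terms $\|W^i\cdot\nabla g\|_{B^{-\epsilon}_{p,r}}$, it suffices to prove the estimate for a single divergence-free field $W=W^i$, writing $\partial_W=W\cdot\nabla$. Because $m(D)$ commutes with each $\partial_k$, I would first write
\[
  \partial_W m(D)f = m(D)\,\partial_W f \;-\; \sum_{k} [m(D),W_k]\,\partial_k f ,
\]
where $[m(D),W_k]g=m(D)(W_k g)-W_k\,m(D)g$ is the commutator with multiplication by $W_k$. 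The first term is harmless: $m(D)$ is a zero-order multiplier whose symbol is smooth away from the origin, hence bounded on $B^{-\epsilon}_{p,r}$, so $\|m(D)\partial_W f\|_{B^{-\epsilon}_{p,r}}\lesssim\|\partial_W f\|_{B^{-\epsilon}_{p,r}}$; this produces the second term on the right-hand side of \eqref{eq:par-W-mDf}. It then remains to bound the commutator sum by $\|W\|_{C^\gamma}\|f\|_{B^{1-\gamma-\epsilon}_{p,r}}$.

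For the commutator I would use Bony's decomposition, writing $W_k\,\partial_k f=T_{W_k}\partial_k f+T_{\partial_k f}W_k+R(W_k,\partial_k f)$ and likewise for $W_k\,m(D)\partial_k f$. The low--high part gives the paraproduct commutator $[m(D),T_{W_k}]\partial_k f=\sum_q[m(D),S_{q-1}W_k]\Delta_q\partial_k f$; here $S_{q-1}W_k$ is smooth with $\|\nabla S_{q-1}W_k\|_{L^\infty}\lesssim 2^{q(1-\gamma)}\|W\|_{C^\gamma}$, so the commutator of the zero-order multiplier with multiplication by $S_{q-1}W_k$ is of order $-1$ and gains a derivative, leading to the dyadic bound $2^{q(1-\gamma-\epsilon)}\|\Delta_q f\|_{L^p}\|W\|_{C^\gamma}$, i.e. $\|W\|_{C^\gamma}$ times the $B^{1-\gamma-\epsilon}_{p,r}$ Besov sequence of $f$. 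The high--low part, after using that $m(D)$ commutes with $S_{q-1}$, reduces to $\sum_q[m(D),\Delta_q W_k]S_{q-1}\partial_k f$ with output localized at frequency $2^q$; the two factors sit at comparable scales, so this is a non-resonant product controlled by $\|W\|_{C^\gamma}\|f\|_{B^{1-\gamma-\epsilon}_{p,r}}$ after a Young (convolution) estimate on the dyadic indices.

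The hard part is the resonant (high--high) piece $\sum_k\big(m(D)R(W_k,\partial_k f)-R(W_k,m(D)\partial_k f)\big)$. Taken separately, neither $m(D)R(W_k,\partial_k f)$ nor $R(W_k,m(D)\partial_k f)$ lies in $B^{-\epsilon}_{p,r}$, because the low-frequency output of a high--high interaction would demand one further derivative of regularity on $f$ than is available; a crude bound produces the non-summable factor $\sum_{q\ge q'}2^{q\epsilon}d_q$. The divergence-free condition is exactly what saves this. Using that $m(D)$ commutes with $\widetilde{\Delta}_q$, this piece equals $\sum_q\sum_k[m(D),\Delta_q W_k]\widetilde{\Delta}_q\partial_k f$, whose bilinear Fourier symbol at output frequency $\zeta$, with $W$ localized at $\zeta-\xi$ and $f$ at $\xi\sim 2^q$, is proportional to $(m(\zeta)-m(\xi))\sum_k\xi_k\,\widehat{\Delta_q W_k}(\zeta-\xi)$. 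The identity $\sum_k\xi_k\,\widehat{\Delta_q W_k}(\zeta-\xi)=\zeta\cdot\widehat{\Delta_q W}(\zeta-\xi)$, which follows from $\divg W=0$ (so that $(\zeta-\xi)\cdot\widehat{\Delta_q W}(\zeta-\xi)=0$), replaces the large frequency $|\xi|\sim 2^q$ by the small output frequency $|\zeta|\sim 2^{q'}$, manufacturing a gain $2^{q'-q}$.

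With this gain the dyadic bound for the resonant term becomes $2^{(q'-q)(1-\epsilon)}d_q\,\|W\|_{C^\gamma}$, where $\{d_q\}\in\ell^r$ is the $B^{1-\gamma-\epsilon}_{p,r}$ sequence of $f$; since $1-\epsilon>0$ the kernel $2^{(q'-q)(1-\epsilon)}\mathbf{1}_{q\ge q'-2}$ is summable in the dyadic index, and Young's inequality for series yields $\lesssim\|W\|_{C^\gamma}\|f\|_{B^{1-\gamma-\epsilon}_{p,r}}$. Collecting the low--high, high--low and resonant contributions, and summing over the finitely many fields $W^i$ (absorbing the count into the constant), gives the stated estimate \eqref{eq:par-W-mDf}. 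The single genuine obstacle is the resonant block, and the whole point is that the divergence-free cancellation upgrades it from a logarithmically (indeed power-law) divergent contribution to a convergent one.
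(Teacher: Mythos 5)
Your argument is essentially correct and rests on the same two mechanisms as the paper's proof: the one-derivative gain in the paraproduct commutator coming from $\|\nabla S_{j-1}\mathcal{W}\|_{L^\infty}\lesssim 2^{j(1-\gamma)}\|\mathcal{W}\|_{C^\gamma}$, and the divergence-free identity $W\cdot\nabla g=\divg(Wg)$, which trades the large input frequency for the small output frequency in the high--high block. The organization differs: the paper applies Bony's decomposition directly to $\mathcal{W}\cdot\nabla\big(m(D)f\big)$ and only needs a commutator argument for the piece $T_{\mathcal{W}\cdot\nabla}m(D)f$ (commuting $m(D)\widetilde{\phi}(2^{-j}D)$ past $S_{j-1}\mathcal{W}$ via its kernel), whereas you commute $m(D)$ globally past $W_k$ first and then Bony-decompose the commutator; these are equivalent reshufflings and all three blocks close as you describe. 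One characterization is off, though: the paper bounds $T_{\nabla m(D)f}\cdot\mathcal{W}$ and $R(\mathcal{W},\nabla m(D)f)$ term by term, so it is not true that ``neither $m(D)R(W_k,\partial_k f)$ nor $R(W_k,m(D)\partial_k f)$ lies in $B^{-\epsilon}_{p,r}$'' --- once $\divg W=0$ is used to write each resonant block as $\Delta_q\divg(\Delta_j W\,\widetilde{\Delta}_j m(D) f)$, each term is separately summable; the cancellation lives in the divergence form, not in the commutator.

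The one step that genuinely needs repair is the claim that $m(D)$ is bounded on $B^{-\epsilon}_{p,r}$ because its symbol is smooth away from the origin. For the \emph{inhomogeneous} Besov space this fails at the block $\Delta_{-1}$ when $p\in\{1,\infty\}$ (iterated Riesz transforms are not bounded on $L^\infty$, which is why the paper's estimates \eqref{eq:prod-es2-2} carry the extra term $\|\Delta_{-1}m(D)f\|_{L^p}$), and $p=\infty$ is precisely the case the paper uses. Hence $\|m(D)\partial_W f\|_{B^{-\epsilon}_{p,r}}\lesssim\|\partial_W f\|_{B^{-\epsilon}_{p,r}}$ is not free. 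The fix is the same cancellation again: $\Delta_{-1}m(D)\partial_W f=\Delta_{-1}m(D)\divg(Wf)$, and $\Delta_{-1}m(D)\divg$ \emph{is} bounded on $L^p$ for all $p\in[1,\infty]$; since $\|f\|_{L^p}$ is not controlled by $\|f\|_{B^{1-\gamma-\epsilon}_{p,r}}$ when $\gamma+\epsilon\geq1$, you must then run Bony on $Wf$ inside this low-frequency block to land on $\|\mathcal{W}\|_{C^\gamma}\|f\|_{B^{1-\gamma-\epsilon}_{p,r}}$. The analogous care is needed for $S_{q-1}\big(m(D)\partial_k f\big)$ at small $q$ in your high--low block, where the operator whose low-frequency boundedness you invoke should be $m(D)\partial_k$ rather than $m(D)$. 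With these low-frequency blocks patched, your proof is complete and equivalent in substance to the paper's.
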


\subsection{Useful lemmas}

We now state a useful commutator estimate.
\begin{lemma}\label{lem:comm-es}
Let $m(D)$ be a  pseudo differential operator defined as a Fourier multiplier with symbol $m(\xi)\in C^\infty(\R^d\backslash\{0\})$.
Let $p\in [2,\infty]$, $\widetilde{\mathcal{R}}_{-1}:= m(D) \Lambda^{-1}$ with $\Lambda=({-\Delta})^{1/2}$.
Assume that $v$ is a smooth divergence-free vector field of $\R^d$, and $\theta$ is a smooth scalar function. Then, we have
\begin{equation}\label{eq:comm-es1}
  \|[\widetilde{\mathcal{R}}_{-1}, v\cdot\nabla] \theta\|_{B^1_{p,\infty}} \leq C \big( \|\nabla v\|_{L^p} \|\theta\|_{B^0_{\infty,\infty}}
  + \|v\|_{L^2} \|\theta\|_{L^2} \big),
\end{equation}
with $C>0$ a constant depending on $p$ and $d$.
\end{lemma}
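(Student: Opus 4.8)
The plan is to set $A:=\widetilde{\mathcal R}_{-1}=m(D)\Lambda^{-1}$, a Fourier multiplier of order $-1$ whose convolution kernel $\mathcal K$ is homogeneous of degree $-(d-1)$, and to estimate the commutator block by block in the Littlewood--Paley decomposition, treating the high-frequency blocks $\Delta_q$ ($q\ge 0$) and the low block $\Delta_{-1}$ by different mechanisms. The guiding principle is that the target index $B^1_{p,\infty}$ and the input index $B^0_{\infty,\infty}$ differ by exactly one derivative (the order of $A$ compensates the order of $v\cdot\nabla$), so the estimate should be carried out in $\ell^\infty$ in frequency on both sides, comparing $\sup_{q}2^{q}\|\Delta_q(\cdot)\|_{L^p}$ with $\sup_q\|\Delta_q\theta\|_{L^\infty}$ block by block, with no summation over frequencies. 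First I would record the divergence-form identities $v\cdot\nabla\theta=\mathrm{div}(v\theta)$ and $v\cdot\nabla A\theta=\mathrm{div}(vA\theta)$, valid since $\mathrm{div}\,v=0$, so that $[A,v\cdot\nabla]\theta=\partial_k[A,v^k]\theta$; using $\mathrm{div}\,v=0$ once more this becomes the Calder\'on-commutator $\sum_k\int\partial_k\mathcal K(x-y)\big(v^k(y)-v^k(x)\big)\theta(y)\,\mathrm{d}y$, whose built-in difference quotient $v^k(y)-v^k(x)$ is precisely what lets $\nabla v$, rather than higher derivatives of $v$, appear.

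Then I would run Bony's decomposition on $v\cdot\nabla\theta$ and on $v\cdot\nabla A\theta$ and group the commutator into three pieces. The main piece is the paraproduct commutator $\sum_q[A,S_{q-1}v^k]\,\partial_k\Delta_q\theta$. Here a first-order Taylor expansion of the symbol of $A$ over the low-frequency factor $S_{q-1}v^k$ (the standard commutator estimate, in the spirit of \eqref{eq:prod-es5}) produces a gain of two frequency scales, giving the block bound $\|[A,S_{q-1}v^k]\partial_k\Delta_q\theta\|_{L^p}\lesssim 2^{-2q}\|\nabla v\|_{L^p}\,\|\partial_k\Delta_q\theta\|_{L^\infty}\lesssim 2^{-q}\|\nabla v\|_{L^p}\|\Delta_q\theta\|_{L^\infty}$. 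Since this contribution is spectrally localized near $2^q$, multiplying by the weight $2^q$ and taking $\sup_q$ yields exactly $\|\nabla v\|_{L^p}\|\theta\|_{B^0_{\infty,\infty}}$, which explains the stated pairing of indices. The remainder piece $\partial_k\big(A\,R(v^k,\theta)-R(v^k,A\theta)\big)$ is harmless: the high--high structure of $R$ makes each summand decay like $2^{-2q}\|\nabla v\|_{L^p}\|\theta\|_{B^0_{\infty,\infty}}$, so after applying $\partial_k$ and the smoothing of $A$ it lands in $B^1_{p,\infty}$ with the desired bound.

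The delicate piece, which I expect to be the main obstacle, is the reverse paraproduct, in which a low-frequency factor coming from $\theta$ multiplies a high-frequency factor coming from $v$. Splitting it naively produces the term $T_{m(D)\Lambda^{-1}\partial_k\theta}\,v^k$, and because the $-1$ smoothing of $A$ raises $\Lambda^{-1}\partial_k\theta$ to the borderline space $B^0_{\infty,\infty}$, this term must be handled by keeping the commutator structure intact and exploiting the order $-1$ gain of $A$, rather than by estimating the two factors separately. This is exactly the place where the precise order of $\widetilde{\mathcal R}_{-1}$ is used and where the analysis is most sensitive; it is also where the hypothesis $p\ge 2$ and the second term on the right-hand side will be essential.

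Finally, the genuinely low-output-frequency contributions must be isolated, because the multiplier $\Lambda^{-1}$ is singular near $\xi=0$. For the block $\Delta_{-1}$ I would pass to $L^2$ by Bernstein (since $p\ge2$, $\|\Delta_{-1}f\|_{L^p}\lesssim\|\Delta_{-1}f\|_{L^2}$), then estimate $\|\Delta_{-1}A\,\mathrm{div}(v\theta)\|_{L^2}\lesssim\|v\theta\|_{L^1}\le\|v\|_{L^2}\|\theta\|_{L^2}$ together with $\|\Delta_{-1}\big(v\cdot m(D)\nabla\Lambda^{-1}\theta\big)\|_{L^2}\lesssim\|v\|_{L^2}\,\|m(D)\nabla\Lambda^{-1}\theta\|_{L^2}\lesssim\|v\|_{L^2}\|\theta\|_{L^2}$, using that $\nabla\Lambda^{-1}$ is a bounded (order $0$) operator on $L^2$ and the low-frequency Bernstein inequality $\|\Delta_{-1}g\|_{L^2}\lesssim\|g\|_{L^1}$. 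Collecting the high-frequency bound furnished by the $\sup_q$ estimate with this low-frequency bound then gives the claimed inequality.
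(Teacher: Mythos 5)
The paper states Lemma \ref{lem:comm-es} without proof (it is an order $-1$ analogue of the commutator estimates of Hmidi--Keraani--Rousset, cf.\ \cite{HKR10}), so your plan has to stand on its own. Its skeleton is the standard and correct one: the reduction $[\widetilde{\mathcal R}_{-1},v\cdot\nabla]\theta=\partial_k[\widetilde{\mathcal R}_{-1},v^k]\theta$ via $\divg v=0$, the Bony splitting into a paraproduct commutator, a reverse paraproduct and a remainder, the block bound $2^{-q}\|\nabla v\|_{L^p}\|\Delta_q\theta\|_{L^\infty}$ for $[\widetilde{\mathcal R}_{-1},S_{q-1}v^k]\partial_k\Delta_q\theta$ obtained from the first-order Taylor expansion of the localized kernel, the geometric decay of the remainder, and the $L^2$ treatment of the output block $\Delta_{-1}$ (where $p\ge 2$ enters) are all accurate and would appear in any complete proof.

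The genuine gap is precisely at the point you yourself flag as ``the main obstacle'' and then do not carry out: the term $\widetilde{\mathcal R}_{-1}T_{\partial_k\theta}v^k-T_{\partial_k\widetilde{\mathcal R}_{-1}\theta}v^k$, equivalently $\sum_q[\widetilde{\mathcal R}_{-1},\Delta_qv\cdot\nabla]S_{q-1}\theta$. Saying that it ``must be handled by keeping the commutator structure intact and exploiting the order $-1$ gain'' is a statement of intent, not an estimate, and the intent as phrased does not obviously close. Indeed, even after retaining the commutator, the half $\Delta_qv^k\,\partial_k\widetilde{\mathcal R}_{-1}S_{q-1}\theta$ carries the factor $\|S_{q-1}(\partial_k\widetilde{\mathcal R}_{-1}\theta)\|_{L^\infty}\lesssim q\,\|\theta\|_{B^0_{\infty,\infty}}$, a logarithmic divergence that the order $-1$ gain cannot remove, because that gain has already been spent promoting $\partial_k\theta\in B^{-1}_{\infty,\infty}$ to the borderline space $B^0_{\infty,\infty}$ (the operator $\partial_k\widetilde{\mathcal R}_{-1}$ is of order $0$, and $B^0_{\infty,\infty}\not\hookrightarrow L^\infty$). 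Closing this term requires extracting the genuine cancellation between the two halves: one must keep the gradient on $S_{q-1}\theta$ so that $\|\nabla S_{q-1}\theta\|_{L^\infty}\lesssim 2^q\|\theta\|_{B^0_{\infty,\infty}}$ is a convergent geometric sum, replace the multiplier by its restriction to the relevant annuli so that its kernel carries the weight $2^{-q}2^{qd}h(2^q\cdot)$ against the pointwise difference $\Delta_qv(y)-\Delta_qv(x)$, and control separately the non-localized (low-frequency) residue of the symbol, which is a second place --- besides the block $\Delta_{-1}$ --- where the term $\|v\|_{L^2}\|\theta\|_{L^2}$ is genuinely needed. None of this appears in your proposal, so the one estimate that constitutes the real content of the lemma is missing.
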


Some basic facts on the particle-trajectory map are collected below (see {\it{e.g.}} Proposition 3.10 in \cite{BCD11}).
\begin{lemma}\label{lem:flow}
Assume that $v(x,t)$ is a velocity field belonging to $L^1([0,T], \dot W^{1,\infty}(\R^d))$. Let $\psi_t(x)$ be the the particle-trajectory generated by velocity $v$ which solves that
\begin{equation}\label{eq:flow1}
    \frac{\partial \psi_t(x)}{\partial t} = v ( \psi_t(x),t),\quad \psi_t(x)|_{t=0}=x,
\end{equation}
that is
\begin{equation}\label{eq:flow2}
  \psi_t(x) = x + \int_0^t v(\psi_\tau(x),\tau) \dd \tau.
\end{equation}
Then the system \eqref{eq:flow1} has a unique solution $\psi_t(\cdot):\R^d\mapsto \R^d$ on $[0,T]$ which is a volume-preserving bi-Lipschitzian homeomorphism and satisfies that
$\nabla \psi_t$ and its inverse $\nabla \psi^{-1}_t$
belong to $L^\infty([0,T]\times \R^d)$ with
\begin{equation}\label{DXest}
  \|\nabla \psi^{\pm1}_t\|_{L^\infty(\R^d)} \leq e^{\int_0^t \|\nabla v(\tau)\|_{L^\infty}\dd \tau}.
\end{equation}
In addition, the following statements hold true.
\begin{enumerate}[(1)]
\item If  $v\in L^1([0,T], C^{1,\gamma}(\R^d))$, then $\psi^{\pm1}_t \in L^\infty([0,T], C^{1,\gamma}(\R^d))$ with
\begin{equation}\label{X-C1gam-es}
  \|\nabla\psi^{\pm 1}_t\|_{C^\gamma} \lesssim  e^{(2+\gamma)\int_0^t \|\nabla v\|_{L^\infty} \dd \tau} \Big( 1 + \int_0^t \|\nabla v(\tau)\|_{C^\gamma} \dd \tau \Big).
\end{equation}
\item If  $v\in L^1([0,T], W^{2,\infty}(\R^d))$, then $\psi^{\pm1}_t \in L^\infty([0,T],  W^{2,\infty}(\R^d))$ with
\begin{equation}\label{X-W2inf-es}
  \|\nabla^2 \psi^{\pm 1}_t\|_{L^\infty} \lesssim  e^{3\int_0^t \|\nabla v\|_{L^\infty} \dd \tau} \int_0^t \|\nabla^2 v(\tau)\|_{L^\infty} \dd \tau.
\end{equation}
\end{enumerate}
\end{lemma}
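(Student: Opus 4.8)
The plan is to establish the existence, uniqueness, and the $L^\infty$ gradient bound \eqref{DXest} first, and then to obtain the refined $C^{1,\gamma}$ and $W^{2,\infty}$ estimates by differentiating the flow equation in the space variable and running Gr\"onwall arguments on the resulting linear variational systems. First I would note that since $v\in L^1([0,T],\dot W^{1,\infty})$, each $v(\cdot,t)$ is Lipschitz in $x$ with Lipschitz constant $\|\nabla v(t)\|_{L^\infty}\in L^1_t$, so the Cauchy--Lipschitz theorem for nonautonomous ODEs with an $L^1$-in-time Lipschitz bound yields a unique flow $\psi_t$ on all of $[0,T]$; the integrability of the Lipschitz constant rules out finite-time blow-up. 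Differentiating \eqref{eq:flow2} in $x$ shows that $A_t(x):=\nabla\psi_t(x)$ solves the linear matrix ODE $\partial_t A_t = (\nabla v)(\psi_t,t)\,A_t$ with $A_0=\Id$, so Gr\"onwall's lemma gives $\|A_t\|_{L^\infty}\le \exp\big(\int_0^t\|\nabla v(\tau)\|_{L^\infty}\,\dd\tau\big)$. The inverse map $\psi_t^{-1}$ is the flow of the time-reversed field and obeys the same bound, which yields \eqref{DXest} and the bi-Lipschitz property; and since $v$ is divergence-free, Liouville's formula $\partial_t\det A_t = (\divg v)(\psi_t)\det A_t = 0$ forces $\det A_t\equiv 1$, so $\psi_t$ is volume-preserving.

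For part (1) I would estimate the H\"older seminorm of $A_t$. Writing the variational ODE for the difference $A_t(x)-A_t(y)$ and splitting its right-hand side as $(\nabla v)(\psi_t(x))\big(A_t(x)-A_t(y)\big) + \big[(\nabla v)(\psi_t(x))-(\nabla v)(\psi_t(y))\big]A_t(y)$, I bound the second term by $\|\nabla v(\tau)\|_{C^\gamma}|\psi_\tau(x)-\psi_\tau(y)|^\gamma\|A_\tau\|_{L^\infty}\le \|\nabla v(\tau)\|_{C^\gamma}\|A_\tau\|_{L^\infty}^{1+\gamma}|x-y|^\gamma$, using the Lipschitz control $|\psi_\tau(x)-\psi_\tau(y)|\le \|A_\tau\|_{L^\infty}|x-y|$. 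Dividing by $|x-y|^\gamma$ and inserting $\|A_\tau\|_{L^\infty}^{1+\gamma}\le e^{(1+\gamma)\int_0^\tau\|\nabla v\|_{L^\infty}}$ from the first step, Gr\"onwall's lemma applied to $g(t):=[\nabla\psi_t]_{C^\gamma}$ produces the factor $e^{(2+\gamma)\int_0^t\|\nabla v\|_{L^\infty}}\int_0^t\|\nabla v(\tau)\|_{C^\gamma}\,\dd\tau$; adding the $L^\infty$ bound on $A_t$ recovers the ``$1+\int_0^t\|\nabla v\|_{C^\gamma}$'' factor and hence \eqref{X-C1gam-es}. The same computation for the backward flow handles $\psi_t^{-1}$.

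For part (2) I would differentiate the flow equation once more: $B_t:=\nabla^2\psi_t$ satisfies $\partial_t B_t = (\nabla v)(\psi_t)\,B_t + (\nabla^2 v)(\psi_t)\,(A_t\otimes A_t)$ with $B_0=0$. Bounding the inhomogeneous term by $\|\nabla^2 v(\tau)\|_{L^\infty}\|A_\tau\|_{L^\infty}^2\le \|\nabla^2 v(\tau)\|_{L^\infty}\,e^{2\int_0^\tau\|\nabla v\|_{L^\infty}}$ and running Gr\"onwall once more gives $\|\nabla^2\psi_t\|_{L^\infty}\le e^{3\int_0^t\|\nabla v\|_{L^\infty}}\int_0^t\|\nabla^2 v(\tau)\|_{L^\infty}\,\dd\tau$, which is \eqref{X-W2inf-es}; the inverse map is treated identically.

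The calculations above are essentially routine, so the one point requiring genuine care is the justification that the flow is actually once (resp. twice) differentiable in $x$, so that the variational ODEs for $A_t$ and $B_t$ hold in a classical sense, when $v$ is only $W^{1,\infty}$ (resp. $W^{2,\infty}$) in space rather than smooth. I expect to handle this by a standard mollification argument: regularize $v$ in space, derive the above bounds for the smooth flows with constants depending only on the stated norms, and pass to the limit using the uniform estimates together with the stability of the flow map. The only mildly delicate bookkeeping is tracking the precise exponents $(2+\gamma)$ and $3$ in the exponential prefactors, which arise respectively from one Gr\"onwall factor plus the $(1+\gamma)$-th power of $\|A_\tau\|_{L^\infty}$, and from one Gr\"onwall factor plus the square of $\|A_\tau\|_{L^\infty}$.
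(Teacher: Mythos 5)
Your proposal is correct and follows essentially the standard argument: the paper does not prove this lemma itself but cites Proposition 3.10 of \cite{BCD11}, and the proof there is exactly the Cauchy--Lipschitz plus Gr\"onwall-on-the-variational-equations scheme you describe (your exponents $(1+\gamma)+1=2+\gamma$ and $2+1=3$ are even slightly generous, which is fine since the claims are stated with $\lesssim$). The only remark worth making is that the volume-preservation assertion genuinely requires $\divg v=0$, which you correctly invoke via Liouville's formula but which is not listed among the lemma's explicit hypotheses; it is implicit from the paper's context, where $v$ is always solenoidal.
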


We have the following  estimates for  the transport and transport-diffusion equations
(one can see \cite{BCD11} for the proof of \eqref{es.sm1}-\eqref{eq:T-sm3}
and see \cite{ChMX21}, for the proof of \eqref{TD-sm-es}).
\begin{lemma}\label{lem:TD-sm2}
  Assume $(\rho, r,p)\in [1,\infty]^3$  and $-1< s < 1$. Let $u$ be a smooth divergence-free vector field and $\phi$ be a smooth function solving the following transport equation
\begin{equation}\label{eq:TD-eq2}
  \partial_t \phi + u\cdot\nabla \phi - \nu \Delta \phi = f,\quad \phi|_{t=0}(x)=\phi_0(x),\quad x\in \R^d.
\end{equation}
The following statements hold.
\begin{enumerate}[(1)]
\item If $\nu>0$, then there exists a constant $C$ which depends on $d$ and $s$ such that for any $t>0$,
\begin{equation}\label{es.sm1}
  \nu^{\frac{1}{\rho}} \|\phi\|_{\widetilde{L}^\rho_t (B^{s+ \frac{2}{\rho}}_{p,r})} \leq C (1+\nu t)^{\frac{1}{\rho}}
  \big( \|\phi_0\|_{B^{s}_{p,r}} + \|f\|_{\widetilde{L}^1_t (B^{s}_{p,r})}  + \int_0^t \|\nabla u(\tau)\|_{L^\infty} \|\phi(\tau)\|_{B^{s}_{p,r}} \dd \tau \big).
\end{equation}

\item
If $\nu=0$, then there exists a constant $C$ which depends on $d$ and $s$ such that for any $t>0$,
\begin{equation}\label{eq:T-sm2}
  \|\phi\|_{L^\infty_t (B^{s}_{p,r})} \leq C \big( \|\phi_0\|_{B^{s}_{p,r}} + \|f\|_{L^1_t B^{s}_{p,r}} + \int_0^t \|\nabla u(\tau)\|_{L^\infty} \|\phi(\tau)\|_{ B^{s}_{p,r}} \dd \tau \big),
\end{equation}
and
\begin{equation}\label{eq:T-sm3}
  \|\phi\|_{L^\infty_t (B^{s}_{p,r})} \leq C e^{C \int_0^t \|\nabla u(\tau)\|_{L^\infty} \dd \tau} \Big( \|\phi_0\|_{B^{s}_{p,r}} + \|f\|_{L^1_t (B^{s}_{p,r})} \Big).
\end{equation}
\item If $\nu>0$, then there exists a constant $C$ which depends on $d$ and $s$ such that for any $t>0$,
\begin{equation}\label{TD-sm-es}
  \sup_{q\in \N} 2^{\frac{2 q}{\rho}} \|\Delta_q \phi\|_{L^\rho_t (L^p)} \leq C \Big(\sup_{q\in \N}\|\Delta_q \phi_0\|_{L^p} + \int_0^t\|\nabla u\|_{L^p} \|\phi\|_{B^0_{\infty,\infty}} \dd \tau + \|f\|_{L^1_t (L^p)} \Big) .
\end{equation}

\end{enumerate}
\end{lemma}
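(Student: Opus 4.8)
The plan is to reduce all three estimates to a single frequency-localized analysis. Applying the Littlewood-Paley operator $\Delta_q$ to \eqref{eq:TD-eq2} and writing $\phi_q := \Delta_q \phi$, one obtains
\begin{equation*}
  \partial_t \phi_q + u\cdot\nabla \phi_q - \nu\Delta\phi_q = \Delta_q f + R_q, \qquad R_q := [u\cdot\nabla, \Delta_q]\phi .
\end{equation*}
Testing against $|\phi_q|^{p-2}\phi_q$ and using $\divg u = 0$ to annihilate the transport contribution, together with the Bernstein-type lower bound $-\int \Delta\phi_q\,|\phi_q|^{p-2}\phi_q\,\dd x \gtrsim 2^{2q}\|\phi_q\|_{L^p}^p$ valid for functions spectrally supported in an annulus of size $2^q$, yields
\begin{equation*}
  \frac{\dd}{\dd t}\|\phi_q\|_{L^p} + c\,\nu\, 2^{2q}\|\phi_q\|_{L^p} \leq \|\Delta_q f\|_{L^p} + \|R_q\|_{L^p}.
\end{equation*}
I would take this as the common starting point for both $\nu>0$ and $\nu=0$.

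For the smoothing estimates \eqref{es.sm1} and \eqref{TD-sm-es} (case $\nu>0$), Gronwall's lemma gives
\begin{equation*}
  \|\phi_q(t)\|_{L^p} \leq e^{-c\nu 2^{2q}t}\|\Delta_q\phi_0\|_{L^p} + \int_0^t e^{-c\nu 2^{2q}(t-\tau)}\big(\|\Delta_q f\|_{L^p} + \|R_q\|_{L^p}\big)\dd\tau.
\end{equation*}
Taking the $L^\rho_t$-norm and applying Young's inequality in time to the convolution against $e^{-c\nu 2^{2q}\cdot}$ (whose $L^\rho_t$-norm is $\lesssim (\nu 2^{2q})^{-1/\rho}(1+\nu t)^{1/\rho}$) produces the gain of $2^{-2q/\rho}\nu^{-1/\rho}$; multiplying by $2^{q(s+2/\rho)}$ and summing in $\ell^r$ yields \eqref{es.sm1}, while instead taking a supremum in $q$ with $s=0$ and absorbing the commutator into the $\|\nabla u\|_{L^p}\|\phi\|_{B^0_{\infty,\infty}}$ term gives \eqref{TD-sm-es}. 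For the transport case \eqref{eq:T-sm2} ($\nu=0$) one simply drops the dissipative factor, integrates in time, multiplies by $2^{qs}$ and sums, and then \eqref{eq:T-sm3} follows by feeding the commutator bound into Gronwall's inequality to generate the exponential factor $e^{C\int_0^t\|\nabla u\|_{L^\infty}\dd\tau}$.

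The crux in every case is the commutator estimate $2^{qs}\|R_q\|_{L^p} \leq c_q\,\|\nabla u\|_{L^\infty}\,\|\phi\|_{B^s_{p,r}}$ with $\|\{c_q\}\|_{\ell^r}\leq 1$ (and its $\|\nabla u\|_{L^p}\|\phi\|_{B^0_{\infty,\infty}}$ variant for \eqref{TD-sm-es}). I would establish this by expanding $R_q$ through Bony's decomposition into paraproduct and remainder pieces; the leading term $[T_{u^k},\Delta_q]\partial_k\phi$ is handled by a first-order Taylor expansion of the kernel of $\Delta_q$, which trades one derivative to convert $u$ into $\|\nabla u\|_{L^\infty}$ while keeping $\phi$ at regularity $s$. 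Verifying the $\ell^r$-summability of the remaining paraproduct and remainder contributions is precisely where the restriction $-1<s<1$ enters: the low-frequency and paraproduct terms converge for $s>-1$, and the remainder terms for $s<1$. This commutator analysis, rather than the energy estimate, is the main obstacle; once it is in hand, the three estimates follow from the bookkeeping above, as carried out in \cite{BCD11,ChMX21}.
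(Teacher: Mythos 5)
The paper does not prove this lemma itself: it simply cites \cite{BCD11} for \eqref{es.sm1}--\eqref{eq:T-sm3} and \cite{ChMX21} for \eqref{TD-sm-es}, and your outline (frequency localization, $L^p$ energy estimate with the generalized Bernstein lower bound, Duhamel plus Young's inequality in time, and the commutator bound $2^{qs}\|[u\cdot\nabla,\Delta_q]\phi\|_{L^p}\le Cc_q\|\nabla u\|_{L^\infty}\|\phi\|_{B^s_{p,r}}$ valid precisely for $-1<s<1$) is exactly the standard argument carried out in those references. The only step you gloss over is that the pointwise lower bound $-\int\Delta\phi_q\,|\phi_q|^{p-2}\phi_q\,\dd x\gtrsim 2^{2q}\|\phi_q\|_{L^p}^p$ is proved for $1<p<\infty$ and the endpoints $p\in\{1,\infty\}$ require the usual limiting or semigroup argument, which is routine and handled in the cited sources.
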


We shall need to use the following smoothing estimates of nonhomogeneous heat equation (see {\it{e.g.}} \cite{BCD11} for the proof).

\begin{lemma}\label{lem:heat-Besov}
  Let $T>0$, $s\in \R$ and $1\leq \rho_1, p,r \leq \infty$. Assume that $v_0 \in \dot B^s_{p,r}(\R^d)$
and $f\in \widetilde{L}^{\rho_1}_T(\dot B^{s-2 + \frac{2}{\rho_1}}_{p,r})$.
Then the nonhomogeneous heat equation
\begin{equation}\label{eq:heat}
  \partial_t v - \Delta v =f,\qquad v|_{t=0} =v_0,
\end{equation}
has a unique solution $v$ in
$\widetilde{L}^{\rho_1}_T(\dot B^{s+\frac{2}{\rho_1}}_{p,r})\cap \widetilde{L}^\infty_T(\dot B^s_{p,r})$ and there exists a constant $C=C(d)>0$
such that for all $\rho \in [\rho_1,\infty]$,
\begin{align}\label{eq:heat-Besov}
  \|v\|_{\widetilde{L}^\rho_T(\dot B^{s+ \frac{2}{\rho}}_{p,r})}
  \leq C \Big(\|v_0\|_{\dot B^s_{p,r}} + \|f\|_{\widetilde{L}^{\rho_1}_T(\dot B^{s-2+\frac{2}{\rho_1}}_{p,r})}\Big).
\end{align}
In particular, if $v_0\in \dot H^s$ and $f \in L^2_T (\dot H^{s-1})$, we have
\begin{align}\label{eq:e-tDel-es}
  \|v\|_{L^\rho_T(\dot H^{s+ \frac{2}{\rho}})} & \leq \|v_0\|_{\dot H^s} + \|f\|_{L^2_T(\dot H^{s-1})},\quad \forall \rho\in [2,\infty].
\end{align}
\end{lemma}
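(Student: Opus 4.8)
The plan is to reduce everything to dyadic blocks and to the explicit smoothing of the heat flow on frequency annuli. Applying $\dot\Delta_q$ to \eqref{eq:heat} and using that $\dot\Delta_q$ commutes with $\partial_t-\Delta$, Duhamel's formula gives $\dot\Delta_q v(t)=e^{t\Delta}\dot\Delta_q v_0+\int_0^t e^{(t-\tau)\Delta}\dot\Delta_q f(\tau)\,\dd\tau$. The key ingredient is the standard heat-smoothing estimate on dyadic shells, a consequence of Bernstein's inequality together with the explicit kernel of $e^{t\Delta}$ (see \cite{BCD11}): there are absolute constants $c,C>0$ such that $\|e^{t\Delta}\dot\Delta_q g\|_{L^p}\le C e^{-c t 2^{2q}}\|\dot\Delta_q g\|_{L^p}$ for every $q\in\Z$ and $t\ge 0$. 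First I would insert this into the two terms above to obtain the pointwise-in-frequency bound $\|\dot\Delta_q v(t)\|_{L^p}\lesssim e^{-ct2^{2q}}\|\dot\Delta_q v_0\|_{L^p}+\int_0^t e^{-c(t-\tau)2^{2q}}\|\dot\Delta_q f(\tau)\|_{L^p}\,\dd\tau$.

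Next I would take the $L^\rho([0,T])$ norm in time of each piece and weight by $2^{q(s+2/\rho)}$. For the initial-data part, $\|e^{-c t 2^{2q}}\|_{L^\rho_T}\lesssim 2^{-2q/\rho}$, so that $2^{q(s+2/\rho)}\|e^{t\Delta}\dot\Delta_q v_0\|_{L^\rho_T(L^p)}\lesssim 2^{qs}\|\dot\Delta_q v_0\|_{L^p}$; taking $\ell^r$ in $q$ reproduces $\|v_0\|_{\dot B^s_{p,r}}$. The Duhamel part is a time-convolution with the kernel $K_q(t)=e^{-ct2^{2q}}\mathds{1}_{t\ge 0}$, and Young's inequality gives $\big\|K_q\ast\|\dot\Delta_q f\|_{L^p}\big\|_{L^\rho_T}\le \|K_q\|_{L^m}\,\|\dot\Delta_q f\|_{L^{\rho_1}_T(L^p)}$ with $1+\frac1\rho=\frac1m+\frac1{\rho_1}$, which is exactly where the hypothesis $\rho\ge\rho_1$ enters (it forces $m\ge 1$). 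Since $\|K_q\|_{L^m}\lesssim 2^{-2q/m}$ and $\frac2m=2+\frac2\rho-\frac2{\rho_1}$, multiplying by $2^{q(s+2/\rho)}$ produces precisely the weight $2^{q(s-2+2/\rho_1)}$, and summing in $\ell^r_q$ yields $\|f\|_{\widetilde{L}^{\rho_1}_T(\dot B^{s-2+2/\rho_1}_{p,r})}$. Combining the two contributions gives \eqref{eq:heat-Besov}. Existence then follows because the Duhamel formula defines a function obeying this bound, hence lying in the stated space; uniqueness is immediate from linearity, since a difference of two solutions solves the homogeneous problem with zero data and the estimate forces it to vanish.

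For the particular case \eqref{eq:e-tDel-es} I would argue directly on the Fourier side, both because the clean constant $1$ is then available and because the genuine norm $L^\rho_T(\dot H^{s+2/\rho})$, rather than the Chemin--Lerner norm, is required. Writing $\hat v(t,\xi)=e^{-t|\xi|^2}\hat v_0(\xi)+\int_0^t e^{-(t-\tau)|\xi|^2}\hat f(\tau,\xi)\,\dd\tau$, the homogeneous part is controlled by Minkowski's integral inequality in the form $\big\|\int|\xi|^{2s+4/\rho}e^{-2t|\xi|^2}|\hat v_0|^2\,\dd\xi\big\|_{L^{\rho/2}_t}\le \int |\xi|^{2s+4/\rho}|\hat v_0|^2\|e^{-2t|\xi|^2}\|_{L^{\rho/2}_t}\,\dd\xi$, where $\|e^{-2t|\xi|^2}\|_{L^{\rho/2}_t}\le \rho^{-2/\rho}|\xi|^{-4/\rho}$; this collapses the weight to $|\xi|^{2s}$ and gives the bound $\rho^{-1/\rho}\|v_0\|_{\dot H^s}\le\|v_0\|_{\dot H^s}$. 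The Duhamel part is handled per frequency by Young in time with $\frac1m=\frac12+\frac1\rho$ (again needing $\rho\ge 2=\rho_1$); the exponents arrange so that $\|K_\xi\|_{L^m_t}$ is independent of $\xi$, and Minkowski's inequality $\big\|\,\|\cdot\|_{L^2_\xi}\big\|_{L^\rho_t}\le \big\|\,\|\cdot\|_{L^\rho_t}\big\|_{L^2_\xi}$ (valid precisely for $\rho\ge 2$) lets one pass from the per-frequency $L^\rho_t$ bound back to the full norm, producing $\|f\|_{L^2_T(\dot H^{s-1})}$ with constant one.

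The main obstacle is twofold. First, the exponent bookkeeping in Young's inequality must be tracked exactly so that the gain $2^{-2q/m}$ converts the weight $2^{q(s+2/\rho)}$ into the sharp $2^{q(s-2+2/\rho_1)}$; this is routine but must be done with care, and it is here that the admissibility condition $\rho\ge\rho_1$ is seen to be necessary. Second, in the particular case one cannot simply invoke \eqref{eq:heat-Besov} through an embedding, because for $r=2$ the embedding between $\widetilde{L}^\rho_T(\dot B^{\cdot}_{2,2})$ and $L^\rho_T(\dot H^{\cdot})$ runs the wrong way when $\rho>r$; this is exactly why the direct $L^2$ computation above, relying on Minkowski's integral inequality in the correct direction, is required to obtain the strong norm with the unit constant.
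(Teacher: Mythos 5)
The paper offers no proof of this lemma; it is quoted directly from \cite{BCD11}, and your argument is precisely the standard proof from that reference (localize with $\dot\Delta_q$, write Duhamel, use the shell estimate $\|e^{t\Delta}\dot\Delta_q g\|_{L^p}\le Ce^{-ct2^{2q}}\|\dot\Delta_q g\|_{L^p}$, and apply Young's inequality in time with $1+\frac1\rho=\frac1m+\frac1{\rho_1}$), so it is correct and there is nothing in the paper to contrast it with. Two small inaccuracies in your commentary, neither of which affects the validity of the proof: first, $\|K_\xi\|_{L^m_t}=(m|\xi|^2)^{-1/m}$ is \emph{not} independent of $\xi$ --- what is true is that after multiplying by the weight $|\xi|^{s+2/\rho}$ the product collapses exactly to $m^{-1/m}|\xi|^{s-1}$, which is what you need; second, the embedding $\widetilde{L}^\rho_T(\dot B^s_{2,2})\hookrightarrow L^\rho_T(\dot H^s)$ does hold for $\rho\ge 2=r$ (Minkowski's inequality places the smaller exponent outermost), so \eqref{eq:e-tDel-es} with some constant $C(d)$ would already follow from \eqref{eq:heat-Besov}; the direct Fourier-side computation you perform is needed only to obtain the clean constant $1$, not because the embedding fails.
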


Finally, we recall a product estimate used in the uniqueness part which may be found in \cite{DanPa08}.
\begin{lemma}\label{lem:ProdEs-uni}
  Let $(\alpha_q)_{q\in \mathbb{Z}}$ be a sequence of nonnegative functions over $[0,T]$, and let $s_1,s_2,p$ satisfy
\begin{align}\label{eq:s1s2-cd}
  1\leq p\leq \infty,\quad \frac{d}{p} + 1 >s_1,\quad \frac{d}{p} > s_2, \quad \textrm{and}\quad s_1+ s_2 > d \max\Big\{0,\frac{2}{p}-1 \Big\}.
\end{align}
Assume that for all $q'\geq q$ and $t\in [0,T]$, we have $0\leq \alpha_{q'}(t) -\alpha_q(t) \leq \frac{1}{2} \big(s_1+ s_2 + N \min\{0,1-\frac{2}{p}\}\big)(q'-q)$. Then for all $r\in [1,\infty]$, there exists a constant $C$ depending only on $s_1,s_2,N$ and
$p$ such that for all functions $b$ and solenoidal vector field $a$ over $\R^d$, the following estimate holds for all $t\in [0,T]$:
\begin{align*}
  \sup_{q\in \mathbb{Z}} \int_0^t 2^{q(s_1 + s_2 -1 -\frac{N}{p}) -\alpha_q(\tau)} \|\dot\Delta_q \mathrm{divg}(ab)\|_{L^p}\dd \tau
  \leq C \|b\|_{\widetilde{L}^r_t(\dot B^{s_1}_{p,\infty})} \sup_{q\in \mathbb{Z}}
  \big\|2^{q s_2 -\alpha_q} \|\dot{\Delta}_q a\|_{L^p}\big\|_{L^{r'}_t}.
\end{align*}
\end{lemma}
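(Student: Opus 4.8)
The plan is to run Bony's decomposition on the product and to track, dyadic block by dyadic block, the extra weight $2^{-\alpha_q(\tau)}$. Throughout set $\beta:=s_1+s_2+N\min\{0,1-\tfrac{2}{p}\}$ (here $N=d$ is the space dimension), which is \emph{positive} precisely by the last condition in \eqref{eq:s1s2-cd}; the hypothesis on $(\alpha_q)$ then reads $0\le \alpha_{q'}-\alpha_q\le\tfrac{\beta}{2}(q'-q)$ for $q'\ge q$, so $(\alpha_q)$ is nondecreasing in $q$ with slope at most $\beta/2$. It is convenient to normalize $a_q(\tau):=2^{qs_2-\alpha_q(\tau)}\|\dot\Delta_q a(\tau)\|_{L^p}$ and $b_q(\tau):=2^{qs_1}\|\dot\Delta_q b(\tau)\|_{L^p}$, so that the right-hand side of the claim is exactly $C\,(\sup_q\|b_q\|_{L^r_t})(\sup_q\|a_q\|_{L^{r'}_t})$. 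Writing $ab=T_ab+T_ba+R(a,b)$ and applying $\dot\Delta_q\divg$, I would estimate the three contributions to $\sup_q\int_0^t 2^{q(s_1+s_2-1-N/p)-\alpha_q}\|\dot\Delta_q\divg(\cdot)\|_{L^p}\dd\tau$ separately. The divergence-free constraint on $a$ is used in exactly one place: in the high–low term $T_ba=\sum_j S_{j-1}b\,\dot\Delta_j a$ I would transfer the derivative onto $b$, using $\divg(S_{j-1}b\,\dot\Delta_j a)=\nabla S_{j-1}b\cdot\dot\Delta_j a$ since $\divg\dot\Delta_j a=0$; for the other two pieces I keep $\divg$ outside and spend the resulting $2^q$ via Bernstein.

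For the low–high term $T_ab$, Bernstein ($L^p\to L^\infty$ on $S_{j-1}a$) gives $\|S_{j-1}a\|_{L^\infty}\lesssim\sum_{j''\le j-2}2^{j''(N/p-s_2)+\alpha_{j''}}a_{j''}$, and the restriction $s_2<N/p$ makes this a convergent, top-dominated sum; after multiplying by the weight and using $|q-j|\le 4$, the block reduces to $\sum_{j''\lesssim q}2^{(j''-q)(N/p-s_2)}\,2^{\alpha_{j''}-\alpha_q}\,a_{j''}b_q$, where $2^{\alpha_{j''}-\alpha_q}\le 1$ comes for free from the monotonicity of $(\alpha_q)$ since $j''\le q$. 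Similarly, for $T_ba$, after the divergence transfer one has $\|\nabla S_{j-1}b\|_{L^\infty}\lesssim\sum_{j''\le j-2}2^{j''(1+N/p-s_1)}b_{j''}$, which converges precisely because $s_1<N/p+1$; this is exactly where the weaker constraint $s_1<N/p+1$ (instead of $s_1<N/p$) is purchased by the divergence-free structure. Again only the monotonicity of $(\alpha_q)$ is needed, because the summed index sits at low frequency.

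The remainder $R(a,b)=\sum_j\dot\Delta_j a\,\widetilde{\Delta}_j b$ is the crux. Keeping $\divg$ outside, estimating the product in $L^{p/2}$, and then applying the band-limited (Nikolskii-type) Bernstein inequality $L^{p/2}\to L^p$ — which remains valid on the frequency-localized pieces even when $p<2$, at the cost $2^{qN/p}$ — the block reduces, after inserting the weight, to $\sum_{j\ge q-4}2^{(q-j)(s_1+s_2)}\,2^{\alpha_j-\alpha_q}\,a_jb_j$. Here the summation index $j$ runs \emph{above} $q$, so the weight ratio can be as large as $2^{\alpha_j-\alpha_q}\le 2^{\frac{\beta}{2}(j-q)}$, and the genuine regularity gain $2^{-(s_1+s_2)(j-q)}$ must dominate it. The net exponent is $-(s_1+s_2-\tfrac{\beta}{2})(j-q)$, with $s_1+s_2-\tfrac{\beta}{2}=\tfrac12(s_1+s_2)+\tfrac{N}{2}\max\{0,\tfrac{2}{p}-1\}>0$ because $s_1+s_2>0$ (itself guaranteed by $\beta>0$). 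This is the one point where the precise growth hypothesis $\alpha_{q'}-\alpha_q\le\tfrac{\beta}{2}(q'-q)$ is indispensable: the slope $\beta/2$ is calibrated so that half of the available regularity $s_1+s_2$ is consumed by the weight and half survives to ensure geometric convergence.

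Finally, in each of the three cases the inner sum is a convergent geometric series in the difference of indices, uniformly in $q$ and $\tau$; integrating in time and applying Hölder with the conjugate pair $(r,r')$ separates the factors as $\int_0^t a_{j''}(\tau)b_{j}(\tau)\dd\tau\le\|a_{j''}\|_{L^{r'}_t}\|b_j\|_{L^r_t}\le(\sup_q\|a_q\|_{L^{r'}_t})(\sup_q\|b_q\|_{L^r_t})$. Summing the contributions — finitely many in the near-diagonal directions, geometric in the off-diagonal ones — yields the stated bound with a constant depending only on $s_1,s_2,N,p$. I expect the bookkeeping for the remainder to be the main obstacle: correctly placing the derivative (outside for $R$ and $T_ab$, transferred for $T_ba$) and verifying that $\beta/2$ is exactly the slope keeping the high-frequency series summable. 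The two paraproduct terms are routine once the normalization and the monotonicity of $(\alpha_q)$ are in place.
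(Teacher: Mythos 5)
The paper does not actually prove this lemma (it is quoted verbatim from \cite{DanPa08}), so the comparison is with the standard argument there, whose outline your proposal reproduces: Bony decomposition, weighted block-by-block estimates using the monotonicity of $(\alpha_q)$ for the two paraproducts and the slope bound for the remainder, then H\"older in time with the pair $(r,r')$. Your treatment of $T_ab$ (via $s_2<d/p$), of $T_ba$ (transferring the divergence onto $b$ through $\divg a=0$, which is what buys $s_1<d/p+1$), and the final H\"older/geometric-summation step are correct.

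The gap is in the remainder for $1\le p<2$. You assert $\|\dot\Delta_q(\dot\Delta_j a\,\widetilde{\Delta}_j b)\|_{L^p}\lesssim 2^{qN/p}\|\dot\Delta_j a\|_{L^p}\|\widetilde{\Delta}_j b\|_{L^p}$, i.e.\ a Bernstein step $L^{p/2}\to L^p$ paying only the frequency $2^q$ of the \emph{output} block. For $p/2<1$ this fails: $\dot\Delta_q$ is not bounded on the quasi-normed $L^{p/2}$, and a concentration test (both blocks bumps at scale $2^{-j}$, $j\gg q$, modulated to opposite frequencies so the product has a low-frequency part) shows the ratio of the two sides grows like $2^{(j-q)N(2/p-1)}$. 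The correct route for $p<2$ passes through $L^1$: H\"older plus Bernstein on the ball-supported product give $\|\dot\Delta_j a\,\widetilde{\Delta}_j b\|_{L^1}\lesssim 2^{jN(2/p-1)}\|\dot\Delta_j a\|_{L^p}\|\widetilde{\Delta}_j b\|_{L^p}$, then $\dot\Delta_q$ is bounded on $L^1$ and Bernstein $L^1\to L^p$ on the output annulus costs $2^{qN(1-1/p)}$. With this the decay rate of the high-frequency series is not $s_1+s_2$ but exactly $\beta=s_1+s_2-N(2/p-1)$; the slope hypothesis removes $\beta/2$ of it and $\beta/2>0$ survives. This is precisely why the term $N\min\{0,1-2/p\}$ appears in $\beta$ at all, a point your calibration (``half of $s_1+s_2$ is consumed'') misses for $p<2$. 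The lemma is true and your argument is complete for $p\ge2$ — which covers the only use made of it in this paper, namely $p\in(3,6)$ — but as written the remainder estimate does not close on the full stated range $1\le p\le\infty$.
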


\section{Existence and uniqueness result of 3D Boussinesq system \eqref{eq:BousEq} with $v_0\in H^{1/2}$}\label{sec:exi-uni}

The proof is split into three classical steps. \\

$\bullet$ Step 1: local existence of solution. \\

First we prove the \textit{a priori} estimates.
From the transport equation $\eqref{eq:BousEq}$, we have that for all $s\in (3,\infty]$, the $L^1\cap L^s$ norm are preserved along the evolution, that is
\begin{equation}\label{es0.thet}
  \Vert \theta(t) \Vert_{L^1\cap L^s } \leq \Vert\theta_0\Vert_{L^1\cap L^s},\quad \forall t\geq 0.
\end{equation}
Now we focus on the estimates for the velocity $v$. The basic $L^2$-energy estimate for the second  equation in $\eqref{eq:BousEq}$ gives
\begin{align*}
  \frac{1}{2}\frac{\dd }{\dd t} \|v\|_{L^2}^2 + \|\nabla v\|_{L^2}^2  \leq \Big|\int_{\R^3} v^3\, \theta \dd x \Big| \leq \|v\|_{L^2} \|\theta\|_{L^2} \leq \|v\|_{L^2} \|\theta_0\|_{L^2},
\end{align*}
which implies that $\|v\|_{L^\infty_T(L^2)} \leq \|v_0\|_{L^2} + T \|\theta_0\|_{L^2}$ and
\begin{equation}\label{es.v.L2}
  \|v\|_{L^\infty_T(L^2)}^2 + \|\nabla v\|_{L^2_T(L^2)}^2 \leq 4 (1+T^2) (\|v_0\|_{L^2}^2 + \|\theta_0\|_{L^2}^2).
\end{equation}

Set $v= e^{t\Delta}v_0 + w$ where $e^{t\Delta}$ stand for the heat semigroup, then one has that
\begin{align}\label{eq:w}
  \partial_t w - \Delta w = - \mathbb{P}\big((w + e^{t\Delta} v_0)\cdot\nabla( w + e^{t\Delta} v_0 )\big) + \mathbb{P}(\theta e_3),\quad w|_{t=0} =0,
\end{align}
where $\mathbb{P} = \Id - \nabla \Delta^{-1} \divg$ is the Leray projection operator.
By multiplying both sides of equation \eqref{eq:w} by $\Lambda w$ and integrating in the space variable, one finds

 (as in \cite{GGJ20} or \eqref{eq:v-H-1/2} below)
\begin{align*}
  \frac{1}{2} \frac{\dd}{\dd t} \|w\|_{\dot H^{\frac{1}{2}}}^2 + \|w\|_{\dot H^{\frac{3}{2}}}^2
  & \leq \Big|\int_{\R^3} \Lambda w \cdot \mathbb{P}\big((w + e^{t\Delta} v_0)\cdot\nabla( w + e^{t\Delta} v_0 )\big) \dd x \Big|
  + \Big| \int_{\R^3} \Lambda w \cdot \mathbb{P}(\theta e_3) \dd x \Big| \\
  & \leq \|w\|_{\dot H^{\frac{3}{2}}} \Big(\|\big((w + e^{t\Delta} v_0)\cdot\nabla( w + e^{t\Delta} v_0 )\|_{\dot H^{-\frac{1}{2}}}
  + \|\theta\|_{\dot H^{-\frac{1}{2}}} \Big) \\
  & \leq \|w\|_{\dot H^{\frac{3}{2}}} \Big( \|w\|_{\dot H^1}^2 + \|e^{t\Delta} v_0\|_{\dot H^1}^2 + \|\theta\|_{L^{\frac{3}{2}}} \Big),
\end{align*}
which, together with the use of interpolation and Cauchy-Schwartz inequality, allows us  to find
\begin{eqnarray*}
  \frac{\dd}{\dd t} \|w\|_{\dot H^{\frac{1}{2}}}^2 + \|w\|_{\dot H^{\frac{3}{2}}}^2
  &\leq& C_1 \|w\|_{\dot H^{\frac{1}{2}}}^2 \|w\|_{\dot H^{\frac{3}{2}}}^2 + C_1 \|e^{t\Delta}v_0\|_{\dot H^1}^4
  + C_2 \|\theta_0\|_{L^{\frac{3}{2}}}^2 ,
\end{eqnarray*}
where $C_1,C_2>0$.
Therefore, as long as
\begin{equation}\label{eq:v0-the0-cd1}
  C_1 \int_0^T\|e^{t\Delta} v_0\|_{\dot H^1}^4 \dd t + C_2 \|\theta_0\|_{L^{\frac{3}{2}}}^2 T \leq \frac{1}{4C_1},
\end{equation}
we may use the continuity method to get that
\begin{eqnarray*}
  \|w\|_{L^\infty_T(\dot H^{\frac{1}{2}})}^2 + \|w\|_{L^2_T(\dot H^{\frac{3}{2}})}^2 \leq \frac{1}{2C_1}.
\end{eqnarray*}
Then, the classical estimate for the heat operator $e^{t\Delta} v_0$ in \eqref{eq:e-tDel-es} allows us to write that
\begin{equation}\label{eq:H1/2-T-1}
  \|v\|_{L^\infty_T(\dot H^{\frac{1}{2}})}^2 + \|v\|_{L^2_T(\dot H^{\frac{3}{2}})}^2 \leq \frac{1}{C_1} + 2\|v_0\|_{\dot H^{\frac{1}{2}}}^2.
\end{equation}
By noticing that $\|e^{t\Delta} v_0\|_{L^4_T(\dot H^1)} \leq \|v_0\|_{\dot H^{\frac{1}{2}}}$ (which is a consequence of \eqref{eq:e-tDel-es}) we see that there exists a positive real number $M$, which depends on $v_0$, such that
\begin{equation}\label{eq:v0h-es1}
  \| e^{t\Delta}v_0^\textrm{h} \|_{L^4_T(\dot H^1)}^4 \leq \|v_0^\textrm{h}\|_{\dot H^{\frac{1}{2}}}^4 \leq \frac{1}{64 C_1^2},\quad \textrm{where}\;\; v_0^\textrm{h}= \mathcal{F}^{-1}\big(\mathds{1}_{\{|\xi|>M\}} \widehat{v_0}(\xi)\big).
\end{equation}
Therefore, one finds
\begin{align*}
  C_1 \|e^{t\Delta} v_0\|_{L^4_T(\dot H^1)}^4 +  C_2 \|\theta_0\|_{L^{\frac{3}{2}}}^2 T &\leq 8 C_1 \|e^{t\Delta} v_0^\textrm{h}\|_{L^4_T(\dot H^1)}^4 + 8 C_1 \|e^{t\Delta} (v_0-v_0^\textrm{h})\|_{L^4_T(\dot H^1)}^4 +  C_2 \|\theta_0\|_{L^{\frac{3}{2}}}^2 T  \\
  &\leq \frac{1}{8C_1} + 8 C_1 M^2 \|e^{t\Delta} (v_0 - v_0^\textrm{h})\|_{L^4_T(\dot H^{\frac{1}{2}})}^4 +  C_2 \|\theta_0\|_{L^{\frac{3}{2}}}^2 T \\
  &\leq \frac{1}{8C_1} + \big(8 C_1 M^2\|v_0 \|_{\dot H^{\frac{1}{2}}}^4 +  C_2 \|\theta_0\|_{L^{\frac{3}{2}}}^2\big) T .
\end{align*}
Hence, as longs as
\begin{equation}\label{eq:T-cond}
  T\leq \frac{1}{8C_1 \big(8 C_1 M^2\|v_0 \|_{\dot H^{\frac{1}{2}}}^4 +  C_2 \|\theta_0\|_{L^{\frac{3}{2}}}^2\big)},
\end{equation}
we have local existence of a solution $(v,\theta)$ which satisfies the estimate \eqref{eq:H1/2-T-1}. \\

We now state the following result on the regularity estimates of $v$ whose proof is in the appendix.
\begin{lemma}\label{lem:L2Linf}
   Assume that $(v,\theta)$ is a smooth solution to the 3D Boussinesq system \eqref{eq:BousEq} on $[0,T]$ satisfying \eqref{es0.thet} with $s >3$. Assume that, 
   one has
\begin{align}\label{eq:v-H1/2es}
  \|v\|_{L^\infty_T(\dot H^{\frac{1}{2}})}^2 + \|v\|_{L^2_T(\dot H^{\frac{3}{2}})}^2 \leq C_0 E(T),
  \quad \textrm{where}\quad E(T):=  \|v_0\|_{\dot H^{\frac{1}{2}}}^2 +  \|\theta_0\|_{L^1\cap L^s}^2 T + 1 .
\end{align}
Then, there exists a positive constant $C$ such that 
\begin{align}\label{es.L2Linf}
  \|v\|_{L^2_T(L^\infty)} \leq C E(T)^{\frac{3}{2}} ,
\end{align}
and for all $q >3$,
\begin{align}\label{es.uni0}
  \|v\|_{L^\infty_T(\dot B^{-1+\frac3q}_{q,\infty})}+\|v\|_{\widetilde{L}^1_T(\dot B^{1+\frac3q}_{q,\infty})}\leq C E(T)^2.
\end{align}
\end{lemma}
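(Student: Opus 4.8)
The plan is to prove the two bounds in the order \eqref{es.uni0} then \eqref{es.L2Linf}, reading the $L^2_T(L^\infty)$ control off the endpoint Besov estimates of \eqref{es.uni0}. The starting point is Duhamel's formula for the momentum equation,
\[
v=e^{t\Delta}v_0+\int_0^t e^{(t-\tau)\Delta}\,\mathbb{P}\big(-v\cdot\nabla v+\theta e_3\big)\,\dd\tau ,
\]
together with the heat smoothing estimate \eqref{eq:heat-Besov} of Lemma \ref{lem:heat-Besov} run in the Chemin--Lerner scale $\widetilde L^\infty_T(\dot B^{-1+\frac3q}_{q,\infty})\cap\widetilde L^1_T(\dot B^{1+\frac3q}_{q,\infty})$. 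This is the natural two-endpoint space for the heat flow with base regularity $s=-1+\frac3q$ and parabolic gain $s+2=1+\frac3q$, and it is exactly the framework in which Danchin--Pa\"icu close the critical theory.

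For \eqref{es.uni0} I would treat the three forcing contributions separately. The initial datum is controlled by the Besov embedding $\dot H^{\frac12}=\dot B^{\frac12}_{2,2}\hookrightarrow\dot B^{-1+\frac3q}_{q,\infty}$ (same scaling dimension, $q\ge2$), giving $\|v_0\|_{\dot B^{-1+\frac3q}_{q,\infty}}\lesssim\|v_0\|_{\dot H^{\frac12}}\lesssim E(T)^{\frac12}$. The buoyancy term uses $L^1\cap L^s\hookrightarrow\dot B^{-1+\frac3q}_{q,\infty}$: the hypothesis $s>3$ is precisely what yields $-\tfrac3s>-1$, so $L^s\hookrightarrow\dot B^{-3(\frac1s-\frac1q)}_{q,\infty}\hookrightarrow\dot B^{-1+\frac3q}_{q,\infty}$ for high frequencies while $L^1$ handles the low ones; integrating in time and invoking \eqref{es0.thet} bounds it by $E(T)$ up to harmless powers of $T$ (absorbed since $E(T)\ge1$). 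The genuinely delicate piece is the bilinear term. Writing $v\cdot\nabla v=\divg(v\otimes v)$ and using $\|\divg(v\otimes v)\|_{\dot B^{-1+\frac3q}_{q,\infty}}\lesssim\|v\otimes v\|_{\dot B^{\frac3q}_{q,\infty}}$, I would establish the critical product estimate $\|v\cdot\nabla v\|_{\widetilde L^1_T(\dot B^{-1+\frac3q}_{q,\infty})}\lesssim\|v\|_{\widetilde L^2_T(\dot B^{\frac3q}_{q,\infty})}^2$ via Bony's decomposition, exploiting $0<\tfrac3q<1$ (i.e.\ $q\in(3,6)$) and the divergence-free structure. Interpolating the two endpoints, $\|v\|_{\widetilde L^2_T(\dot B^{\frac3q}_{q,\infty})}^2\lesssim\|v\|_{L^\infty_T(\dot B^{-1+\frac3q}_{q,\infty})}\,\|v\|_{\widetilde L^1_T(\dot B^{1+\frac3q}_{q,\infty})}$, where the first factor is already $\lesssim E(T)^{\frac12}$ by the hypothesis \eqref{eq:v-H1/2es}. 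This renders the nonlinear contribution linear in the unknown high-norm, so a short-time absorption---subdividing $[0,T]$ into intervals on which $\|v\|_{\widetilde L^2(\dot B^{3/q}_{q,\infty})}$ is small and summing---closes the a priori estimate and produces the power $E(T)^2$.

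The bound \eqref{es.L2Linf} is then deduced by a dyadic split. Bernstein gives $\|v\|_{L^\infty}\lesssim\sum_j 2^{\frac{3j}{q}}\|\Delta_j v\|_{L^q}$, and cutting the sum at a level $N$ one controls the low frequencies by $2^{N}\|v\|_{\dot B^{-1+\frac3q}_{q,\infty}}$ and the high ones by $2^{-N}\|v\|_{\dot B^{1+\frac3q}_{q,\infty}}$; both geometric series converge because the two regularities lie one full derivative below and above the critical index $\tfrac3q$, so only the $\dot B_{q,\infty}$ (i.e.\ $\ell^\infty$) information of \eqref{es.uni0} is needed, with no endpoint summability lost. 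Keeping the cut-off inside the time integral, the controlled high-frequency factor appears exactly in its $\widetilde L^1_T$ norm (here one uses the coincidence $\widetilde L^1_T(\dot B^{\frac3q}_{q,1})=L^1_T(\dot B^{\frac3q}_{q,1})\hookrightarrow L^1_T(L^\infty)$), which together with the uniform low-frequency control $\|v\|_{L^\infty_T(\dot B^{-1+3/q}_{q,\infty})}\lesssim E(T)^{\frac12}$ first yields an $L^1_T(L^\infty)$ bound; the $L^2_T$ improvement is obtained by a further interpolation in the time variable against this uniform $L^\infty_T(\dot H^{\frac12})$ information, possibly sharpened through the smoothing estimate \eqref{TD-sm-es} of Lemma \ref{lem:TD-sm2}. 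Tracking the powers of $E(T)$ produced by the two factors (and $E(T)\ge1$) gives the claimed $E(T)^{\frac32}$.

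I expect two points to carry the real difficulty. The principal one is the critical bilinear estimate for \eqref{es.uni0}: at summability index $\infty$ the space $\dot B^{\frac3q}_{q,\infty}$ is not an algebra, so the product bound must be squeezed out of the paraproduct/remainder decomposition using the cancellation in $\divg(v\otimes v)$ and the restriction $\tfrac3q\in(0,1)$, and the resulting quadratic term has to be linearized against the $\dot H^{1/2}$-controlled factor and absorbed by time-subdivision. The secondary but genuine technical point is the time-integrability bookkeeping in \eqref{es.L2Linf}: the $L^\infty$ estimate sits exactly at the critical summability threshold, so one must keep the dyadic cut-off inside the $L^2_T$ integral---rather than interpolating the two Chemin--Lerner norms directly, which would only land in $\widetilde L^2_T(\dot B^{3/q}_{q,\infty})$ and just miss $L^\infty$---and it is precisely here that the strict inequality $q>3$, the range $q\in(3,6)$, and the admissible powers of $T$ in $E(T)$ are used.
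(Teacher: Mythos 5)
Your plan inverts the logical order of the paper's proof, and the inversion is not innocuous: the step ``deduce \eqref{es.L2Linf} from \eqref{es.uni0}'' runs into the endpoint summability obstruction that you yourself flag but do not actually resolve. The bound \eqref{es.uni0} carries only $\ell^\infty$ summability over dyadic blocks (third Besov index $\infty$ in both norms), whereas any route to $L^\infty_x$ — whether via $\|v\|_{L^\infty}\lesssim\|v\|_{\dot B^{3/q}_{q,1}}$ and the real interpolation $\|v\|_{\dot B^{3/q}_{q,1}}\lesssim\|v\|^{1/2}_{\dot B^{-1+3/q}_{q,\infty}}\|v\|^{1/2}_{\dot B^{1+3/q}_{q,\infty}}$, or frequency by frequency via $\|\Delta_j v\|_{L^2_TL^\infty}\lesssim\big(2^{j(-1+3/q)}\|\Delta_j v\|_{L^\infty_TL^q}\big)^{1/2}\big(2^{j(1+3/q)}\|\Delta_j v\|_{L^1_TL^q}\big)^{1/2}$ — ends up needing either $\|v\|_{L^1_T(\dot B^{1+3/q}_{q,\infty})}$ (which is \emph{not} controlled by $\|v\|_{\widetilde L^1_T(\dot B^{1+3/q}_{q,\infty})}$; the Chemin--Lerner norm is the weaker one when $\rho=1<r=\infty$) or an $\ell^1$ (at worst $\ell^2$) bound on the sequence $2^{j(1+3/q)}\|\Delta_j v\|_{L^1_TL^q}$, which $\ell^\infty$ information cannot supply: each block is bounded uniformly in $j$ and the sum diverges. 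The ``coincidence $\widetilde L^1_T(\dot B^{3/q}_{q,1})=L^1_T(\dot B^{3/q}_{q,1})$'' is true but unusable here because you never produce a third-index-$1$ bound; interpolating against $L^\infty_T(\dot H^{1/2})$ does not repair this since the high-regularity factor retains its $\ell^\infty$ index. This is exactly why the paper proves \eqref{es.L2Linf} \emph{first}, by the Chemin--Gallagher splitting $v=h+w$: the linear part $h$ is handled by the sharp characterization $\|e^{t\Delta}f\|_{L^2_tL^\infty_x}\approx\|f\|_{\dot B^{-1}_{\infty,2}}$ (third index $2$) together with $\dot H^{1/2}\hookrightarrow\dot B^{-1}_{\infty,2}$, and the Duhamel part $w$ gains $\ell^1$ summability in $j$ from a discrete convolution of the $\ell^2$-normalized sequence $c_{j'}$ (coming from the $\ell^2$-based bound $\|v\|_{\widetilde L^\rho_T(\dot H^{1/2+2/\rho})}\lesssim E(T)$) against a convergent geometric tail. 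Once \eqref{es.L2Linf} is in hand, \eqref{es.uni0} is immediate: $\|v\cdot\nabla v\|_{L^1_T(\dot B^{-1+3/q}_{q,\infty})}\lesssim\|v\|_{L^2_T(L^\infty)}\|v\|_{L^2_T(\dot H^{3/2})}$ by \eqref{eq:prod-es}, with no bilinear estimate in a non-algebra and no absorption.

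A secondary weakness of your first step: the proposed closure of \eqref{es.uni0} by subdividing $[0,T]$ into intervals where $\|v\|_{\widetilde L^2(\dot B^{3/q}_{q,\infty})}$ is small is delicate precisely because the third index is $\infty$ — for $\ell^\infty$-based Chemin--Lerner norms one cannot in general make the norm small on each piece of a finite subdivision (different frequencies may concentrate on different subintervals), which is the standard obstruction to continuity-in-time arguments in these spaces. Since the hypothesis \eqref{eq:v-H1/2es} already hands you the full nonlinear energy control, none of this machinery is needed: the paper's argument is a linear estimate on top of \eqref{eq:v-H1/2es}, and that is the structure you should reproduce.
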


The {\it{a priori}} estimates \eqref{es0.thet}, \eqref{eq:H1/2-T-1} and Lemma \ref{lem:L2Linf},
are enough  to show the existence ({\it{e.g.}} by using a standard approximation process).
first solve the Cauchy problem \eqref{eq:BousEq} with frequency localized initial data
$(\theta_{0,n},v_{0,n}) := (S_n \theta_0, S_n v_0)$, $n\in \N$,
where $S_n$ is defined by \eqref{eq:Del-Sj}. It is clear that $(\theta_{0,n},v_{0,n})$ belongs to $H^s(\R^3)$ for all $s$. Then, it follows from  \cite{DanPa08B}
that we obtain a local (unique) smooth solution $(\theta_n, v_n, \nabla p_n)$ to the system \eqref{eq:BousEq} associated with
$(\theta_{0,n}, v_{0,n})$. Moreover, the \textit{a priori} estimates below ensure that $(\theta_n,v_n)$ satisfies
\eqref{eq:v-the-es} on $[0,T]$ uniformly in $n$. Hence by using Rellich compactness theorem for instance (see \cite{PGLR1}), one can pass to the limit $n\rightarrow \infty$ (up to a subsequence) to show that there exist functions $(\theta,v,\nabla p)$ satisfying \eqref{eq:v-the-es}
which are solutions to the 3D Boussinesq system \eqref{eq:BousEq} in the sense of distribution. \\

$\bullet$ Step 2: local uniqueness. \\

Let $(\theta_1, v_1, \nabla p_1)$ and $(\theta_2, v_2, \nabla p_2)$ be two solutions of the 3D Boussinesq system \eqref{eq:BousEq} with the same initial data and satisfying \eqref{eq:v-the-es}.
We shall use the following uniqueness result.
\begin{lemma}\label{lem:uniq}
 Assume that for some $p\in [1,6)$ and $i=1,2$, we have
\begin{align}\label{eq:uniq-assum}
  \theta_i \in L^\infty_T (\dot B^{-1+\frac{3}{p}}_{p,\infty}),\quad v_i \in L^\infty_T(\dot B^{-1+\frac{3}{p}}_{p,\infty}) \cap
  \widetilde{L}^1_T(\dot B^{1+\frac{3}{p}}_{p,\infty}).
\end{align}
There exists a constant $c>0$ depending on $p$ such that if there is an $r\in (1,\infty]$ such that
\begin{align}\label{eq:uniq-cd}
  \|v_1\|_{\widetilde{L}^1_T(\dot B^{1+\frac{3}{p}}_{p,\infty})}
  + \|v_2\|_{\widetilde{L}^r_T(\dot B^{-1 + \frac{2}{r}+ \frac{3}{p}}_{p,\infty})} \leq c,
\end{align}
then $(\theta_1,v_1,\nabla p_1) \equiv (\theta_2, v_2,\nabla p_2)$ on $\R^3\times [0,T]$.
\end{lemma}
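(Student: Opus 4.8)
The plan is to run an energy/paraproduct argument on the differences, in the spirit of Danchin and Pa\"icu \cite{DanPa08}. Set $\delta v:=v_1-v_2$, $\delta\theta:=\theta_1-\theta_2$ and $\delta p:=p_1-p_2$. Subtracting the two copies of \eqref{eq:BousEq} and using that each $v_i$ is divergence-free to recast the quadratic terms in divergence form, the pair $(\delta v,\delta\theta)$ solves
\begin{equation*}
\begin{cases}
\partial_t \delta v - \Delta \delta v + \nabla \delta p = -\divg(\delta v\otimes v_1) - \divg(v_2\otimes\delta v) + \delta\theta\, e_3, & \divg \delta v = 0,\\
\partial_t \delta\theta + v_1\cdot\nabla\delta\theta = -\divg(\theta_2\, \delta v),
\end{cases}
\end{equation*}
with zero initial data. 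The crux is that, under \eqref{eq:uniq-assum}, the velocities need not lie in $L^1_T(\mathrm{Lip})$, so the classical transport and product estimates — which cost one logarithm at the borderline summability index $r=\infty$ — are unavailable. I would therefore measure the differences one derivative below the solutions, namely $\delta v\in \widetilde{L}^\infty_T(\dot B^{-1+\frac3p}_{p,\infty})\cap \widetilde{L}^1_T(\dot B^{1+\frac3p}_{p,\infty})$ and $\delta\theta\in \widetilde{L}^\infty_T(\dot B^{-1+\frac3p}_{p,\infty})$, weighted by a frequency-dependent time factor that absorbs the logarithmic loss.

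Concretely, I would localize in frequency by $\dot\Delta_q$, apply Lemma \ref{lem:heat-Besov} to the heat part of the $\delta v$-equation and an $L^p$-transport estimate to the $\delta\theta$-equation, and introduce a weight $\alpha_q(t)=C\int_0^t(\text{high-frequency norm of }v_1,v_2\text{ at scales}\gtrsim q)\,\dd\tau$, chosen to meet the monotonicity/Lipschitz-in-$q$ hypothesis of Lemma \ref{lem:ProdEs-uni}, so that after multiplication by $e^{-\alpha_q}$ the commutator $[\dot\Delta_q,v_1\cdot\nabla]\delta\theta$ and the bilinear terms are tamed. The resulting trilinear quantities are then bounded by the tailored product estimate of Lemma \ref{lem:ProdEs-uni}: the terms $\divg(\delta v\otimes v_1)$ and $\divg(v_2\otimes\delta v)$ are controlled using $v_1\in\widetilde{L}^1_T(\dot B^{1+\frac3p}_{p,\infty})$ (with $r=1$) and $v_2\in\widetilde{L}^r_T(\dot B^{-1+\frac2r+\frac3p}_{p,\infty})$ (with $r\in(1,\infty]$), which is exactly where the two pieces of the smallness hypothesis \eqref{eq:uniq-cd} enter; the temperature source $\divg(\theta_2\,\delta v)$ is estimated with $\theta_2\in L^\infty_T(\dot B^{-1+\frac3p}_{p,\infty})$ playing the role of the (non-small) coefficient, its contribution being absorbed through the $\widetilde{L}^1_T(\dot B^{1+\frac3p}_{p,\infty})$-gain of $\delta v$ and the weight.

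Collecting these bounds yields a closed system for the weighted quantity $X(t):=\sup_q 2^{q(-1+3/p)}e^{-\alpha_q(t)}\|\dot\Delta_q(\delta v,\delta\theta)(t)\|_{L^p}$ together with the companion weighted $\widetilde{L}^1_T(\dot B^{1+\frac3p}_{p,\infty})$ norm of $\delta v$, of the schematic form $X(t)\le c\,X(t)+(\text{Gr\"onwall-type remainder})$. Choosing $c$ as small as in \eqref{eq:uniq-cd} lets me absorb the leading term into the left-hand side, and a Gr\"onwall argument — the coupling $\delta\theta\,e_3$ contributing only a harmless factor of $T$ after $\widetilde{L}^\infty_T\hookrightarrow\widetilde{L}^1_T$ — forces $X\equiv 0$ on $[0,T]$. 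Hence $\delta v\equiv 0$ and $\delta\theta\equiv 0$, and reading $\nabla\delta p$ off the equation gives $\nabla\delta p\equiv 0$, which is the claim.

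The step I expect to be the main obstacle is precisely this borderline product/transport analysis: with summability index $\infty$ the naive estimates diverge logarithmically and the advecting field lies below the Lipschitz threshold, so everything hinges on the correct choice of the weight $\alpha_q(t)$ and on checking that the indices in Lemma \ref{lem:ProdEs-uni} are satisfied — in particular that the worst contribution, carried by $v_1$ at the exact scaling threshold $\dot B^{1+\frac3p}_{p,\infty}$, remains absorbable. Keeping the regularity levels of $\delta v$ and $\delta\theta$ mutually compatible, so that $\delta\theta$ can legitimately feed back into the momentum forcing, is the other delicate piece of bookkeeping.
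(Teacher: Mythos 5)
Your plan is essentially the paper's own route: for $r=\infty$ the paper simply invokes Theorem 3 of Danchin--Pa\"icu \cite{DanPa08}, and for $r\in(1,\infty)$ it reruns that weighted frequency-by-frequency scheme, the only new ingredient being the product estimate \eqref{eq:uniq-modi} for $\divg(\delta v\otimes v_2)$ obtained from Lemma \ref{lem:ProdEs-uni} with $s_1=-1+\frac2r+\frac3p$, $s_2=-\frac2r+\frac3p-\eta$, followed by an interpolation of the dual $L^{r'}_t$ norm between the weighted $L^\infty_t$ and $L^1_t$ quantities --- exactly the weight-and-absorb argument you outline, with the weight built (as in the paper's $\varepsilon_q$) from the low-frequency part of $v_1$ so as to absorb the scaling-critical contribution of $v_1\in\widetilde L^1_T(\dot B^{1+3/p}_{p,\infty})$. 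One concrete correction: although you say you measure the differences ``one derivative below'' the solutions, the indices you actually write ($\delta v\in\widetilde L^\infty_T(\dot B^{-1+3/p}_{p,\infty})\cap\widetilde L^1_T(\dot B^{1+3/p}_{p,\infty})$, and $X(t)$ at regularity $-1+\tfrac3p$) sit at the solutions' own level, where condition \eqref{eq:s1s2-cd} of Lemma \ref{lem:ProdEs-uni} fails at the endpoint $s_1=1+\tfrac3p$; you must genuinely drop to $-2+\tfrac3p-\eta$ for a small $\eta>0$ (whence also the restriction $p<6$), as in \cite{DanPa08} and in the paper's \eqref{eq:uniq-modi}.
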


\begin{proof}[Proof of Lemma \ref{lem:uniq}]

When $r=\infty$, Lemma \ref{lem:uniq} is nothing but  Theorem 3 in \cite{DanPa08}.
It suffices to prove the lemma for the remaining values of $r$, namely $r\in (1,\infty)$. In this case, we may still extend the uniqueness result of Danchin and Pa\"icu \cite{DanPa08}.
Indeed, we may use the following useful estimate
\begin{align}\label{eq:uniq-modi}
  \sup_{q\in\Z} \int_0^t 2^{q(-2+ \frac{3}{p}-\eta)-\varepsilon_q(\tau)} \|\dot\Delta_q\divg (\delta v\otimes v_2)\|_{L^p} \dd \tau
  \leq C \|v_2\|_{\widetilde{L}^r_t(\dot B^{-1+\frac{2}{r} +\frac{3}{p}}_{p,\infty})}
  \sup_{q\in\Z} \Big\|2^{q(-\frac{2}{r}+\frac{3}{p} -\eta)-\varepsilon_q(\tau)} \|\dot\Delta_q \delta v\|_{L^p} \Big\|_{L^{r'}_t},
\end{align}
and the interpolation inequality
\begin{align}
  & \quad \sup_{q\in\Z} \Big\|2^{q(-\frac{2}{r}+\frac{3}{p} -\eta)-\varepsilon_q(\tau)} \|\dot\Delta_q \delta v\|_{L^p} \Big\|_{L^{r'}_t} \nonumber \\
  & \leq C \Big( \sup_{\tau\in [0,t],q\in\Z} 2^{-q(-2 + \frac{3}{p}-\eta)-\varepsilon_q(\tau)}
  \|\dot\Delta_q \delta v\|_{L^p} + \sup_{q\in\Z}\int_0^t 2^{q(\frac{3}{p}-\eta)-\varepsilon_q(\tau)}
  \|\dot\Delta_q\delta v\|_{L^p} \dd \tau\Big), 
\end{align}
where $\delta v =v_1 -v_2$, $0<\eta< -1 + 3\min\{\frac{2}{p},1\}$, $r'= \frac{r}{r-1}$ is the dual index of $r$ and
$$\varepsilon_q(t) := C \sum_{q_1\leq q} 2^{q_1(1+\frac{3}{p})} \|\big(\sum_{|k|\leq N_0} \dot\Delta_{q_1+k} v_1\big)\|_{L^1_t(L^p)}.$$
 To get the inequality \eqref{eq:uniq-modi}  it suffices to apply
Lemma \ref{lem:ProdEs-uni} with $s_1= -1 + \frac{2}{r} + \frac{3}{p}$, $s_2 =-\frac{2}{r}+ \frac{3}{p} -\eta$,
$s_1 + s_2 = -1 + \frac{6}{p} -\eta $ and such that they satisfy \eqref{eq:s1s2-cd}.

Using \eqref{eq:uniq-modi} and following the same strategy as the proof of Theorem 3 in \cite{DanPa08}, we can show the uniqueness result under the smallness condition \eqref{eq:uniq-cd}.

\end{proof}

Now, with Lemma \ref{lem:uniq} proved,
we see that \eqref{eq:v-the-es} implies the condition \eqref{eq:uniq-assum}.
Then, by letting $r=2$ and using the absolute continuity of the Lebesgue integral, we see that
there exists a small time $T_1>0$ depending only on $\|v_i\|_{\widetilde{L}^1_T(\dot B^{1+ 3/p}_{p,\infty})}$
and $\|v_i\|_{\widetilde{L}^2_T(\dot B^{3/p}_{p,\infty})}$ such that \eqref{eq:uniq-cd}
is satisfied for $r=2$ and $p\in (3,6)$. Therefore, Lemma \ref{lem:uniq} gives the uniqueness on $\R^3\times [0,T_1]$.
Repeating this process on $[T_1,2T_1]$, $[2T_1,3T_1]$...etc, and after a finite time we can conclude the uniqueness on the whole set $\R^3\times [0,T]$. \\

$\bullet$ Step 3: global existence of solution under the smallness condition \eqref{eq:data-cond}. \\

Using a result of Danchin and Pa\"icu \cite{DanPa08B} (see Theorem 1.4), we know that under the smallness condition \eqref{eq:data-cond}, there exists an absolute constant $C_0>0$ such that for any $T>0$,
\begin{equation}\label{eq:vL3inf-es}
  \|v\|_{L^\infty_T(L^{3,\infty})} \leq C \big( \|v_0\|_{L^{3,\infty}} + \|\theta_0\|_{L^1} \big) \leq C_0 c_* .
\end{equation}

Taking the inner product of both sides of the first equation in $\eqref{eq:BousEq}$ with $\Lambda v$, we get
\begin{align*}
  \frac{1}{2}\frac{\dd}{\dd t} \|v\|_{\dot H^{\frac12}}^2 + \| v\|_{\dot H^{\frac32}}^2
  & \leq \| v\|_{\dot H^{\frac32}} \|v\cdot\nabla v\|_{\dot H^{-\frac12}} +
  \| v\|_{\dot H^{\frac32}}\|\theta\|_{\dot H^{-\frac12}}.
\end{align*}
Then,  using classical paradifferential calculus together with the fact that  $L^{3,\infty}(\R^3)\hookrightarrow \dot B^{-1+ 3/r}_{r,\infty}(\R^3)$ for $r>3$, we have that
\begin{align*}
  \|v\cdot\nabla v\|_{\dot H^{-\frac{1}{2}}} \leq C \|T_v v\|_{\dot H^{\frac{1}{2}}} + C \|R(v, v)\|_{\dot H^{\frac{1}{2}}}
  \leq C  \| v\|_{\dot B^{-1}_{\infty,\infty}} \| v\|_{\dot H^{\frac{3}{2}}} \leq C \|v\|_{L^{3,\infty}} \|v\|_{\dot H^{\frac{3}{2}}},
\end{align*}
Therefore, by using \eqref{eq:vL3inf-es} one gets
\begin{align}\label{es.v.Hfr12}
  \frac{1}{2}\frac{\dd}{\dd t} \|v\|_{\dot H^{\frac12}}^2 + \| v\|_{\dot H^{\frac32}}^2
  &\leq C\| v\|_{\dot H^{\frac32}}^2 \| v\|_{L^{3,\infty}} + \| v\|_{\dot H^{\frac32}}\|\theta_0\|_{L^{\frac 32}}\nonumber\\
  &\leq \| v\|_{\dot H^{\frac32}}^2 \Big( 2CC_0 c_* + \frac{1}{4} \Big) + C \|\theta_0\|^2_{L^{\frac{3}{2}}}.
\end{align}
By choosing $c_*>0$ in \eqref{eq:data-cond} so that $2C C_0 c_* \leq \frac{1}{4},$

we obtain that 
\begin{align}\label{es.v.Hfr12b}
  \|v\|_{L^\infty_T(\dot H^{\frac12})}^2 + \| v\|_{L^2_T(\dot H^{\frac32})}^2 \leq \| v_0\|^2_{\dot H^{\frac12}} + C T\|\theta_0\|^2_{L^{\frac{3}{2}}}.
\end{align}

By using the blow-up criterion from Step 1, we easily conclude the global existence result.

\qed

\section{Propagation of the  $C^{1,\gamma},\, W^{2,\infty},\, C^{2,\gamma}$ regularity of the temperature fronts}\label{sec:C1gam}

The goal of this section is to prove the persistence of the regularity of the temperature front for the 3D Boussinesq equation \eqref{eq:BousEq}. We shall respectively show that the $C^{1,\gamma}$, $W^{2,\infty}$ and $C^{2,\gamma}$ of the temperature front is preserved along the evolution.
Before going any further, let us introduce a new quantity $\Gamma$ (in the spirit of the so-called  Alinhac's good unknown).

Let $\Omega=(\Omega^1,\Omega^2,\Omega^3) = \nabla\wedge v$ be the vorticity of the fluid, where the notation $\wedge $ stands for the wedge operation, that is,
\begin{equation*}
  \Omega=\nabla\wedge v=(\partial_2v^3-\partial_3v^2, \partial_3v^1-\partial_1v^3, \partial_1v^2-\partial_2v^1)^t.
\end{equation*}
Applying the operator $\nabla\wedge $ to the equation $\eqref{eq:BousEq}$ gives the vorticity equation:
\begin{equation}\label{eq.Omeg}
  \partial_t \Omega + v\cdot \nabla \Omega - \Delta \Omega =\Omega\cdot \nabla v+ (\partial_2 \theta, -\partial_1\theta, 0)^t.
\end{equation}
Note that
\begin{equation*}
  \partial_t \Omega + v\cdot \nabla \Omega - \Delta \big(\Omega - \Lambda^{-2} (\partial_2 \theta, -\partial_1\theta,0)^t \big) = \Omega\cdot\nabla v,
\end{equation*}
where $\Lambda = (-\Delta)^{1/2}$.

Let us set $\mathcal{R}_{-1,j}= \partial_j \Lambda^{-2} $, $j=1,2$,
and
\begin{align}\label{eq:op-R-1}
  \mathcal{R}_{-1} = (\mathcal{R}_{-1,2}, - \mathcal{R}_{-1,1},0)^t,
\end{align}
then $ \mathcal{R}_{-1} \theta = \Lambda^{-2} \nabla \wedge (\theta e_3) = \Lambda^{-2} (\partial_2 \theta, -\partial_1\theta,0)^t$ and the vector-valued quantity $\mathcal{R}_{-1}\theta$ satisfies
\begin{align*}
  \partial_t \mathcal{R}_{-1}\theta + v\cdot \nabla \mathcal{R}_{-1}\theta = - [\mathcal{R}_{-1},v\cdot\nabla]\theta,
\end{align*}
with
\begin{align}\label{def:R-1comm}
  [\mathcal{R}_{-1}, v\cdot\nabla]\theta : = \big([\mathcal{R}_{-1,2},v\cdot\nabla]\theta,  -[\mathcal{R}_{-1,1},v\cdot\nabla]\theta,0\big)^t .
\end{align}
By introducting a new unknown $\Gamma = (\Gamma^1,\Gamma^2,\Gamma^3)$ defined as
\begin{equation}\label{Gamma}
  \Gamma:= \Omega - \mathcal{R}_{-1}\theta, \end{equation}
we see  that $\Gamma$ verifies  
\begin{equation}\label{eq.Gamm}
  \partial_t \Gamma + v\cdot\nabla \Gamma -\Delta \Gamma =\Omega\cdot \nabla v + [\mathcal{R}_{-1},v\cdot\nabla]\theta. \\
\end{equation}

\subsection{Propagation of the $C^{1,\gamma}$ regularity of the temperature fronts} 
We start by proving the regularity of the velocity  $v$. More precisely, we are going to prove that
if $\theta_0\in L^1\cap L^\infty(\R^3)$ and $v_0\in H^1\cap W^{1,p}(\R^3)$, $p>2$, then for all $r\geq 1$, one has
\begin{equation}\label{eq:v-gam-es1}
  \|v\|_{L^\infty_T(H^1\cap W^{1,p})} + \|v\|_{\widetilde{L}^1_T (B^{\min\{2,3-\frac{3}{p}\}}_{\infty,\infty})} +  \|\Gamma\|_{\widetilde{L}^r_T(B^{\frac{2}{r}}_{p,\infty})}  \leq C e^{C E(T)^3} ,
\end{equation}
where $E(T)$ has been defined in \eqref{eq:v-H1/2es}.

Assume that the above control holds, then, for the temperature front data $$\theta_0 (x)= \theta^*_1(x) \mathds{1}_{D_0}(x) + \theta_2^*(x) \mathds{1}_{D_0^c}(x) \in L^1\cap L^\infty(\R^3)$$ and $v_0\in H^1\cap W^{1,3}(\R^3)$,
by using \eqref{X-C1gam-es} together with the fact that
\begin{align}\label{es:v-L1Cgam}
  v\in \widetilde{L}^1_T(B^2_{\infty,\infty}) \subset L^1_T(B^{1+\gamma}_{\infty,1}) \subset L^1_T(C^{1,\gamma}), \quad \gamma\in (0,1),
\end{align}
we would get that $\psi_t^{\pm 1}(x)\in L^\infty(0,T; C^{1,\gamma}(\R^3))$,
which clearly implies that
\begin{equation*}
  \partial D_0\in C^{1,\gamma}\quad \Longrightarrow \quad \partial D(t)= \psi_t(\partial D_0)\in L^\infty(0,T;C^{1,\gamma}).
\end{equation*}

Let us now prove \eqref{eq:v-gam-es1}. Assume $v_0\in H^1$, we first prove the control in $H^1$ of the velocity $v$.
To do so, we multiply the vorticity equation \eqref{eq.Omeg} with $\Omega$ then integrate with respect to the space variable, one finds that
\begin{align}\label{es1.v.H1}
  \frac{1}{2}\frac{\dd}{\dd t} \|\Omega\|_{L^2}^2 + \| \nabla \Omega\|_{L^2}^2  &\leq \|\Omega \|_{L^6} \|\nabla v\|_{ L^3}\|\Omega \|_{L^2}+\| \nabla\Omega \|_{L^2}\|\theta\|_{L^2}\nonumber\\
  &\leq C\big(\|\nabla v\|_{\dot H^{\frac12}}^2 \|\Omega\|_{L^2}^2+\|\theta_0\|^2_{L^2}\big)+\frac 12 \|\nabla\Omega\|_{L^2}^2,
\end{align}
which, together with the use of Gr\"onwall's inequality and \eqref{eq:v-H1/2es}, gives
\begin{align}\label{es2.v.H1}
  \|\Omega\|_{L^\infty_T(L^2)}^2 + \| \nabla \Omega\|_{L^2_T(L^2)}^2 \leq C\big(\| \Omega_0\|_{ L^2}^2+\|\theta_0\|^2_{L^2}T\big)e^{C\int_0^T\|  v\|_{\dot H^{3/2}}^2\dd t}  \leq C e^{C E(T)}.
\end{align}
Then, combining this control with \eqref{es.v.L2} and interpolation, we see that for all $\rho \in [2,\infty]$,
\begin{equation}\label{es:v-H1}
  \|v\|_{L^\infty_T (H^1)}^2 + \|\nabla v\|_{L^2_T (H^1)}^2 + \|v\|_{L^\rho_T( H^{1+ \frac{2}{\rho}})}^2 \leq C e^{C E(T)}.
\end{equation}

Then, we deal with the estimates in  $L^1_T (B^\gamma_{\infty,1})$ of $\Omega$ where $\gamma\in (0,\frac{1}{2})$. 
By using the smoothing estimate \eqref{es.sm1} with $u\equiv0$
and the embedding $H^1\hookrightarrow B^{\frac 12+\gamma}_{2,1}\hookrightarrow B^{\gamma-1}_{\infty,1}$
for all $\gamma\in (0,\frac12)$, one obtains from equation \eqref{eq.Omeg} that for all $\gamma\in (0,\frac{1}{2})$,
\begin{align}\label{es0.Lip}
  \|\Omega\|_{L^1_T ( B^{\gamma}_{\infty,1})}&\lesssim(1+T)\Big(  \|v_0\|_{H^1}+\int_0^T\|(v\cdot\nabla\Omega, \Omega\cdot\nabla v,\nabla\theta)\|_{B^{\gamma-2}_{\infty,1}}\dd t\Big)\nonumber\\
  &\leq C (1+T)\Big(  \|v_0\|_{H^1}+\int_0^T\|v\otimes\Omega\|_{H^1} \dd t + \int_0^T \|\nabla\theta\|_{B^{\gamma-2}_{\infty,1}}\dd t\Big).
\end{align}
Then, by using the fact that $\|\nabla\theta\|_{B^{\gamma-2}_{\infty,1}}\lesssim \|\theta\|_{L^\infty} \leq C \|\theta_0\|_{L^\infty}$ and
\begin{align*}
  \|v\otimes\Omega\|_{H^1}&\leq \|v\otimes\Omega\|_{L^2}+\|\nabla(v\otimes\Omega)\|_{L^2}\\
  &\lesssim \|v\|_{L^6}\|\Omega\|_{L^3}+\|\nabla v\|_{L^6}\|\Omega\|_{L^3}+\|v\|_{L^\infty}\|\nabla\Omega\|_{L^2}
  \lesssim \|\nabla v\|^2_{H^1} + \|v\|_{L^2}^2 ,
\end{align*}
we get from  \eqref{es0.Lip} and \eqref{es:v-H1} that, for all $\gamma\in[0,\frac 12)$,
\begin{align}\label{es:Omeg-Lip}
  \|\Omega\|_{L^1_T ( B^{\gamma}_{\infty,1})}\lesssim(1+T)\Big(  \|v_0\|_{H^1} + \|\nabla v\|^2_{L^2_T(H^1)} + \|v\|^2_{L^2_T(L^2)}
  +\|\theta_0\|_{L^\infty}T\Big)\leq C e^{CE(T)}.
\end{align}
Futhermore, we have that for all $\gamma\in [0,\frac{1}{2})$,
\begin{equation}\label{es.v.Lip}
  \|v\|_{L^1_T (B^{1+\gamma}_{\infty,1})} \lesssim \|\Delta_{-1} v\|_{L^1_T (L^\infty)} + \|\Omega\|_{L^1_T (B^\gamma_{\infty,1})}
  \leq C  e^{C E(T)} .
\end{equation}

Now, assume that $v_0\in H^1\cap W^{1,p}$ with $p>2$, we want to control the  ${L^p}$ norm of $\Gamma$.
Multiplying both sides of the equation \eqref{eq.Gamm} by $|\Gamma|^{p-2}\Gamma$
and integrating in the space variable and then doing an integration by parts, we find
\begin{eqnarray*}
   \frac{1}{p}\frac{\dd }{\dd t}\|\Gamma\|_{L^p}^p + (p-1)\int_{\R^3} |\nabla \Gamma|^2 |\Gamma|^{p-2} \dd x &\leq&    (p-1) \Big| \int_{\R^3} (\Omega \otimes v) : (\nabla \Gamma) |\Gamma|^{p-2} \dd x \Big| + \|[\mathcal{R}_{-1},v\cdot\nabla]\theta\|_{L^p} \|\Gamma\|_{L^p}^{p-1} \\
  &\leq& (p-1) \Big(\int_{\R^3} |\nabla \Gamma|^2 |\Gamma|^{p-2} \dd x \Big)^{1/2} \Big(\int_{\R^3} |\Omega|^2 |v|^2 |\Gamma|^{p-2} \dd x \Big)^{1/2} \\  && \ +  \ \|[\RR_{-1}, u\cdot\nabla]\theta\|_{L^p} \|\Gamma \|_{L^p}^{p-1} \\
  &\leq&  (p-1)\|\Gamma \|_{L^p}^{p-2} \|\Omega\|_{L^p}^2\|v \|_{L^\infty}^2  + \|\Gamma \|_{L^p}^{p-1} \|[\RR_{-1}, v\cdot\nabla]\theta \|_{L^p}  \\
  && \ + \ \frac 12(p-1)\int_{\R^3} |\nabla \Gamma|^2 |\Gamma|^{p-2}  \dd x.
\end{eqnarray*}
Then, since  $\Omega=\Gamma+\mathcal{R}_{-1}\theta$ and $\RR_{-1}\theta = (\RR_{-1,2}\theta, -\RR_{-1,1}\theta,0)^t$ we find that
\begin{eqnarray}\label{es.Gam.Lp}
  \frac{1}{2}\frac{\dd }{\dd t}\|\Gamma\|_{L^p}^2 &\leq& (p-1) \|\Omega\|_{L^p}^2 \|v\|_{L^\infty}^2 + \|\Gamma \|_{L^p}\|[\RR_{-1}, v\cdot\nabla]\theta \|_{L^p} \\
  &\leq&  \|\Gamma\|_{L^p}^2 \big(2(p-1)\|v \|_{L^\infty}^2+ 1 \big)  + 2(p-1) \|\RR_{-1}\theta \|_{L^p}^2\|v  \|_{L^\infty}^2 + \|[\RR_{-1}, v\cdot\nabla]\theta \|_{L^p}^2. \nonumber
\end{eqnarray}
We then use the Hardy-Littlewood-Sobolev inequality which allows us to get that, for all $p>2$,
\begin{align}\label{es.Hd}
  \|\RR_{-1}\theta\|_{L^p} \lesssim\|\Lambda^{-1}\theta\|_{L^p}\lesssim \| \theta\|_{L^{\frac {3p}{p+3}}}\lesssim \| \theta_0\|_{L^{\frac{3p}{p+3}}}.
\end{align}
Therefore, using Lemma \ref{lem:comm-es}, we obtain that
\begin{align}\label{es1.R-1}
  \|[\RR_{-1}, v\cdot\nabla]\theta\|_{L^p} &\lesssim \|[\RR_{-1}, v\cdot\nabla]\theta\|_{B^1_{p,\infty}} \nonumber\\
  &\lesssim \|\Omega\|_{L^p}\|\theta\|_{L^\infty}+\|v\|_{L^2}\|\theta\|_{L^2}\nonumber\\
  &\lesssim (\|\Gamma\|_{L^p}+\|\RR_{-1}\theta\|_{L^p}) \|\theta\|_{L^\infty}+\|v\|_{L^2}\|\theta\|_{L^2}\nonumber\\
  &\lesssim (\|\Gamma\|_{L^p}+\| \theta_0\|_{L^{\frac {3p}{p+3}}}) \|\theta_0\|_{L^\infty} + (\|v_0\|_{L^2}+ T \|\theta_0\|_{L^2}) \|\theta_0\|_{L^2} .
\end{align}
Using \eqref{es.Hd} and \eqref{es1.R-1} in  \eqref{es.Gam.Lp}, one finds 
\begin{align*}
  \frac{\dd }{\dd t}\|\Gamma(t)\|_{L^p}^2 \leq  C\big(1+ \|\theta_0\|_{L^\infty}^2 + \|v(t) \|_{L^\infty}^2\big) \big(\|\Gamma(t)\|_{L^p}^2 +  \|\theta_0 \|_{L^{\frac{3p}{p+3}}}^2\big)
  + C (1+T)^2 \|(v_0,\theta_0) \|_{L^2}^2 \|\theta_0\|_{L^2}^2  .
\end{align*}
Hence, Gr\"onwall's inequality and the control of $v$ in $L^2L^{\infty}$ given by \eqref{es.L2Linf}, imply that
\begin{align}\label{es:Gam.Lp}
  \|\Gamma\|_{L^\infty_T(L^p)}\leq & C \big(1 + T^2 + \|v\|^2_{L^2_T(L^\infty)}\big) \exp\Big\{C\|v\|^2_{L^2_T(L^\infty)} + C(1+T)\Big\}
  \leq C e^{CE(T)^3},
\end{align}
where $C>0$ depends on $p$ and on the norms of $(v_0,\theta_0)$. Note that we have  used that
\begin{align*}
  \|\Gamma_0\|_{L^p} \leq \|\Omega_0\|_{L^p} + \|\RR_{-1}\theta_0\|_{L^p} \leq C \|v_0\|_{W^{1,p}} + C\|\theta_0\|_{L^{\frac{3p}{p+3}}}\leq C.
\end{align*}
Moreover, we get the control in $L^p$ of $\Omega$ from \eqref{es:Gam.Lp} and \eqref{es.Hd}, that is, 
\begin{align}\label{es.Ome.Lp}
  \|\Omega\|_{L^\infty_T (L^p)} \leq \|\Gamma\|_{L^\infty_T (L^p)} + \|\RR_{-1}\theta\|_{L^\infty_T (L^p)}  \leq  C e^{CE(T)^3}.
\end{align}
By taking advantage of the high/low frequency decomposition one finds that for all $p>2$,
\begin{align}\label{es.v.W1p}
  \|v\|_{L^\infty_T (W^{1,p})} &\leq  \|\Delta_{-1} v\|_{L^\infty_T (W^{1,p})} + \|(\mathrm{Id}-\Delta_{-1})v\|_{L^\infty_T (W^{1,p})} \nonumber\\
  & \leq C \|v\|_{L^\infty_T (L^2)} + C \|\Omega\|_{L^\infty_T (L^p)} \nonumber \\
  &\leq C  e^{CE(T)^3}.
\end{align}

Recalling the equation verified by $\Gamma$ (see \eqref{eq.Gamm}) and using the smoothing effect given by \eqref{TD-sm-es} and Lemma \ref{lem:comm-es},
one obtains that, for all $p>2$ and $r\geq 1$,
\begin{align}\label{es.Gam.fr}
   \sup_{j\in \N} 2^{\frac{2}{r}j}\|\Delta_j \Gamma\|_{L^r_T (L^p)} & \lesssim \|\Gamma_0\|_{L^p} + \int_0^T \|\nabla v\|_{L^p} \|\Gamma\|_{B^0_{\infty,\infty}} \dd t \ + \  \|[\RR_{-1},v\cdot\nabla]\theta \|_{L^1_T(L^p)} + \|\Omega\cdot\nabla v\|_{L^1_T (L^p)}  \nonumber \\
   &\lesssim  \|\Omega_0\|_{L^p} + \|\RR_{-1}\theta_0\|_{L^p} + \ \|v\|_{L^\infty_T ( W^{1,p}) } \big(\|\Omega\|_{L^1_T (L^\infty )}  + \|\theta\|_{L^1_T(L^1\cap L^\infty)} \big) \nonumber \\
    & \quad + \ T \|\Omega\|_{L^\infty_T(L^p)} \|\theta\|_{L^\infty_T(L^\infty)} + T \|(v,\theta)\|_{L^\infty_T(L^2)}  \nonumber \\
   & \leq C  e^{C E(T)^3},
\end{align}
where in the last inequality we have used \eqref{es:Omeg-Lip}, \eqref{es.v.W1p} and the fact that $\|\RR_{-1}\theta\|_{B^0_{\infty,\infty}}\leq C \|\theta\|_{L^1\cap L^\infty}$.
Then using \eqref{es:Gam.Lp} and \eqref{es.Gam.fr}, we find that for any $p>2$ and $r\geq 1$,
\begin{align}\label{es.Gam-Besov}
  \|\Gamma\|_{\widetilde{L}^r_T(B^{\frac{2}{r}}_{p,\infty})} &\leq C T^{\frac{2}{r}}\|\Delta_{-1}\Gamma\|_{L^\infty_T(L^p)} + \sup_{j\in \N} 2^{\frac{2}rj}\|\Delta_j \Gamma\|_{L^r_T (L^p)} \nonumber \\
  & \leq C  e^{CE(T)^3}.
\end{align}

Finally, by using \eqref{es.Gam-Besov} and the continuous embedding $B^2_{3,\infty}\hookrightarrow B^\gamma_{\infty,1}$ for $\gamma\in (0,1)$,
we may get a more precise estimates than \eqref{es:Omeg-Lip}-\eqref{es.v.Lip} for all $\gamma\in [0,1)$. More precisely, we have
\begin{align}\label{eq:v-BesEs}
  \|v\|_{\widetilde{L}^1_T(B^{\min\{2, 3-2/p\}}_{\infty,\infty})} &\leq \|\Delta_{-1} v\|_{L^1_T(L^\infty)}
  + C \|\Omega\|_{\widetilde{L}^1_T(B^{\min\{1, 2- 3/p\}}_{\infty,\infty})} \nonumber \\
  &\leq  T \|v\|_{L^\infty_T(L^2)} + C \|\Gamma\|_{\widetilde{L}^1_T(B^{2-3/p}_{\infty,\infty})}
  + C \|\RR_{-1}\theta\|_{L^1_T(B^1_{\infty,\infty})} \nonumber \\
  &\leq C (1+T)^2 + C \|\Gamma\|_{\widetilde{L}^1_T (B^2_{p,\infty})} + C \|\theta\|_{L^1_T(L^1\cap L^\infty)} \nonumber \\
  &\leq C  e^{CE(T)^3}.
\end{align}

Hence, using \eqref{es:v-H1}, \eqref{es.v.W1p}, \eqref{es.Gam-Besov} and \eqref{eq:v-BesEs}, we find that \eqref{eq:v-gam-es1} holds, as desired.
\qed

\subsection{Persistence of the $W^{2,\infty}$ regularity of the temperature}\label{sub.W2inf}
By using Lemma \ref{lem:flow}, in order to show the control of the   $ L^\infty_T(W^{2,\infty})$ norm of the temperature front,
it suffices to control the velocity $v$ in the space $L^1(0,T; W^{2,\infty})$.
In view of the Biot-Savart law and the relation $\Omega= \Gamma + \RR_{-1}\theta$ with $\RR_{-1}\theta = \Lambda^{-2} \nabla \wedge (\theta e_3)$, we see that
\begin{align}\label{eq.v.GamThe}
  \nabla v = (-\Delta)^{-1}\nabla \nabla \wedge \Omega =&  \nabla\Lambda^{-2} \nabla\wedge \Gamma + \nabla \Lambda^{-4} \nabla \wedge \nabla \wedge (\theta e_3) \nonumber\\
  =&  \nabla \Lambda^{-2} \nabla\wedge \Gamma + \Lambda^{-4} \nabla^2\partial_3  \theta+ (\Lambda^{-2} \nabla \theta)\otimes e_3 ,
\end{align}
where in the last line we have used the formula $\nabla \wedge \nabla \wedge f = \nabla (\Div f) - \Delta f$,
therefore we find that
\begin{align}\label{eq.nab2v}
  \nabla^2 v = \nabla^2 \Lambda^{-2}\nabla\wedge \Gamma + \nabla^3\partial_3  \Lambda^{-4} \theta+ (\nabla^2 \Lambda^{-2} \theta)\otimes e_3 .
\end{align}

Since $v_0\in H^1\cap W^{1,p}$ for some $p>3$, we find that, thanks to \eqref{eq:v-gam-es1}

and using the embedding $B^{\frac{2}{r}}_{p,\infty} \hookrightarrow B^1_{\infty,1}$ for all  $r\in [1, \frac{2p}{p+3})$, we have
\begin{align}\label{es.Gam.B1inf}
  \|\Gamma\|_{L^r_T(B^1_{\infty,1})}  \lesssim  \|\Gamma\|_{\widetilde{L}^r_T (B^1_{\infty,1})}
  \lesssim \|\Gamma\|_{\widetilde{L}^r_T(B^{\frac{2}{r}}_{p,\infty})} \leq C e^{C E(T)^3},
\end{align}
which readily gives
\begin{align}\label{es.Gam.Lip}
  \|\nabla^2 \Lambda^{-2}\nabla\wedge \Gamma\|_{L^r_T (L^\infty)}
  \lesssim \|\Gamma\|_{L^r_T(B^1_{\infty,1})} \leq C e^{C E(T)^3},
\end{align}
for all $r\in [1, \frac{2p}{p+3})$ and $p>3$.

Now, we are going to show that $\nabla^3\partial_3  \Lambda^{-4} \theta$ and $\nabla^2 \Lambda^{-2} \theta\otimes e_3 $
belong to $L^\infty(\R^3 \times [0,T] )$ for all $T<T^*$ and for all initial temperature front  \eqref{eq.th0}.
It suffices to focus on the control of $\nabla^3\partial_3  \Lambda^{-4} \theta$, since the control of the other term is similar.

We shall apply some striated estimates pioneered in some works of J-Y. Chemin \cite{Chem88,Chem91} and further developed by P. Gamblin and X. Saint-Raymond \cite{GSR95}.
 We start by recalling a fundamental expression formula of $\partial_j\partial_k$ applied to an admissible system of vector fields (see \cite{GSR95}).
\begin{lemma}\label{lem:stra-exp}
  Let $0<\gamma <1$ and let $\mathcal{W}=\{W^i\}_{1\leq i\leq N}$ be an admissible system of $C^\gamma$-vector fields.
Then, there exist functions $a_{j,k}\in C^\gamma(\R^3)$ and $b_{j,k}^{l,i}\in C^\gamma(\R^3)$ such that for all $g\in \mathcal{S}'(\R^3)$,
\begin{align}\label{eq:par-jk-decom}
  \partial_j \partial_k  g = \Delta( a_{j,k}\, g )
  + \sum_{l,i,\sigma} \partial_l \partial_\sigma \big( b_{j,k}^{l,i} W^i_\sigma\, g \big),
\end{align}
where $\|a_{j,k}\|_{L^\infty} \leq 1$ and
\begin{equation}\label{eq:ajk-b-es}
\begin{split}
  \|b_{j,k}^{l,i}\|_{C^\gamma} & \leq C N^3 \Big( \|[\mathcal{W}]^{-1}\|_{L^\infty} \sum_{1\leq i \leq N} \|W^i\|_{C^\gamma} \Big)^{19},
\end{split}
\end{equation}
where the constant $C$ depends only on $\gamma$.
\end{lemma}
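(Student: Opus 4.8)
The plan is to reduce the operator identity \eqref{eq:par-jk-decom} to a pointwise algebraic decomposition of the principal symbol $\xi_j\xi_k$ adapted to the frame supplied by $\mathcal{W}$, and then to make the coefficients quantitative by exploiting admissibility. Admissibility guarantees $\frac{2}{N(N-1)}\sum_{\mu<\nu}|W^\mu\wedge W^\nu|^2\geq[\mathcal{W}]^{-4}>0$, so at each point $x$ some pair $(\mu,\nu)$ has $Z:=W^\mu\wedge W^\nu$ bounded away from $0$; then $\{W^\mu,W^\nu,Z\}$ is a frame and $Z$ is a normal direction to the span of $W^\mu,W^\nu$. The algebraic engine is the Lagrange identity
\begin{align*}
  (Z\cdot\xi)^2=|Z|^2|\xi|^2-|Z\wedge\xi|^2,
\end{align*}
which rewrites the purely normal part of $\xi_j\xi_k$ as a multiple of $|\xi|^2$ — the symbol of $\Delta$ — plus the term $|Z\wedge\xi|^2$, which lives in the tangent plane and hence is a combination of the products $\xi_l\,(W^i\cdot\xi)$.

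Next I would set $a_{j,k}$ equal to the $(j,k)$ entry of the normal projection $Z\otimes Z/|Z|^2$; since $|a_{j,k}|=|Z_jZ_k|/|Z|^2\leq1$ this yields $\|a_{j,k}\|_{L^\infty}\leq1$ directly. A short computation gives $Z\cdot RZ=0$ for the symmetric matrix $R:=\tfrac12(e_j\otimes e_k+e_k\otimes e_j)-a_{j,k}\,\mathrm{Id}$, so $R$ is tangential, i.e. it lies in the span of the matrices $\tfrac12(e_l\otimes W^i+W^i\otimes e_l)$ built from the two independent tangential fields $W^\mu,W^\nu$. I would then solve
\begin{align*}
  R=\sum_{l,i}b_{j,k}^{l,i}\,\tfrac12\big(e_l\otimes W^i+W^i\otimes e_l\big)
\end{align*}
by Cramer's rule, which expresses each $b_{j,k}^{l,i}$ as a ratio of cofactor polynomials in the entries of the $W^i$ over a Gram-type determinant comparable to $\sum_{\mu<\nu}|W^\mu\wedge W^\nu|^2\gtrsim[\mathcal{W}]^{-4}$.

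The Hölder bound \eqref{eq:ajk-b-es} would then follow from $\|fg\|_{C^\gamma}\lesssim\|f\|_{C^\gamma}\|g\|_{C^\gamma}$ together with a $C^\gamma$ estimate for the reciprocal of the uniformly positive determinant; tracking the homogeneity degrees of numerator and denominator in the $W^i$ through this quotient is precisely what produces the explicit exponent and the combinatorial factor $N^3$. To make $a_{j,k},b_{j,k}^{l,i}$ globally defined and $C^\gamma$ rather than dependent on a pointwise choice of $(\mu,\nu)$, I would glue the local solutions with a partition of unity subordinate to the open sets on which a given pair dominates; since each $W^i\in C^\gamma$ and the relevant wedge products are bounded below on these sets, the patched coefficients remain in $C^\gamma$ with a bound of the same form.

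Finally I would promote the symbol decomposition to the exact operator identity \eqref{eq:par-jk-decom}. The divergence-free hypothesis enters here: because $\divg W^i=0$ one has $\sum_\sigma\partial_\sigma(b_{j,k}^{l,i}W^i_\sigma g)=\partial_{W^i}(b_{j,k}^{l,i}g)$ with no zeroth-order error, which lets the tangential contributions assemble cleanly into the divergence-form operators appearing on the right. I expect the main obstacle to be exactly this last step: after expanding every product by the Leibniz rule, one must verify that the lower-order terms generated by differentiating the variable coefficients $a_{j,k},b_{j,k}^{l,i}$ are reproduced correctly, so that the full operator — not merely its principal symbol — equals $\partial_j\partial_k$, while all coefficient bounds stay uniform in $x$. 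The conceptual core (spanning via the normal frame) is transparent; the genuine work lies in the quantitative Cramer estimate leading to the sharp power in \eqref{eq:ajk-b-es} and in closing this operator-level bookkeeping.
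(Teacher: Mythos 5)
The algebraic core of your construction is the right one, and it coincides with what the paper actually does here: the paper does not prove this lemma but quotes it from Gamblin and Saint-Raymond \cite{GSR95}, recording in the remark that follows exactly the local formulas you propose, namely $a_{j,k}=(W^m\wedge W^n)_j(W^m\wedge W^n)_k/|W^m\wedge W^n|^2$ and $b_{j,k}^{l,i}$ equal to a homogeneous polynomial of degree $7$ in $(W^m,W^n)$ divided by $|W^m\wedge W^n|^4$, glued by a partition of unity over the regions where a given pair of fields dominates. Your identification of $a_{j,k}$ as the normal projection (giving $\|a_{j,k}\|_{L^\infty}\leq 1$ by Cauchy--Schwarz), the Lagrange identity, the fact that $Z^tRZ=0$ characterizes the span of the symmetrized matrices $\tfrac12(e_l\otimes W^i+W^i\otimes e_l)$ (a correct but unproven $5$-dimensional count you should at least state, since those six matrices satisfy one relation), and the Cramer-type formulas with denominator bounded below by a power of $[\mathcal{W}]^{-1}$ are all consistent with that construction.

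The gap is in the final step, which you yourself flag as the ``main obstacle'' and leave open, and the route you sketch for it would fail. You propose to expand $\Delta(a_{j,k}\,g)$ and $\partial_l\partial_\sigma(b_{j,k}^{l,i}W^i_\sigma g)$ by the Leibniz rule and verify that the lower-order terms produced by differentiating the variable coefficients cancel; but $a_{j,k}$, $b_{j,k}^{l,i}$ and $W^i$ are only $C^\gamma$, so these derivatives do not exist as functions and no such expansion is available. The divergence-free property you invoke is likewise neither assumed in the statement nor meaningful at $C^\gamma$ regularity, and it is not needed. The correct passage from the pointwise identity to \eqref{eq:par-jk-decom} is by duality: the quadratic-form identity $\xi_j\xi_k=a_{j,k}|\xi|^2+\sum_{l,i}b^{l,i}_{j,k}\xi_l(W^i\cdot\xi)$ is an identity of symmetric matrices at each $x$, so contracting with the symmetric Hessian of a test function gives the pointwise identity
\begin{align*}
  \partial_j\partial_k\phi=a_{j,k}\,\Delta\phi+\sum_{l,i,\sigma}b_{j,k}^{l,i}W^i_\sigma\,\partial_l\partial_\sigma\phi ,
\end{align*}
and pairing $g$ with this and moving the merely bounded coefficients across the duality bracket yields \eqref{eq:par-jk-decom} exactly in the stated divergence form, with no derivative ever landing on $a_{j,k}$ or $b_{j,k}^{l,i}$. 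With that replacement, and with the quantitative $C^\gamma$ bookkeeping for the quotient carried out explicitly to produce the exponent $19$, your argument closes.
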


\begin{remark}
  Note that $a_{j,k}$ and $b_{j,k}^{l,i}$ in Lemma \ref{lem:stra-exp} are constructed via a partition of unity from local expressions of the following form
\begin{align}
  a_{j,k}= \frac{(W^m\times W^n)_j (W^m\times W^n)_k}{|W^m\times W^n|^2},\quad b_{j,k}^{l,i} = \frac{P_{j,k}^{l,i}(W^m,W^n)}{|W^m\times W^n|^4},
\end{align}
where $P_{j,k}^{l,i}(W^m,W^n)$ are homogeneous polynomials of $(W^m,W^n)$ of degree $7$ and $W^m\times W^n $ does not vanish.
\end{remark}

The following result (see \cite{GSR95}) will be useful to deal with the second term in \eqref{eq:par-jk-decom}.
\begin{lemma}\label{lem:par-jk-Cgam}
  Under the assumptions of Lemma \ref{lem:stra-exp}, we have that
\begin{align}
  \|\Lambda^{-2}\big(\partial_j \partial_k g - \Delta (a_{j,k}g)\big)\|_{C^\gamma} \leq C N^3 \Big(\|[\mathcal{W}]^{-1}\|_{L^\infty} \sum_{1\leq i\leq N} \|W_i\|_{C^\gamma}\Big)^{19} \|g\|_{C^\gamma_{\mathrm{co}}},
\end{align}
where
\begin{align*}
  \|g\|_{C^\gamma_{\mathrm{co}}} := \|g\|_{L^\infty} + \|[\mathcal{W}]^{-1}\|_{L^\infty} \sum_{1\leq i\leq N} \big(\|W^i\|_{C^\gamma} \|g\|_{L^\infty} + \|\partial_\mathcal{W}g\|_{C^{-1,\gamma}}\big).
\end{align*}
\end{lemma}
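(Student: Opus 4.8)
The plan is to substitute the pointwise decomposition of Lemma \ref{lem:stra-exp} and reduce the statement to a single estimate for a zero-order operator. Indeed, from \eqref{eq:par-jk-decom} one has $\partial_j\partial_k g-\Delta(a_{j,k}g)=\sum_{l,i,\sigma}\partial_l\partial_\sigma(b_{j,k}^{l,i}W^i_\sigma g)$, so that
\[
  \Lambda^{-2}\big(\partial_j\partial_k g-\Delta(a_{j,k}g)\big)=\sum_{l,i,\sigma}\Lambda^{-2}\partial_l\partial_\sigma\big(b_{j,k}^{l,i}\,W^i_\sigma\, g\big).
\]
Since the coefficient bound \eqref{eq:ajk-b-es} already furnishes the factor $N^3\big(\|[\mathcal W]^{-1}\|_{L^\infty}\sum_{1\le i\le N}\|W^i\|_{C^\gamma}\big)^{19}$ together with the $C^\gamma$ control of the $b_{j,k}^{l,i}$, it suffices to prove, writing $b=b_{j,k}^{l,i}$ for a fixed pair $(l,i)$, the single estimate
\[
  \Big\|\sum_\sigma\Lambda^{-2}\partial_l\partial_\sigma\big(b\,W^i_\sigma\, g\big)\Big\|_{C^\gamma}\lesssim \|b\|_{C^\gamma}\Big(\|g\|_{L^\infty}+\|W^i\|_{C^\gamma}\|g\|_{L^\infty}+\|\partial_{W^i}g\|_{C^{-1,\gamma}}\Big);
\]
summing over the finitely many indices then reassembles exactly the conormal norm $\|g\|_{C^\gamma_{\mathrm{co}}}$.

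The key structural observation is that, because $\divg W^i=0$, the sum over $\sigma$ collapses into a conormal derivative,
\[
  \sum_\sigma\partial_\sigma\big(b\,W^i_\sigma\, g\big)=\divg\big((bg)W^i\big)=\partial_{W^i}(bg),
\]
whence $\sum_\sigma\Lambda^{-2}\partial_l\partial_\sigma(b\,W^i_\sigma g)=\sum_m\Lambda^{-2}\partial_l\partial_m\big((bW^i_m)\,g\big)$. Thus the object to bound is a genuinely zero-order Fourier multiplier $\Lambda^{-2}\partial_l\partial_m$, whose symbol is smooth and homogeneous of degree $0$ away from the origin, applied to the product of the smooth factor $bW^i\in C^\gamma$ with the merely bounded factor $g$. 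On each high-frequency block $\Delta_q$ ($q\ge 0$) this multiplier is bounded on $L^\infty$ with scale-invariant constants, hence on $C^\gamma=B^\gamma_{\infty,\infty}$; the low-frequency block $\Delta_{-1}$ is elementary and I treat it directly from the divergence form above. For the high-frequency part I then decompose the product $(bW^i_m)\,g$ by Bony's paraproduct. The two pieces in which $g$ sits in the low or balanced position, namely $T_g(bW^i_m)$ and $R(bW^i_m,g)$, land in $C^\gamma$ (the remainder is admissible since $0+\gamma>0$) with norm $\lesssim\|bW^i\|_{C^\gamma}\|g\|_{L^\infty}\lesssim\|b\|_{C^\gamma}\|W^i\|_{C^\gamma}\|g\|_{L^\infty}$, and the zero-order multiplier preserves $C^\gamma$; these account for the second term on the right-hand side.

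The only genuinely delicate contribution is the paraproduct $T_{bW^i_m}g=\sum_q S_{q-1}(bW^i_m)\Delta_q g$, in which the rough factor $g$ occupies the high-frequency slot, and this is the main obstacle. A crude estimate of $\sum_m\Lambda^{-2}\partial_l\partial_m\,S_{q-1}(bW^i_m)\Delta_q g$ produces the factor $2^{q\gamma}\|\Delta_q g\|_{L^\infty}$, which is unbounded in $q$ under $\|g\|_{L^\infty}$ alone; since $g$ is only bounded, a bare derivative $\nabla g$ is simply not controlled, and a naive product bound at total regularity $2\gamma-1\le 0$ (for $\gamma\le\tfrac12$) fails because the dyadic sum diverges. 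The resolution, which is the heart of the Chemin--Gamblin--Saint-Raymond argument \cite{GSR95}, is to use $\divg W^i=0$ a second time at the dyadic level: one recognizes $\sum_m S_{q-1}(W^i_m)\,\partial_m\Delta_q g$ as $\Delta_q\partial_{W^i}g$ up to commutators. The leading conormal piece is controlled by $\|\partial_{W^i}g\|_{C^{-1,\gamma}}$, since $\|\Delta_q\partial_{W^i}g\|_{L^\infty}\lesssim 2^{q(1-\gamma)}\|\partial_{W^i}g\|_{C^{-1,\gamma}}$ combines with the gain $2^{-q}$ coming from $\Lambda^{-2}\partial_l$ to yield a uniformly summable $2^{-q\gamma}$; the commutators $[\Delta_q,\,S_{q-1}W^i\cdot\nabla]$ and the terms in which $\partial_m$ falls on $S_{q-1}(bW^i_m)$ are in turn bounded by $\|W^i\|_{C^\gamma}\|g\|_{L^\infty}$. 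Assembling these bounds, taking the supremum over $q$ for the $C^\gamma$ norm, and summing over $(l,i,\sigma)$ with the coefficient estimate \eqref{eq:ajk-b-es} yields the claim.
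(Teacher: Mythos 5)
The paper does not actually prove this lemma: it is quoted from Gamblin and Saint-Raymond \cite{GSR95} and used as a black box, so there is no in-paper argument to compare against. Your reconstruction follows the route of that reference, and the two load-bearing ideas are both present and correctly placed: first, $\divg W^i=0$ collapses $\sum_\sigma\partial_\sigma(b\,W^i_\sigma g)$ into the conormal derivative $\partial_{W^i}(bg)$, reducing the statement to a multiplier of order zero acting on a product; second, in the dangerous paraproduct $T_{bW^i_m}g$ you use the divergence-free structure a second time at the dyadic level to trade $\sum_m S_{q-1}W^i_m\,\partial_m\Delta_q g$ for $\Delta_q\partial_{W^i}g$ up to commutators, so that the only high-frequency information required of $g$ is $\|\partial_{W^i}g\|_{C^{-1,\gamma}}$. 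This is precisely the mechanism behind the paper's own Lemma \ref{lem:parW-mDf}, and your frequency count $2^{-q}\cdot 2^{q(1-\gamma)}=2^{-q\gamma}$ is the right one. The treatment of $T_g(bW^i_m)$ and $R(bW^i_m,g)$ by $\|bW^i\|_{C^\gamma}\|g\|_{L^\infty}$ is also correct since $\gamma+0>0$.

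Two caveats are worth recording. First, the low-frequency block is not ``elementary'': $\Delta_{-1}\Lambda^{-2}\partial_l\partial_\sigma$, or equivalently $\Delta_{-1}\Lambda^{-2}\partial_l$ acting on $\partial_{W^i}(bg)$, is a homogeneous multiplier of nonpositive order cut off near the origin, and such operators are not bounded on $L^\infty$; some integrability of $g$ is needed there (in the paper's application this is supplied by $\theta\in L^1\cap L^\infty$, and the authors explicitly estimate $\|\Delta_{-1}I_1\|_{L^\infty}$ through $\|b^{m,n}_{\lambda,3}W^n_\nu\theta\|_{L^2\cap L^\infty}$), whereas $\|g\|_{C^\gamma_{\mathrm{co}}}$ alone does not provide it. Second, the claim that summing your per-index bound ``reassembles exactly the conormal norm'' is not literally accurate: your right-hand side carries $\sum_i\bigl(\|W^i\|_{C^\gamma}\|g\|_{L^\infty}+\|\partial_{W^i}g\|_{C^{-1,\gamma}}\bigr)$ without the weight $\|[\mathcal{W}]^{-1}\|_{L^\infty}$ present in $\|g\|_{C^\gamma_{\mathrm{co}}}$, and since that weight may be small, the stated inequality is not recovered from the crude coefficient bound \eqref{eq:ajk-b-es} alone; the missing power of $[\mathcal{W}]^{-1}$ must come from the homogeneity $b^{l,i}_{j,k}=P_{j,k}^{l,i}(W^m,W^n)/|W^m\times W^n|^4$ of the coefficients. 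Also note that the ``terms where $\partial_m$ falls on $S_{q-1}(bW^i_m)$'' produce $S_{q-1}(\partial_{W^i}b)$, whose control in $C^{\gamma-1}$ uses $\divg W^i=0$ yet again via \eqref{eq:prod-es}. None of this affects the way the lemma is used in the paper, where all quantities are simply finite, but a self-contained proof would need to address both points.
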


Now, let $\mathcal{W}(t)=\{W^i(t)\}_{1\leq i\leq 5}$ be a family of divergence-free vector fields which verifies \eqref{eq:Wi}.
According to \eqref{es:v-L1Cgam} and Lemma \ref{lem:flow}, we know that for all $T<T^*$, $v\in L^1(0,T; C^{1,\gamma}(\R^3))$
and the particle trajectory $\psi^{\pm}_t$ belongs to  $L^\infty(0,T; C^{1,\gamma}(\R^3))$ and satisfies that
\begin{align*}
  \|\psi_t^{\pm}\|_{L^\infty_T(C^{1,\gamma})} \leq
  C e^{C\int_0^T \|\nabla v\|_{L^\infty} \dd \tau} \Big( 1 + \int_0^T \|\nabla v\|_{C^\gamma} \dd \tau \Big)
  \leq C e^{\exp\{C E(T)\}}.
\end{align*}
 Note that the latter is obtained by using \eqref{eq:v-gam-es1} and \eqref{es.v.Lip}.
Therefore, thanks to the formula \eqref{exp:Wit}, we have that $W^i \in L^\infty(0,T;C^\gamma(\R^3))$ where
\begin{align}\label{eq:W-Cgam}
  \|W^i\|_{L^\infty_T(C^\gamma)} \leq C \|W_0^i\cdot\nabla \psi_t\|_{L^\infty_T(C^\gamma)}
  \|\psi^{-1}_t\|_{L^\infty_T(C^\gamma)}  \leq C e^{\exp\{C E(T)\}}.
\end{align}
Moreover, thanks to \cite{GSR95} (see Corollary 4.3)
and \eqref{es.v.Lip}, one has the following control
\begin{eqnarray}\label{eq:W-1es}
  \|[\mathcal{W}(t)]^{-1}\|_{L^\infty} &\leq& C e^{C \|\nabla v\|_{L^1_t(L^\infty)}} \|[\mathcal{W}_0]^{-1}\|_{L^\infty}  \nonumber \\
  &\leq& C e^{\exp\{C E(T)\}}.
\end{eqnarray}

Then, we consider the control of $\partial_j \partial_k \partial_\lambda \partial_3 \Lambda^{-4} \theta$ in $L^\infty$,
for all $j,k,\lambda=1,2,3$. According to \eqref{eq:par-jk-decom}, we infer that
\begin{align}\label{eq:the-decom0}
  \partial_\lambda\partial_3 \Lambda^{-2} \theta = - a_{\lambda,3} \theta + \sum_{m,n,\lambda} \partial_m \partial_\nu \Lambda^{-2} \big( b_{\lambda,3}^{m,n} W^n_\lambda \theta\big) :=  - a_{\lambda,3} \theta + {I_1} ,
\end{align}
and
\begin{align}\label{eq:the-decom}
  \partial_j\partial_k \partial_\lambda \partial_3 \Lambda^{-4} \theta
  = - a_{j,k} (\partial_\lambda\partial_3 \Lambda^{-2} \theta )
  + \sum_{l,i,\sigma} \partial_l\partial_\sigma \Lambda^{-2} \big(b_{j,k}^{l,i} W_\sigma^i (\partial_\lambda \partial_3 \Lambda^{-2} \theta) \big)
  := {I_2} + {I_3}.
\end{align}
First note that $\partial_{\mathcal{W}}\theta$ satisfies that
\begin{align}\label{eq:par-W-the}
  \partial_t \partial_{\mathcal{W}} \theta + v \cdot \nabla \partial_\mathcal{W} \theta =0,\qquad
  \partial_\mathcal{W} \theta|_{t=0} = \partial_{\mathcal{W}_0}\theta_0,
\end{align}
then the estimates \eqref{eq:T-sm3}, \eqref{es.v.Lip} and Lemma \ref{lem:str-reg} imply that for $\mu= \min\{\mu_1,\mu_2\}$,
\begin{align}\label{es.pWthtn}
  \|\partial_\mathcal{W} \theta(t)\|_{C^{-1,\mu}} \leq \|\partial_{\mathcal{W}_0} \theta_0\|_{C^{-1,\mu}}
  e^{C\int_0^t \|\nabla v(\tau)\|_{L^\infty}\dd\tau} \leq C e^{\exp\{C E(T)\}} .
\end{align}
For the term ${I}_1$, by making use of Lemma \ref{lem:par-jk-Cgam},
and \eqref{eq:W-Cgam},\eqref{eq:W-1es}, \eqref{es.pWthtn}, we get that
\begin{align*}
  \|I_1\|_{C^\mu} \leq C \Big(\|[\mathcal{W}(t)]^{-1}\|_{L^\infty}
  \sum_{1\leq i\leq 5}\|W^i(t)\|_{C^\mu}\Big)^{19} \|\theta(t)\|_{C^\mu_{\mathrm{co}}}
  \leq C e^{\exp\{C E(T)\}} ,
\end{align*}
and the following control
\begin{eqnarray*}
  \|I_1\|_{B^0_{\infty,\infty}} &\leq& \|\Delta_{-1} {I}(t)\|_{L^\infty} + \sup_{q\in \N} \|\Delta_q \mathrm{I}(t)\|_{L^\infty} \\
  &\lesssim&  \sum_{m,n,\nu} \|b_{\lambda,3}^{m,n} W^n_\nu \theta(t)\|_{L^2\cap L^\infty} \\
  &\lesssim&  \sum_{m,n} \|b_{\lambda,3}^{m,n}\|_{L^\infty}
  \|W^n\|_{L^\infty} \|\theta(t)\|_{L^2\cap L^\infty} \\
  &\lesssim&  e^{\exp\{C E(T)\}},
\end{eqnarray*}
hence, by interpolation inequality, one finds that
\begin{eqnarray}\label{eq:I-es}
  \|I_1\|_{L^\infty_T(L^\infty)} \leq C \big(\|I_1\|_{L^\infty_T(B^0_{\infty,\infty})} + \|I_1\|_{L^\infty_T(C^\gamma)} \big)
  \lesssim  e^{\exp\{C E(T)\}}.
\end{eqnarray}
Therefore, we obtain
\begin{align}\label{es:par-mu3-the}
  \|\partial_\lambda \partial_3 \Lambda^{-2}\theta\|_{L^\infty_T(L^\infty)} + \|I_2\|_{L^\infty_T(L^\infty)}
  \lesssim   e^{\exp\{C E(T)\}}.
\end{align}
The control of ${I}_3$ is analogous to the proof of \eqref{eq:I-es}. Indeed, by using \eqref{es:par-mu3-the} together with the following estimate (which is an easy consequence of \eqref{eq:par-W-mDf} and \eqref{eq:par-W-the})
\begin{eqnarray*}
  \|\partial_\mathcal{W} (\partial_j \partial_k \Lambda^{-2} \theta)\|_{L^\infty_T(B^{\mu-1}_{\infty,\infty})}
  &\lesssim& \|\mathcal{W}\|_{L^\infty_T(C^\mu)}  \|\theta\|_{L^\infty_T(L^2\cap L^\infty)}
  +  \|\partial_\mathcal{W}\theta\|_{L^\infty_T(C^{-1,\mu})} \\
   &\lesssim&  e^{\exp\{C E(T)\}},
\end{eqnarray*}
we find that
\begin{align}
  \|I_3\|_{L^\infty_T(L^\infty)} \lesssim  e^{\exp\{C E(T)\}}.
\end{align}
Collecting all the above estimates allows us to conclude that $\|\nabla^3 \partial_3 \Lambda^{-4}\theta\|_{L^\infty_T(L^\infty)}
\lesssim  e^{\exp\{C E_0(T)\}}$.

Finally, the latter control and \eqref{es.Gam.Lip} give that, for all $r\in [1, \frac{2p}{p+3})$ and $ p>3$,
\begin{align}\label{eq:nab2v-es}
  \|\nabla^2v\|_{L^r_T (L^\infty)} \lesssim  e^{\exp\{C E(T)\}}.
\end{align}
Then using Lemma \ref{lem:flow}, we conclude that the particle trajectory $\psi^\pm_t \in L^\infty([0,T], W^{2,\infty})$ and that the $W^{2,\infty}$ regularity is preserved, that is
\begin{equation*}
  \partial D_0\in W^{2,\infty} \quad \Longrightarrow \quad \partial D(t)= \psi_t(\partial D_0)\in L^\infty([0,T], W^{2,\infty}).
\end{equation*}
\qed

\subsection{Propagation  of the $C^{2,\gamma}$ regularity of the temperature fronts}\label{sec.C2ga}

The goal of this subsection is to prove that the $C^{2,\gamma}$ regularity of the temperature front is preserved along the evolution. 

By using Lemma \ref{lem:sr-cond}, we see that it suffices to prove that \eqref{eq:targ-sr} holds for $k=2$, $\gamma\in (0,1)$.
In fact, in the sequel we shall prove an even stronger result, namely that
\begin{align}\label{eq:targ5}
  \mathcal{W} \in L^\infty(0,T; C^{1,\gamma}(\R^3)),
\end{align}
where the admissible conormal vector system $\mathcal{W}=\{W^i\}_{1\leq i\leq 5}$ verifies \eqref{eq:Wi}. \vskip1mm

 Applying the operator $\nabla^2$ to equation \eqref{eq:Wi} gives  that
\begin{align}\label{eq.nab2W}
  \partial_t (\nabla^2 \mathcal{W}) + v\cdot\nabla (\nabla^2 \mathcal{W}) = \partial_\mathcal{W} \nabla^2 v
  + 2 \nabla \mathcal{W} \cdot\nabla^2 v + \nabla^2 \mathcal{W}\cdot \nabla v - \nabla^2 v \cdot \nabla \mathcal{W}
  - 2 \nabla v \cdot \nabla^2\mathcal{W},
\end{align}
where $\mathcal{W}=\{W^i\}_{1\leq i\leq 5}$ and $\mathcal{W}\cdot\nabla = \{W^i\cdot\nabla \}_{1\leq i\leq 5}$
are both vector-valued. Thanks to \eqref{eq:T-sm2}, we find that for all $t\in [0,T]$,
\begin{align}\label{es.C2gn}
  \| \nabla^2 \mathcal{W}(t)\|_{B^{\gamma-1}_{\infty,\infty}} & \lesssim \| \nabla^2 \mathcal{W}_0\|_{B^{\gamma-1}_{\infty,\infty}} +  \int_0^t \|\nabla v(\tau)\|_{L^\infty} \|\nabla^2 \mathcal{W}(\tau)\|_{B^{\gamma-1}_{\infty,\infty}}\dd \tau  \nonumber \\
  & \quad + \int_0^t \|\partial_{\mathcal{W}} \nabla^2 v(\tau)\|_{B^{\gamma-1}_{\infty,\infty}} \dd \tau +  \int_0^t \|(\nabla \mathcal{W}\cdot\nabla^2 v, \nabla^2 v\cdot\nabla \mathcal{W})\|_{B^{\gamma-1}_{\infty,\infty}} \dd \tau  \nonumber \\
  & \quad +  \int_0^t \|(\nabla^2 \mathcal{W}\cdot\nabla v, \nabla v\cdot \nabla^2 \mathcal{W}) \|_{B^{\gamma-1}_{\infty,\infty}}\dd \tau.
\end{align}
Recalling that $\mathcal{W}_0 = \{W^i_0\}_{1\leq i\leq 5}$ given by \eqref{eq:Wi0} satisfies $\mathcal{W}_0\in C^{1,\gamma}= B^{1+\gamma}_{\infty,\infty}$,
we immediately see that $\|\nabla^2 \mathcal{W}_0\|_{B^{\gamma-1}_{\infty,\infty}} \lesssim \|\mathcal{W}_0\|_{B^{\gamma+1}_{\infty,\infty}} <\infty$.
Applying the product estimate \eqref{eq:prod-es} and \eqref{eq:W-Cgam}, \eqref{eq:nab2v-es} to the last two terms of \eqref{es.C2gn}, we get
\begin{equation}\label{es.C2gn1}
\begin{split}
  \int_0^t \|(\nabla \mathcal{W}\cdot\nabla^2 v, \nabla^2 v\cdot\nabla \mathcal{W})\|_{B^{\gamma-1}_{\infty,\infty}} \dd \tau  &\lesssim  \|\nabla^2 v\|_{L^1_t(L^\infty)}\|\nabla \mathcal{W}\|_{L^\infty_t(B^{\gamma-1}_{\infty,\infty})}\nonumber\\
  &\lesssim \|\nabla^2 v\|_{L^1_t(L^\infty)}\| \mathcal{W}\|_{L^\infty(C^\gamma)} \leq C e^{\exp \{CE(T)\}} ,
\end{split}
\end{equation}
and
\begin{align}\label{es.C2gn2}
  \int_0^t \|(\nabla^2 \mathcal{W}\cdot\nabla v, \nabla v\cdot \nabla^2 \mathcal{W})\|_{B^{\gamma-1}_{\infty,\infty}}\dd \tau
  \lesssim \int_0^t \|\nabla v(\tau)\|_{L^\infty}\|\nabla^2 \mathcal{W}(\tau) \|_{B^{\gamma-1}_{\infty,\infty}}\dd \tau.
\end{align}
In order to control the term $  \|\partial_{\mathcal{W}} \nabla^2 v\|_{L^1_t(B^{\gamma-1}_{\infty,\infty})}$ in \eqref{es.C2gn},
we use the identity \eqref{eq.nab2v} to find  that
\begin{eqnarray}\label{es.C2gn3}
  \|\partial_{\mathcal{W}} \nabla^2 v\|_{L^1_t(B^{\gamma-1}_{\infty,\infty})} &\lesssim&  \|\partial_{\mathcal{W}}(\nabla^2 \Lambda^{-2}\nabla\wedge \Gamma)\|_{L^1_t(B^{\gamma-1}_{\infty,\infty})} +\|\partial_{\mathcal{W}}(\nabla^3\partial_3  \Lambda^{-4} \theta)\|_{L^1_t(B^{\gamma-1}_{\infty,\infty})} \nonumber \\
  && \ + \ \|\partial_{\mathcal{W}}(\nabla^2 \Lambda^{-2} \theta )\|_{L^1_t(B^{\gamma-1}_{\infty,\infty})} \\
  &:=&  K_{1}+ K_{2}+ K_{3}. \nonumber
\end{eqnarray}
In order to control $K_1$, we use \eqref{eq:prod-es4} together with the fact that $\nabla \Delta_{-1}$ is a bounded operator on $L^\infty$, we find
\begin{align}\label{es.C2gn31}
  K_1\lesssim & \int_0^t \|\mathcal{W}(\tau)\|_{B^1_{\infty,1}}
  \|\nabla\Gamma(\tau)\|_{B^{\gamma-1}_{\infty,\infty}}  \dd \tau + \|\partial_{\mathcal{W}} \nabla\Gamma\|_{L^1_t(B^{\gamma-1}_{\infty,\infty})}\nonumber\\
  \lesssim & \int_0^t \|\mathcal{W}(\tau)\|_{C^{1,\gamma}} \|\Gamma(\tau)\|_{B^\gamma_{\infty,\infty}} \dd \tau + \|\partial_{\mathcal{W}} \nabla \Gamma\|_{L^1_t(B^{\gamma-1}_{\infty,\infty})} ,
\end{align}
It remains to control $K_2$ and $K_3$. To do so, it suffices to observe that
\begin{align}\label{es.C2gnK23}
  K_2+K_3 &\lesssim \int_0^t \|\mathcal{W}(\tau)\|_{B^1_{\infty,1}}
  \|\theta(\tau)\|_{B^{\gamma-1}_{\infty,\infty}} \dd \tau + \|\partial_{\mathcal{W}} \theta\|_{L^1_t(B^{\gamma-1}_{\infty,\infty})} \nonumber\\
  &\lesssim \|\mathcal{W}\|_{L^1_t(C^{1,\gamma})} \|\theta_0\|_{L^\infty}  + \|\partial_{\mathcal{W}} \theta\|_{L^1_t(B^{\gamma-1}_{\infty,\infty})}.
\end{align}
Note that Lemma \ref{lem:str-reg} now implies that $\partial_{\mathcal{W}_0}\theta_0\in C^{-1,\gamma}$ and therefore, following the same approach as the proof of \eqref{es.pWthtn}, one obtains
\begin{align}\label{es.pWthtn2}
  \|\partial_{\mathcal{W}}\theta(t)\|_{B^{\gamma-1}_{\infty,\infty}} \leq \|\partial_{\mathcal{W}_0} \theta_0\|_{B^{\gamma-1}_{\infty,\infty}} e^{C\int_0^t \|\nabla v\|_{L^\infty}\dd\tau}
  \lesssim e^{\exp\{CE(T)\}} .
\end{align}
Finally, by collecting all the above estimates  \eqref{es.C2gn}, \eqref{es.pWthtn2}, one finds that
\begin{align}\label{es.C2gn-2}
   \|\mathcal{W}(t)\|_{B^{\gamma+1}_{\infty,\infty}} + \|\partial_{\mathcal{W}} \nabla^2 v\|_{L^1_t(B^{\gamma-1}_{\infty,\infty})} &\lesssim \|\mathcal{W}\|_{L^\infty_t(L^\infty)} + \| \nabla^2 \mathcal{W}(t)\|_{B^{\gamma-1}_{\infty,\infty}} + \|\partial_{\mathcal{W}} \nabla^2 v\|_{L^1_t(B^{\gamma-1}_{\infty,\infty})} \nonumber \\
  &\lesssim e^{CE_1(T)^3} + \|\partial_{\mathcal{W}} \nabla \Gamma\|_{L^1_t(B^{\gamma-1}_{\infty,\infty})}\nonumber \\
  &\quad  + \int_0^t \|\mathcal{W}(\tau)\|_{B^{\gamma+1}_{\infty,\infty}} \big(\|\Gamma\|_{B^1_{\infty,\infty}} + \|\nabla v\|_{L^\infty} + 1 \big) \dd \tau.
\end{align}

We now study the control of the term $\|\partial_{\mathcal{W}} \nabla \Gamma\|_{L^1_t(B^{\gamma-1}_{\infty,\infty})}$ in \eqref{es.C2gn31}.
Since we have $[\partial_{\mathcal{W}}, \partial_t + v\cdot\nabla]=0$, it follows from equation \eqref{eq.Gamm} that
\begin{align}\label{eq.pXGa}
  \partial_t\partial_{\mathcal{W}} \Gamma + v\cdot\nabla \partial_{\mathcal{W}}\Gamma -\Delta \partial_{\mathcal{W}}\Gamma & = -[\Delta,\partial_{\mathcal{W}}]\Gamma +  \partial_{\mathcal{W}}(\Omega\cdot \nabla v)
  + \partial_{\mathcal{W}} \big([\mathcal{R}_{-1},v\cdot\nabla] \theta \big) \nonumber \\
  & = : F_{1,1}+F_{1,2}+F_{1,3}.
\end{align}
Thanks to the smoothing estimate \eqref{es.sm1}, we obtain that for all $ \gamma' \in (0,1-\frac{3}{p})$,
\begin{eqnarray}\label{es.pXGa}
 \|\partial_{\mathcal{W}} \Gamma(t)\|_{B^{\gamma'-1}_{\infty,1}} + \|\partial_{\mathcal{W}}\Gamma\|_{L^2_t(B^{\gamma'}_{\infty,1})} &+& \|\partial_{\mathcal{W}} \Gamma\|_{L^1_t(B^{\gamma'+1}_{\infty,1})} \leq C(1+t)  \Big(\|\partial_{\mathcal{W}_0} \Gamma_0\|_{B^{\gamma'-1}_{\infty,1}}  \\
  &+& \ \int_0^t\|\nabla v(\tau)\|_{L^\infty}   \|\partial_{\mathcal{W}} \Gamma(\tau)\|_{B^{\gamma'-1}_{\infty,1}} \dd\tau  +  \sum_{i=1}^3 \|F_{1,i}\|_{L^1_t(B^{\gamma'-1}_{\infty,1})} \Big). \nonumber
\end{eqnarray}
By using the identity $\Gamma_0 = \Omega_0 - \RR_{-1}\theta_0$ and the following embedding
\begin{align}\label{embd.Lp}
  L^p\hookrightarrow B^0_{p,\infty}\hookrightarrow B^{\gamma'-1+\frac 3p}_{p,1}\hookrightarrow B^{\gamma'-1}_{\infty,1}, \quad \textrm{for all}\,\,0<\gamma'<1-\frac{3}{p},
\end{align}
we find
\begin{align}\label{es.pXGa0}
  \|\partial_{\mathcal{W}_0} \Gamma_0\|_{B^{\gamma'-1}_{\infty,1}}\lesssim \, & \|\partial_{\mathcal{W}_0} \nabla v_0\|_{L^p} + \|\partial_{\mathcal{W}_0}\RR_{-1}\theta_0\|_{L^p}\nonumber\\
  \lesssim \, & \|\partial_{\mathcal{W}_0} v_0\|_{W^{1,p}} + \|\nabla \mathcal{W}_0\|_{L^\infty} \|v_0\|_{W^{1,p}} + \|\mathcal{W}_0\|_{L^\infty} \| \theta_0\|_{L^p} <\infty.
\end{align}
By using the product estimate \eqref{eq:prod-es}, we get that
\begin{align}\label{es.F11}
  \|F_{1,1}\|_{L^1_t(B^{\gamma'-1}_{\infty,1})} &\leq \|\Delta{\mathcal{W}}\cdot\nabla \Gamma\|_{L^1_t(B^{\gamma'-1}_{\infty,1})} + 2 \|\nabla{\mathcal{W}}\cdot\nabla^2\Gamma\|_{L^1_t(B^{\gamma'-1}_{\infty,1})} \nonumber \\
  &\lesssim \int_0^t \Big(\|\Delta{\mathcal{W}}(\tau)\|_{B^{\gamma'-1}_{\infty,1}} \|\nabla\Gamma(\tau)\|_{L^\infty} +
  \|\nabla{\mathcal{W}}(\tau)\|_{L^\infty} \|\nabla^2 \Gamma(\tau)\|_{B^{\gamma'-1}_{\infty,1}} \Big)\dd\tau \nonumber\\
  &\lesssim \int_0^t\|\mathcal{W}(\tau)\|_{B^{\gamma'+1}_{\infty,1}} \|\Gamma(\tau)\|_{B^{\gamma'+1}_{\infty,1}} \dd\tau.
\end{align}
For the term $F_{1,2}$, we easily get that
\begin{align}\label{es.F12}
  \|F_{1,2}\|_{L^1_t(B^{\gamma'-1}_{\infty,1})} &\lesssim \|{\mathcal{W}}\|_{L^\infty_t(L^\infty)} \|\Omega\cdot \nabla v\|_{L^1_t(B^{\gamma'}_{\infty,1})} \nonumber \\
  &\lesssim e^{\exp\{C E(T)\}}  \int_0^t\|\Omega(\tau)\|_{B^{\gamma'}_{\infty,1}}\|\nabla v(\tau)\|_{B^{\gamma'}_{\infty,1}} \dd\tau \nonumber \\
  &\lesssim  e^{\exp\{C E(T)\}} \|\nabla v\|_{L^2_t (B^{\gamma'}_{\infty,1})}^2 .
\end{align}
Then, by making use of \eqref{eq.v.GamThe}, \eqref{es.Gam-Besov} together with the (continuous) embedding
$$\widetilde{L}^2_t(B^1_{p,\infty}) \hookrightarrow \widetilde{L}^2_t(B^{\gamma'}_{\infty,1}) \hookrightarrow L^2_t(B^{\gamma'}_{\infty,1})$$ for all $\gamma'\in (0,1-\frac{3}{p})$,
we find
\begin{align}\label{es.F12nv}
  \|\nabla v\|_{L^2_t(B^{\gamma'}_{\infty,1})} &\leq \|\Lambda^{-2}\nabla \nabla\wedge \Gamma\|_{L^2_t(B^{\gamma'}_{\infty,1})}
  + \|\nabla^2 \partial_3 \Lambda^{-4}\theta \|_{L^2_t(B^{\gamma'}_{\infty,1})} + \|\nabla \Lambda^{-2}\theta\|_{L^2_t(B^{\gamma'}_{\infty,1})} \nonumber \\
  &\lesssim  \| \Gamma\|_{L^2_t(L^p\cap B^{\gamma'}_{\infty,1})} + \| \Lambda^{-1} \theta\|_{L^2(L^p)} + \|\theta\|_{L^2_t(B^{\gamma'-1}_{\infty,1})} \nonumber \\
  &\lesssim  \| \Gamma\|_{\widetilde{L}^2_t(B^1_{p,\infty})} + \| \theta\|_{L^2_t (L^{\frac{3p}{p+3}})} + \|\theta\|_{L^2_t(L^\infty)} \nonumber \\
  &\lesssim  e^{C E(T)^3}.
\end{align}
This inequality applied to \eqref{es.F12} gives that
\begin{align}\label{es.F12b}
  \|F_{1,2}\|_{L^1_t(B^{\gamma'-1}_{\infty,1})}\lesssim e^{\exp\{C E(T)\}}.
\end{align}
Recalling that $[\RR_{-1},v\cdot\nabla]\theta$ is given by \eqref{def:R-1comm} and by using Lemma \ref{lem:comm-es} and \eqref{es.v.W1p},
we obtain that
\begin{align}\label{es.F13}
  \|F_{13}\|_{L^1_t(B^{\gamma'-1}_{\infty,1})}&\lesssim \|{\mathcal{W}}\|_{L^\infty_t(L^\infty)}\|\nabla[\RR_{-1},v\cdot\nabla]\theta
  \|_{L^1_t(B^{\gamma'-1}_{\infty,1})}\nonumber\\
  &\lesssim \|{\mathcal{W}}\|_{L^\infty_t(L^\infty)}\|[\RR_{-1},v\cdot\nabla]\theta
  \|_{L^1_t(B^{1}_{p,\infty})}\nonumber\\
  &\lesssim \|{\mathcal{W}}\|_{L^\infty_t(L^\infty)} \big(\|\nabla v\|_{L^1_t(L^p)}\|\theta\|_{L^\infty_t(L^\infty)}+t\|v\|_{L^\infty_t(L^2)}\|\theta\|_{L^\infty_t(L^2)}\big) \nonumber \\
  &\lesssim e^{\exp\{C E(T)\}}.
\end{align}
Hence, applying \eqref{es.pXGa0}, \eqref{es.F11} and \eqref{es.F12b}, \eqref{es.F13} to \eqref{es.pXGa}, we infer that
\begin{align}\label{es.pXGa-b}
  &\quad \|\partial_{\mathcal{W}} \Gamma(t)\|_{B^{\gamma'-1}_{\infty,1}} + \|\partial_{\mathcal{W}} \Gamma\|_{L^2_t(B^{\gamma'}_{\infty,1})}
  + \|\partial_{\mathcal{W}} \Gamma\|_{L^1_t(B^{\gamma'+1}_{\infty,1})} \nonumber \\
  & \lesssim (1+t)\Big( e^{\exp\{C E(T)\}} +  \int_0^t\|\nabla v\|_{L^\infty}   \|\partial_{\mathcal{W}} \Gamma(\tau)\|_{B^{\gamma'-1}_{\infty,1}} \dd\tau  
  + \int_0^t \|\mathcal{W}(\tau)\|_{B^{\gamma'+1}_{\infty,1}} \|\Gamma\|_{B^{\gamma'+1}_{\infty,1}} \dd\tau \Big).
\end{align}

Noticing that $\partial_{\mathcal{W}} \nabla f= \nabla \partial_{\mathcal{W}} f - \nabla \mathcal{W}\cdot\nabla f$,
and using the product estimate \eqref{eq:prod-es}, we may control the term $\|\partial_{\mathcal{W}} \nabla \Gamma\|_{L^1_t(B^{\gamma-1}_{\infty,\infty})}$ in  \eqref{es.C2gn-2} as follows
\begin{eqnarray}\label{es.pXnGa}
  \|\partial_{\mathcal{W}} \nabla \Gamma\|_{L^1_t(B^{\gamma-1}_{\infty,\infty})}
  &\lesssim& \|\partial_{\mathcal{W}} \Gamma\|_{L^1_t(B^{\gamma}_{\infty,\infty})}+\|\nabla \mathcal{W}\cdot\nabla \Gamma\|_{L^1_t(B^{\gamma-1}_{\infty,\infty})}\nonumber\\
  &\lesssim& \|\partial_{\mathcal{W}} \Gamma\|_{L^1_t(B^{\gamma'+1}_{\infty,1})} + \|\nabla \mathcal{W}\|_{L^\infty_t(B^{\gamma-1}_{\infty,\infty})} \| \nabla\Gamma\|_{L^1_t(L^\infty)}\nonumber\\
  &\lesssim& \|\partial_{\mathcal{W}} \Gamma\|_{L^1_t(B^{\gamma'+1}_{\infty,\infty})} + e^{\exp\{CE(T)\}},
\end{eqnarray}
where in the last step we have used \eqref{eq:W-Cgam} and the following estimate (in view of \eqref{es.Gam-Besov} and $B^2_{p,\infty}(\R^3) \hookrightarrow B^{1+\gamma'}_{\infty,1}(\R^3)$ for all $\gamma' \in (0,1- 3/p)$
\begin{align}\label{es.F1GL1}
 \| \nabla\Gamma\|_{L^1_t(L^\infty)}\lesssim \|\Gamma\|_{L^1_t(B^{\gamma'+1}_{\infty,1})}  \lesssim \|\Gamma\|_{\widetilde{L}^1_T(B^2_{p,\infty})}
  \lesssim  e^{CE(T)^3}.
\end{align}
Hence, by using the embedding $B^{\gamma+1}_{\infty,\infty}\hookrightarrow B^{\gamma'+1}_{\infty,1}$ for all $\gamma'\in (0,\gamma)$,
we collect the above estimates \eqref{es.C2gn-2}, \eqref{es.pXGa-b} and \eqref{es.pXnGa} to get that for all $\gamma'  \in (0,\min\{\gamma,1-3/p\})$,
\begin{align*}
  &\quad \|\mathcal{W}(t)\|_{B^{\gamma+1}_{\infty,\infty}} +  \|\partial_{\mathcal{W}} \Gamma(t)\|_{B^{\gamma'-1}_{\infty,1}} + \|\partial_{\mathcal{W}} \Gamma\|_{L^2_t(B^{\gamma'}_{\infty,1})} + \|\partial_{\mathcal{W}} \Gamma\|_{L^1_t(B^{\gamma'+1}_{\infty,1})}
  + \|\partial_{\mathcal{W}} \nabla^2 v\|_{L^1_t(B^{\gamma-1}_{\infty,\infty})} \nonumber \\
  & \lesssim e^{\exp\{CE(T)\}} + (1+t)\int_0^t \big(\|\mathcal{W}(\tau)\|_{B^{\gamma+1}_{\infty,\infty}} + \|\partial_{\mathcal{W}} \Gamma(\tau)\|_{B^{\gamma'-1}_{\infty,1}} \big)\big(\|\Gamma\|_{B^{\gamma'+1}_{\infty,1}} + \|\nabla v\|_{L^\infty} + 1 \big) \dd \tau,
\end{align*}
which, together with the use of Gr\"onwall's inequality and \eqref{es.v.Lip}, \eqref{es.F1GL1}, gives that
\begin{eqnarray}\label{es.C2gn-3}
  \| \mathcal{W}\|_{L^\infty_T(B^{\gamma+1}_{\infty,\infty})} &+& \|\partial_{\mathcal{W}} \Gamma\|_{L^\infty_T(B^{\gamma'-1}_{\infty,1})}
  + \|\partial_{\mathcal{W}} \Gamma\|_{L^2_t(B^{\gamma'}_{\infty,1})} + \|\partial_{\mathcal{W}} \Gamma\|_{L^1_T(B^{\gamma'+1}_{\infty,1})} + \|\partial_{\mathcal{W}} \nabla^2 v\|_{L^1_T(B^{\gamma-1}_{\infty,\infty})} \nonumber \\
  &\lesssim&  e^{\exp\{E(T)\}} \exp\Big\{CT +C(1+T) \|\nabla v\|_{L^1_T(L^\infty)} + (1+T)\|\Gamma\|_{L^1_T(B^{\gamma'+1}_{\infty,1})} \Big\}\nonumber \\
  &\lesssim&  e^{\exp \{C E(T)^3\}} .
\end{eqnarray}
So we have proved the persistence of the $C^{2,\gamma}$ regularity of the temperature front.

Besides, by following the same approach as the estimates \eqref{es.C2gn3}, \eqref{es.pWthtn2} and using \eqref{eq:prod-es4}, \eqref{es.F12nv}, \eqref{es.C2gn-3}, we obtain that for all $\gamma'\in(0, \min\{\gamma,1-3/p\})$,
\begin{eqnarray*}
   \|\partial_{\mathcal{W}}  \nabla^2 v\|_{L^2_T(B^{\gamma'-1}_{\infty,1})}
  &\lesssim&  \|\partial_{\mathcal{W}}(\nabla^2 \Lambda^{-2}\nabla\wedge \Gamma)\|_{L^2_T(B^{\gamma'-1}_{\infty,1})} + \|\partial_{\mathcal{W}}(\nabla^3\partial_3  \Lambda^{-4} \theta, \nabla^2 \Lambda^{-2} \theta )\|_{L^2_T(B^{\gamma'-1}_{\infty,1})} \nonumber \\
  &\lesssim& \|\mathcal{W}\|_{L^\infty_T(B^1_{\infty,1})} \big( \|\nabla \Gamma\|_{L^2_T(B^{\gamma'-1}_{\infty,1})}
  + \|\theta\|_{L^2_T(L^1\cap L^\infty)} \big) + \|\partial_{\mathcal{W}} \nabla \Gamma\|_{L^2_T(B^{\gamma'-1}_{\infty,1})} \\
  && \ + \ \|\partial_{\mathcal{W}}\theta\|_{L^2_T(B^{\gamma'-1}_{\infty,1})} \\
  &\lesssim& \|\mathcal{W}\|_{L^\infty_T(B^{\gamma+1}_{\infty,\infty})} \big(\|\Gamma\|_{\widetilde{L}^2_T(B^1_{p,\infty})} + T^{1/2}\|\theta_0\|_{L^1\cap L^\infty} \big) + \|\partial_{\mathcal{W}} \Gamma\|_{L^2_T(B^{\gamma'}_{\infty,1})} \\
  && \ + \ \|\partial_{\mathcal{W}_0}\theta_0\|_{B^{\gamma'-1}_{\infty,1}} e^{\exp\{C E(T)\}} \\
  &\lesssim&  e^{\exp\{C E(T)^3\}} ,
\end{eqnarray*}
and
\begin{eqnarray}\label{es.navL2tLip}
  \|\partial_{\mathcal{W}}\nabla v\|_{L^2_T(B^{\gamma'}_{\infty,1})} &\lesssim& \|\Delta_{-1} \partial_{\mathcal{W}}\nabla v\|_{L^2_T(L^\infty)} + \|\nabla (\partial_{\mathcal{W}} \nabla v)\|_{L^2_T(B^{\gamma'-1}_{\infty,1})} \nonumber \\
  &\lesssim& \|\mathcal{W}\|_{L^\infty_T(L^\infty)} \|\nabla v\|_{L^2_T(L^\infty)} + \|\nabla \mathcal{W}\|_{L^\infty_T(L^\infty)} \|\nabla v\|_{L^2_T(B^{\gamma'}_{\infty,1})} + \|\partial_{\mathcal{W}}\nabla^2 v\|_{L^2_T(B^{\gamma'-1}_{\infty,1})} \nonumber \\
  &\lesssim&  e^{\exp\{C E(T)^3\}}.
\end{eqnarray}

Using the notation \eqref{norm:BBsln2}, together with the estimates \eqref{eq:v-gam-es1}, \eqref{es:Gam.Lp},
\eqref{es.F12nv} and \eqref{es.F1GL1}--\eqref{es.navL2tLip}, we see that for all $\gamma\in (0,1)$  and  $\gamma'\in (0,\min \{\gamma,1-3/p\})$,
\begin{align}\label{es.el=1}
    &\|\WW \|_{L^\infty_T (\bb^{\gamma+1,0}_{\infty, \mathcal{W}})} + \|\nabla^2 v\|_{L^1_T (\bb^{\gamma-1,1}_{\infty,\mathcal{W}})}+ \|\nabla v\|_{L^2_T(\bb^{\gamma',1}_\mathcal{W})}
  + \|\Gamma\|_{L^\infty_T (\bb^{\gamma'-1,1}_{\mathcal{W}})} + \|\Gamma\|_{L^2_T (\bb^{\gamma',1}_{\mathcal{W}})} + \|\Gamma\|_{L^1_T (\bb^{\gamma'+1,1}_{\mathcal{W}})} \nonumber\\
  &=\|\WW \|_{L^\infty_T(B^{\gamma+1}_{\infty,\infty})} + \|(\nabla^2 v, \partial_{\mathcal{W}} \nabla^2 v)\|_{L^1_T(B^{\gamma-1}_{\infty,\infty})} + \|(\nabla v, \partial_{\WW}\nabla v) \|_{L^2_T(B^{\gamma'}_{\infty,1})}   +  \|(\Gamma, \partial_{\mathcal{W}} \Gamma)\|_{L^\infty_T(B^{\gamma'-1}_{\infty,1})} \nonumber  \\
  & \quad  + \  \|(\Gamma, \partial_{\WW}\Gamma) \|_{L^2_T(B^{\gamma'}_{\infty,1})}+ \|(\Gamma, \partial_{\WW}\Gamma) \|_{L^1_T(B^{\gamma'+1}_{\infty,1})}  \nonumber \\
  &\lesssim  e^{\exp\{C E(T)^3\}} .
\end{align}

\section{Propagation of the $C^{k,\gamma}$ regularity of temperature front with $k\geq 3$}\label{sec:Ck-gam}

In this section, we shall prove that the $C^{k,\gamma}$ regularity of the temperature front $\partial D(t)$
is preserved  for all  $[0,T]$ where $T<T^*$ and for all  $k\ge 3$ and $\gamma\in (0,1)$.
By using Lemma \ref{lem:sr-cond}, it suffices to prove that 
\begin{equation}\label{eq:targ6}
  \big(\partial_\WW^\ell \WW \big)(t,\cdot) \in L^\infty(0,T; C^\gamma(\R^3)),\quad \forall \ell\in \{0,1,\cdots,k-1\},
\end{equation}
where $\WW = \{W^i(t)\}_{1\leq i\leq 5}$ is a family of divergence-free tangential vector fields
$W^i(t)$ which satisfies equation \eqref{eq:Wi}.

We start by proving the following statement. For all $k\geq 3$, $\gamma\in(0,1)$ and  $\gamma'\in (0,\min\{\gamma,1-3/p\})$, one has
\begin{align}\label{eq:Targ7}
  \|\WW\|_{L^\infty_T (\bb^{\gamma+1,k-2}_{\infty,\WW})} + \|\nabla v\|_{L^1_T (\bb^{\gamma,k-1}_{\infty,\WW})} + \|\nabla v\|_{L^2_T(\bb^{\gamma',k-1}_\WW)} + \|\Gamma\|_{L^\infty_T (\bb^{\gamma'-1,k-1}_\WW)} & \nonumber \\
  + \|\Gamma\|_{L^2_T (\bb^{\gamma',k-1}_\WW)} + \|\Gamma\|_{L^1_T (\bb^{\gamma'+1,k-1}_\WW)} &\leq H_{k-1}(T),
\end{align}
where $ H_{k-1}(T)< \infty$ is an upper bound depending on $T$ and $k-1$.
Assume for a while that \eqref{eq:Targ7} is proved,  then it is not difficult to see that
\begin{eqnarray*}
  \sum_{\ell=0}^{k-1}\|\partial_\WW^\ell \WW\|_{L^\infty_T(B^{\gamma}_{\infty,\infty})}
  &\leq& \|\WW\|_{L^\infty_T(\bb^{\gamma+1,k-2}_{\infty,\WW})}
  + \|  \partial_\WW^{k-1} \WW \|_{L^\infty_T(B^{\gamma}_{\infty,\infty})} \\
  &\leq& \|\WW\|_{L^\infty_T(\bb^{\gamma+1,k-2}_{\infty,\WW})}
  + C \|\WW\|_{L^\infty_T(B^{\gamma}_{\infty,\infty})}
  \|\partial_\WW^{k-2}\WW\|_{L^\infty_T (B^{\gamma+1}_{\infty,\infty})} \\
  &\lesssim& e^{\exp\{C E(T)\}} \|\WW\|_{L^\infty_T(\bb^{\gamma+1,k-2}_{\infty,\WW})} \\
  &\lesssim&  e^{\exp\{C E(T)\}} H_{k-1}(T) < \infty,
\end{eqnarray*}
which would implies the desired estimate \eqref{eq:targ6}.

So let us prove \eqref{eq:Targ7}. To do so, we shall use an induction method. Assume that for each $\ell\in \{1,2,\cdots,k-2\},$ the following estimate holds:
\begin{align}\label{assup.el}
   \|\WW\|_{L^\infty_T (\bb^{\gamma+1,\ell-1}_{\infty,\WW})} + \|\nabla v\|_{L^1_T (\bb^{\gamma,\ell}_{\infty,\WW})}
  + \|\nabla v\|_{L^2_T(\bb^{\gamma',\ell}_\WW)}   \|\Gamma\|_{L^\infty_T (\bb^{\gamma'-1,\ell}_\WW)} + \|\Gamma\|_{L^2_T (\bb^{\gamma',\ell}_\WW)}
  + \|\Gamma\|_{L^1_T (\bb^{\gamma'+1,\ell}_\WW)} \leq H_\ell(T) .
\end{align}
Then, we want to prove that this inequality is also true at the rank $\ell+1$, that is,
\begin{align}\label{es.el+1}
  & \|\WW\|_{L^\infty_T (\bb^{\gamma+1,\ell}_{\infty,\WW})} + \|\nabla v\|_{L^1_T (\bb^{\gamma,\ell+1}_{\infty,\WW})}
  + \|\nabla v\|_{L^2_T(\bb^{\gamma',\ell+1}_\WW)} \nonumber\\
  & + \|\Gamma\|_{L^\infty_T (\bb^{\gamma'-1,\ell+1}_\WW)} + \|\Gamma\|_{L^2_T (\bb^{\gamma',\ell+1}_\WW)}
  + \|\Gamma\|_{L^1_T (\bb^{\gamma'+1,\ell+1}_\WW)} \leq H_{\ell+1}(T).
\end{align}

The case $\ell=1$ in \eqref{assup.el} corresponds to \eqref{es.el=1}.
We also notice that under the condition \eqref{assup.el}, we have
$\|\WW\|_{L^\infty_T (\bb^{\gamma+1,\ell-1}_{\infty,\WW})} \lesssim H_\ell(T)$,
so that Lemma \ref{lem:prod-es2} (with $\sigma=\gamma$) and Lemma \ref{lem:Bes-equ}
can be applied by replacing $k$ with $\ell$.

Now, our main goal is to prove \eqref{es.el+1} under the assumption that \eqref{assup.el} holds.
We first get a control  of $\partial_{\WW}^\ell \nabla^2 \WW$ in $L^\infty_T(B^{\gamma-1}_{\infty,\infty})$.
Note that $\partial_\WW^\ell $ can be any $\partial_{W^1}^{\ell_1}\cdots \partial_{W^5}^{\ell_5}$
with $\ell_1 +\cdots +\ell_5 = \ell$, and $\WW = \{W^i\}_{1\leq i\leq 5}$ is vector-valued.
In view of equation \eqref{eq.nab2W} and the fact that $[\partial_\WW, \partial_t+v\cdot\nabla]=0$, we see that
\begin{equation}\label{eq.pelXp2X}
\begin{split}
  \partial_t (\partial_\WW^\ell \nabla^2 \WW) + v\cdot\nabla (\partial_\WW^\ell\nabla^2 \WW) & = \partial_\WW^{\ell+1} \nabla^2 v
  + 2\partial_\WW^\ell  (\nabla \WW \cdot\nabla^2v) + \partial_\WW^\ell  (\nabla^2 \WW\cdot\nabla v) \\
  & \quad -\partial_\WW^\ell  (\nabla^2v\cdot\nabla \WW) - 2 \partial_\WW^\ell  (\nabla v\cdot\nabla^2 \WW)
  :=  \sum_{i=1}^5 F_{\ell,i} .
\end{split}
\end{equation}

Applying the estimate \eqref{eq:T-sm2} to the above transport equation, we get
\begin{equation}\label{es.pelXp2X}
\begin{split}
  \|\partial_\WW^\ell \nabla^2\WW\|_{L^\infty_t (B^{\gamma-1}_{\infty,\infty})}
  &\lesssim \|  \partial_{\WW_0}^\ell\nabla^2 \WW_0\|_{B^{\gamma-1}_{\infty,\infty}}
  + \int_0^t \|\nabla v(\tau)\|_{L^\infty} \| \partial_\WW^\ell\nabla^2 \WW(\tau)\|_{B^{\gamma-1}_{\infty,\infty}} \dd \tau  \\
  & \quad +  \sum_{i=1}^5\int_0^t \|F_{\ell,i}\|_{B^{\gamma-1}_{\infty,\infty}} \dd \tau .
\end{split}
\end{equation}
For the initial data, since $\mathcal{W}_0=\{W^i_0\}_{1\leq i\leq 5}$
belongs to $C^{k-1,\gamma}(\R^3)$, then, following the same idea as the proof of \eqref{ext.thet}, we get that
\begin{eqnarray}\label{es.Fel0}
  \|\partial_{\WW_0}^\ell\nabla^2 \WW_0\|_{B^{\gamma-1}_{\infty,\infty}}
 \lesssim \| \WW_0\|_{B^{\ell+1+\gamma}_{\infty,\infty}}
 \lesssim \|\mathcal{W}_0\|_{C^{k-1,\gamma}} <\infty,
\end{eqnarray}
where the hidden constant $C$ depends on $\|\WW_0\|_{W^{\ell-1,\infty}}$. Using Lemmas \ref{lem:prod-es2} and \ref{lem:Bes-equ}, we find that, for $i=2,4$
\begin{align}\label{es.Fel.2.4}
  \|F_{\ell,i}\|_{L^1_t(B^{\gamma-1}_{\infty,\infty})}
  &\lesssim \int_0^t \Big(\|\partial_\WW^\ell(\nabla \WW\cdot\nabla^2 v)\|_{B^{\gamma-1}_{\infty,\infty}} + \|\partial_\WW^\ell(\nabla^2 v\cdot\nabla \WW)\|_{B^{\gamma-1}_{\infty,\infty}} \Big) \dd \tau \nonumber \\
  &\lesssim \int_0^t \Big( \|\nabla \WW\cdot\nabla^2v\|_{\bb^{\gamma-1,\ell}_{\infty,\WW} }+ \|\nabla^2v\cdot\nabla \WW\|_{\bb^{\gamma-1,\ell}_{\infty,\WW}} \Big) \dd \tau\nonumber\\
  &\lesssim \int_0^t \|\nabla\WW \|_{\bb^{0,\ell}_\WW}\|\nabla^2v\|_{\bb^{\gamma-1,\ell}_{\infty,\WW}} \dd \tau \nonumber \\
  &\lesssim \int_0^t \|\WW(\tau)\|_{\bb^{\gamma+1,\ell}_{\infty,\WW}} \|\nabla v(\tau)\|_{\bb^{\gamma,\ell}_{\infty,\WW}}\dd \tau,
\end{align}
and analogously, for $i=3,5$
\begin{align}\label{es.Fel.3.5}
  \|F_{\ell,i}\|_{L^1_t(B^{\gamma-1}_{\infty,\infty})}
  \lesssim  \int_0^t \|\nabla^2\WW\|_{\bb^{\gamma-1,\ell}_{\infty,\WW}} \| \nabla v
  \|_{\bb^{0,\ell}_\WW}\dd \tau
  \lesssim  \int_0^t \|\WW(\tau)\|_{\bb^{\gamma+1,\ell}_{\infty,\WW}} \|\nabla v(\tau) \|_{\bb^{\gamma,\ell}_{\infty,\WW}}\dd \tau.
\end{align}

It remains to get a control of the term $F_{\ell,1}$. Since we have \eqref{eq.nab2v}, we see that
\begin{equation}\label{es.peln2v}
\begin{split}
  \| \partial_\WW^{\ell+1} \nabla^2 v\|_{L^1_t(B^{\gamma-1}_{\infty,\infty})} & \lesssim
  \| \partial_\WW^{\ell+1}\nabla^2 \Lambda^{-2}\nabla\wedge \Gamma\|_{L^1_t(B^{\gamma-1}_{\infty,\infty})} \\
  &\quad + \| \partial_\WW^{\ell+1}\nabla^3\partial_3  \Lambda^{-4} \theta \|_{L^1_t(B^{\gamma-1}_{\infty,\infty})}
  + \| \partial_\WW^{\ell+1}\nabla^2 \Lambda^{-2} \theta \|_{L^1_t(B^{\gamma-1}_{\infty,\infty})}.
\end{split}
\end{equation}
For the last two terms in \eqref{es.peln2v}, by applying the estimate \eqref{eq:prod-es2-2} with $m(D)=\nabla^3\partial_3  \Lambda^{-4}$ or $m(D)=\nabla^2 \Lambda^{-2}$, one gets
\begin{eqnarray}\label{pXel.th}
\| \partial_\WW^{\ell+1}\nabla^3\partial_3  \Lambda^{-4} \theta\|_{L^1_t(B^{\gamma-1}_{\infty,\infty})}
  + \|\partial_\WW^{\ell+1}\nabla^2 \Lambda^{-2} \theta \|_{L^1_t(B^{\gamma-1}_{\infty,\infty})} \nonumber 
  &\leq&  \| \nabla^3\partial_3  \Lambda^{-4} \theta\|_{L^1_t(\bb^{\gamma-1,\ell+1}_{\infty,\WW})} + \|\nabla^2 \Lambda^{-2} \theta \|_{L^1_t(\bb^{\gamma-1,\ell+1}_{\infty,\WW})}\nonumber\\
  &\lesssim& \int_0^t \big(1+\|\WW(\tau)\|_{\bb^{1,\ell}_\WW}\big)
  \big( \|\theta (\tau)\|_{\bb^{\gamma-1,\ell}_{\infty,\WW}} + \|\theta(\tau)\|_{L^2}\big) \dd \tau \nonumber  \\
  && \   + \ \|\theta\|_{L^1_t(\bb^{\gamma-1,\ell+1}_{\infty,\WW})} 
\end{eqnarray}
Recalling that $[\partial_\WW, \partial_t+v\cdot\nabla]=0$, we see that, for all $j\in\{0,1,\cdots,\ell+1\}$, $\partial_\WW^j \theta$ satisfies that
\begin{align}\label{eq.Pxelth}
  \partial_t\partial_\WW^j \theta + v\cdot\nabla\partial_\WW^j \theta = 0, \quad \partial_\WW^j \theta|_{t=0}=\partial_{\WW_0}^j \theta_0.
\end{align}
Hence, Lemma \ref{lem:str-reg} and \eqref{eq:T-sm3}, \eqref{es.v.Lip} allow us to get that, for all $j\in \{0,1,\cdots,\ell+1\}$,
\begin{align*}
  \|\partial_{\WW}^j \theta\|_{L^\infty_t(B^{\gamma-1}_{\infty,\infty})}\lesssim e^{\|\nabla v\|_{L^1_t(L^\infty)}}
  \|\partial_{\WW_0}^j \theta_0\|_{B^{\gamma-1}_{\infty,\infty}}\lesssim e^{\exp\{ C E(T)\}} ,
 \end{align*}
and then 
\begin{align}\label{es.pXel.th2}
  \| \theta\|_{L^\infty_t(\bb^{\gamma-1,\ell+1}_{\infty,\WW})} \leq \sum_{j=0}^{\ell+1} \|\partial_{\WW}^j \theta \|_{B^{\gamma-1}_{\infty,\infty}}
  \lesssim e^{\exp\{C E(T)\}}.
\end{align}
Hence, by applying \eqref{es.pXel.th2}  into \eqref{pXel.th}, one obtains
\begin{align}\label{pXel.th2}
  \| (\partial_\WW^{\ell+1}\nabla^3\partial_3  \Lambda^{-4} \theta,\partial_\WW^{\ell+1}\nabla^2 \Lambda^{-2} \theta ) \|_{L^1_t(B^{\gamma-1}_{\infty,\infty})}
  \lesssim \Big(1+\int_0^t \|\WW(\tau)\|_{\bb^{1,\ell}_\WW}\dd \tau\Big),
 \end{align}
where the constant depends on $H_\ell(T)$. For the first term of the right-hand side in \eqref{es.peln2v}, it suffices to use  Lemma \ref{lem:prod-es2} to find that
\begin{eqnarray}\label{es.pXelGa0}
  \| \partial_\WW^{\ell+1}\nabla^2 \Lambda^{-2}\nabla\wedge \Gamma\|_{L^1_t(B^{\gamma-1}_{\infty,\infty})}
 & \leq& \| \nabla^2 \Lambda^{-2}(\nabla \wedge \Gamma)\|_{L^1_t(\bb^{\gamma-1,\ell+1}_{\infty,\WW})}  \nonumber\\
  &\lesssim& 
  \int_0^t \big( 1+\|\WW(\tau)\|_{\bb^{1,\ell}_\WW}\big)\big( \|\nabla \Gamma(\tau) \|_{\bb^{\gamma-1,\ell}_{\infty,\WW}} \|\Gamma(\tau)\|_{L^p}\big)\dd \tau \nonumber \\
  && \ +  \ \| \nabla \Gamma \|_{L^1_t(\bb^{\gamma-1,\ell+1}_{\infty,\WW})}
\end{eqnarray}
and by noticing that
\begin{align}\label{com.ell}
  [\nabla,\partial_\WW^\ell ]f=\sum_{j=0}^{\ell-1}\partial_\WW^j ([\nabla,\partial_\WW]\partial_\WW^{\ell-1-j}f)
  =\sum_{j=0}^{\ell-1}\partial_\WW^j (\nabla\WW\cdot\nabla\partial_\WW^{\ell-1-j}f),
\end{align}
we find, by using Lemma \ref{lem:Bes-equ}, that
\begin{align}\label{es:nab-Gam}
  \|\nabla \Gamma\|_{\bb^{\gamma-1,\ell+1}_{\infty,\WW}} & = \|\nabla \Gamma\|_{\bb^{\gamma-1,\ell}_{\infty,\WW}}
  +  \|\partial_\WW^{\ell+1} \nabla \Gamma\|_{B^{\gamma-1}_{\infty,\infty}} \nonumber \\
  & \lesssim \|\Gamma\|_{\bb^{\gamma,\ell}_{\infty,\WW}} +  \|\partial_\WW^{\ell+1}\Gamma\|_{B^{\gamma}_{\infty,\infty}}
  + \|[\nabla,\partial_\WW^{\ell+1}]\Gamma\|_{B^{\gamma-1}_{\infty,\infty}} \nonumber \\
  & \lesssim \|\Gamma\|_{\bb^{\gamma,\ell+1}_{\infty,\WW}}
  + \sum_{j=0}^\ell \|\nabla \WW \cdot \nabla \partial_\WW^{\ell-j} \Gamma \|_{\bb^{\gamma-1,j}_{\infty,\WW}} \nonumber \\
  & \lesssim \|\Gamma\|_{\bb^{\gamma,\ell+1}_{\infty,\WW}} + \sum_{j=1}^\ell \|\nabla \WW\|_{\bb^{0,j}_\WW}
  \|\partial_{\WW}^{\ell-j} \Gamma\|_{\bb^{\gamma,j}_{\infty,\WW}}
  \lesssim \|\Gamma\|_{\bb^{\gamma,\ell+1}_{\infty,\WW}} +  \|\WW\|_{\bb^{1,\ell}_\WW} \|\Gamma\|_{\bb^{\gamma,\ell}_{\infty,\WW}},
\end{align}
which combined with \eqref{es.pXelGa0} and \eqref{es:Gam.Lp} give, up to a constant which depends only on ${H_\ell(T)}$
\begin{eqnarray}\label{es.pXelGa}
  \| \partial_\WW^{\ell+1}\nabla^2 \Lambda^{-2}\nabla\wedge \Gamma\|_{L^1_t(B^{\gamma-1}_{\infty,\infty})}
  &\lesssim &   \int_0^t \big(1+\|\WW\|_{\bb^{1,\ell}_\WW} \big)\big( \| \Gamma \|_{\bb^{\gamma,\ell}_{\infty,\WW}}+1\big)\dd \tau  \nonumber \\
  && \ + \ \Gamma \|_{L^1_t(\bb^{\gamma,\ell+1}_{\infty,\WW})} 
\end{eqnarray}
Collecting all the above estimates \eqref{es.pelXp2X}-\eqref{es.pXelGa} allows us to write that, up to a constant which depends on $H_\ell(T)$, we have
\begin{eqnarray}\label{es.peln2v}
 \|\partial_\WW^\ell \nabla^2\WW\|_{L^\infty_t (B^{\gamma-1}_{\infty,\infty})}
  + \| \partial_\WW^{\ell+1} \nabla^2 v \|_{L^1_t(B^{\gamma-1}_{\infty,\infty})} 
  &\lesssim& \|\Gamma \|_{L^1_t(\bb^{\gamma,\ell+1}_{\infty,\WW})} \\
  &+& \int_0^t
  \big( \| \Gamma(\tau) \|_{\bb^{\gamma,\ell}_{\infty,\WW}} +  \|\nabla v(\tau) \|_{\bb^{\gamma,\ell}_{\infty,\WW}} +1\big)
  \|\WW(\tau)\|_{\bb^{\gamma+1,\ell}_{\infty,\WW}}\dd \tau + 1. \nonumber
\end{eqnarray}

Now, we focus on the term $ \partial_\WW^{\ell+1} \Gamma$. By using equation \eqref{eq.Gamm} with respect to $\Gamma$  and the identity $[\partial_\WW,\partial_t+v\cdot\nabla]=0$, we obtain that
\begin{align}\label{eq.pXel+1Ga}
  \partial_t (\partial_\WW^{\ell+1} \Gamma) + v\cdot\nabla (\partial_\WW^{\ell+1} \Gamma) - \Delta (\partial_\WW^{\ell+1} \Gamma)
  & = - [\Delta,\partial_\WW^{\ell+1} ]\Gamma +  \partial_\WW^{\ell+1} (\Omega\cdot \nabla v)
  + \partial_\WW^{\ell+1} ([\mathcal{R}_{-1},v\cdot\nabla]\theta)  \nonumber \\
  & := \sum_{i=1}^{3} J_{\ell+1,i}.
\end{align}
Thanks to the smoothing estimate \eqref{es.sm1}, we have that for all $\gamma'\in(0,\min\{\gamma,1-\frac{3}{p}\})$,
\begin{align}\label{esB.el.Ga}
  & \quad \|\partial_\WW^{\ell+1} \Gamma\|_{L^\infty_t (B^{\gamma'-1}_{\infty,1})}
  + \|\partial_\WW^{\ell+1} \Gamma\|_{L^2_t  (B^{\gamma'}_{\infty,1})} + \|\partial_\WW^{\ell+1} \Gamma\|_{L^1_t  (B^{\gamma'+1}_{\infty,1})}  \nonumber \\
  &\lesssim (1+t)\bigg(\|\partial_{\WW_0}^{\ell+1} \Gamma_0\|_{ B^{\gamma'-1}_{\infty,1}} + \sum_{i=1}^3 \|F_{\ell+1,i}\|_{L^1_t  (B^{\gamma'-1}_{\infty,1})}
  + \int_0^t \|\nabla v(\tau)\|_{L^\infty} \|\partial_\WW^{\ell+1} \Gamma(\tau)\|_{ B^{\gamma'-1}_{\infty,1}} \dd \tau \bigg).
\end{align}
As for the initial data, we use the relation $\Gamma_0=\Omega_0-\RR_{-1}\theta_0$,
together with \eqref{com.ell} and Lemmas \ref{lem:prod-es2}, \ref{lem:Bes-equ}, we obtain that for any $\gamma'\in(0,\min\{\gamma,1-\frac{3}{p}\})$,
\begin{eqnarray}\label{es.Fel.ini}
 \Vert \partial^{\ell+1}_{\WW_0}\Gamma_0 \Vert_{B^{\gamma'-1}_{\infty,1}}&\lesssim& \Vert \partial^{\ell+1}_{\WW_0}\nabla v_0 \Vert_{B^{\gamma'-1}_{\infty,1}}
  + \Vert \partial^{\ell+1}_{\WW_0}\RR_{-1}\theta_0 \Vert_{B^{\gamma'-1}_{\infty,1}} \nonumber\\
  &\lesssim& \Vert \nabla\partial^{\ell+1}_{\WW_0}v_0 \Vert_{B^{\gamma'-1}_{\infty,1}}
  + \sum_{j=0}^\ell \Vert\nabla \WW_0 \cdot \nabla \partial_{\WW_0}^{\ell-j} v_0 \Vert_{\bb^{\gamma'-1,j}_{\WW_0}}
  + \|\WW_0 \cdot \nabla\RR_{-1} \theta_0\|_{\bb^{\gamma'-1,\ell}_{\WW_0}} \nonumber \\
  &\lesssim& \Vert\partial^{\ell+1}_{\WW_0}v_0 \Vert_{W^{1,p}}+\sum_{j=0}^\ell \|\nabla \WW_0 \Vert_{\bb^{0,j}_{\WW_0}}
  \Vert \nabla \partial_{\WW_0}^{\ell-j} v_0 \Vert_{\bb^{\gamma'-1,j}_{\WW_0}}
  + \|\WW_0 \Vert_{\bb^{0,\ell}_{\WW_0}} \|\nabla\RR_{-1} \theta_0 \Vert_{\bb^{\gamma'-1,\ell}_{\WW_0}} \nonumber\\
  &\lesssim& \sum_{j=0}^{\ell+1} \Vert\partial^j_{\WW_0}v_0 \Vert_{W^{1,p}}
  + \|\WW_0 \Vert_{\bb^{0,\ell}_{\WW_0}} \big(1+ \Vert\WW_0 \Vert_{\bb^{1,\ell-1}_{\WW_0}} \big)
  \big(\Vert \theta_0 \Vert_{\bb^{\gamma'-1,\ell}_{\WW_0}}+\|\theta_0 \Vert_{L^2} \big) \nonumber \\
  &\lesssim& \Vert v_0 \Vert_{W^{\ell+2,p}} + \sum_{j=0}^\ell  \Vert\partial^{j}_{\WW_0}\theta_0 \Vert_{C^{-1,\gamma}} + \Vert\theta_0 \Vert_{L^2} < \infty,
\end{eqnarray}
where the constant in each step depends on $\|\WW_0\|_{\bb^{\gamma+1,\ell}_{\infty,\WW_0}}$ (which is control by $ \|\WW_0\|_{C^{k-1,\gamma}})$
and  where in the last line we have used the estimate $\displaystyle \sum_{j=0}^{\ell+1} \|\partial^j_{\WW_0}v_0\|_{W^{1,p}} \lesssim C({\|\WW_0\|_{W^{\ell,\infty}}})
\|v_0\|_{W^{\ell+2,p}}$.
Then, by noticing that
\begin{align}\label{eq.com.Ga}
  J_{\ell+1,1} &= -[\Delta,\partial_\WW]\partial_\WW^\ell \Gamma - \partial_\WW [\Delta, \partial_\WW^\ell] \Gamma  \nonumber\\
  &= -[\Delta, \partial_\WW] \partial_\WW^\ell \Gamma - \partial_\WW([\Delta,\partial_\WW] \partial_\WW^{\ell-1} \Gamma)-\partial_\WW^2 ([\Delta,\partial_\WW^{\ell-1}])\Gamma )\nonumber\\
  &= -\sum_{j=0}^\ell \partial_\WW^j\big( [\Delta,\partial_\WW]\partial_\WW^{\ell-j} \Gamma\big) \nonumber \\
  &= -\sum_{j=0}^\ell \partial_\WW^j \big(\Delta \WW \cdot \nabla \partial_\WW^{\ell-j} \Gamma \big) -
  \sum_{j=0}^\ell \partial_\WW^j \big(2 \nabla \WW \cdot \nabla^2 \partial_\WW^{\ell-j} \Gamma \big),
\end{align}
and by  Lemmas \ref{lem:prod-es2} and \ref{lem:Bes-equ} again, we get that
\begin{eqnarray}\label{es.Fel1a}
  \| J_{\ell+1,1}\|_{B^{\gamma'-1}_{\infty,1}}
  &\lesssim& \sum_{j=0}^\ell \|\Delta \WW \cdot \nabla \partial_\WW^{\ell-j}  \Gamma \|_{\bb^{\gamma'-1,j}_\WW}
  + \sum_{j=0}^\ell \| \nabla \WW \cdot \nabla^2 \partial_\WW^{\ell-j}  \Gamma \|_{\bb^{\gamma'-1,j}_\WW} \nonumber \\
  &\lesssim& \sum_{j=0}^\ell \Big( \|\Delta \WW\|_{\bb^{\gamma'-1,j}_\WW} \|\nabla \partial_\WW^{\ell-j} \Gamma\|_{\bb^{0,j}_\WW}
  + \sum_{i=0}^j\|\nabla \WW\|_{\bb^{0,i}_\WW} \|\nabla^2 \partial_\WW^{\ell-j}  \Gamma\|_{\bb^{\gamma'-1,j-i}_\WW}\Big) \nonumber \\
  &\lesssim& \sum_{j=0}^\ell  \|\WW\|_{\bb^{\gamma'+1,j}_\WW} \|\Gamma\|_{\bb^{1,\ell}_\WW}
  + \sum_{j=0}^\ell\sum_{i=0}^j\| \WW\|_{\bb^{1,i}_\WW} \| \partial_\WW^{\ell-j}  \Gamma\|_{\widetilde{\bb}^{\gamma'+1,j-i}_\WW} \nonumber \\
  &\lesssim& \|\WW\|_{\bb^{\gamma'+1,\ell}_\WW} \| \Gamma\|_{\bb^{\gamma'+1,\ell}_\WW},
\end{eqnarray}
where in the last line we have used the following estimate
\begin{eqnarray*}
  \sum_{j=0}^\ell \sum_{i=0}^j\| \WW\|_{\bb^{1,i}_\WW} \|\partial_\WW^{\ell-j} \Gamma\|_{\widetilde{\bb}^{\gamma'+1,j-i}_\WW}
  &\lesssim& \sum_{j=0}^\ell \sum_{i=0}^j\| \WW\|_{\bb^{1,i}_\WW} \Big( \|\partial_\WW^{\ell-j} \Gamma\|_{{\bb}^{\gamma'+1,j-i}_\WW}+\|\partial_\WW^{\ell-j} \Gamma\|_{{\bb}^{1,j-i}_\WW}\| \WW\|_{{\bb}^{\gamma'+1,j-i}_\WW}\Big)\\
  &\lesssim& \sum_{i=0}^\ell  \| \WW\|_{\bb^{1,i}_\WW} \Big( \| \Gamma\|_{{\bb}^{\gamma'+1,\ell-i}_\WW}
  +\| \Gamma\|_{{\bb}^{1,\ell-i}_\WW}\| \WW\|_{{\bb}^{\gamma'+1,\ell-i}_\WW}\Big)\\
  &\lesssim& \| \WW\|_{\bb^{1,\ell}_\WW} \| \Gamma\|_{{\bb}^{\gamma'+1,0}_\WW} \big(1+ \| \WW\|_{{\bb}^{\gamma'+1,0}_\WW}\big)\\
  && \ + \ \| \WW\|_{\bb^{1,\ell-1}_\WW} \big(\| \Gamma\|_{{\bb}^{\gamma'+1,\ell}_\WW} + \| \Gamma\|_{{\bb}^{1,\ell}_\WW}\| \WW\|_{{\bb}^{\gamma'+1,\ell}_\WW}\big) \\
  &\lesssim& \| \Gamma\|_{{\bb}^{\gamma'+1,\ell}_\WW} \| \WW\|_{{\bb}^{\gamma'+1,\ell}_\WW},
\end{eqnarray*}
where the constant depends  on $\| \WW\|_{L^\infty_T(\bb^{1+\gamma,\ell-1}_{\infty,\WW})}$ which is control (up to a constant) by $H_\ell(T)$.
It follows from \eqref{es.Fel1a} that
\begin{align}\label{es:Fel1b}
  \|J_{\ell+1,1}\|_{L^1_t(B^{\gamma'-1 }_{\infty,1})} \lesssim \int_0^t \|\WW(\tau)\|_{\bb^{\gamma'+1,\ell}_\WW} \| \Gamma(\tau)\|_{\bb^{\gamma'+1,\ell}_\WW} \dd \tau.
\end{align}
Regarding the term $J_{\ell+1,2}$, using the fact that ${B^{\gamma'}_{\infty,1}}$ is a Banach algebra, together with H\"older's inequality, we see that
\begin{eqnarray}\label{es.Fe112}
  \|  J_{\ell+1,2}\|_{L^1_t(B^{\gamma'-1 }_{\infty,1})}
  &\lesssim& \|\WW\|_{L^\infty_t(L^\infty)} \int_0^t \|\partial_\WW^\ell (\Omega \cdot \nabla v)\|_{B^{\gamma' }_{\infty,1}} \dd \tau \nonumber \\
  &\lesssim& \|\WW\|_{L^\infty_t(L^\infty)} \sum_{j=0}^\ell\sum_{i=0}^j
  \int_0^t \|\partial_\WW^i \Omega\cdot \partial_\WW^{j-i}  \nabla v \|_{B^{\gamma' }_{\infty,1}}\dd\tau\nonumber\\
  &\lesssim& \sum_{j=0}^\ell\sum_{i=0}^j \int_0^t \|\partial_\WW^i \Omega(\tau)\|_{B^{\gamma' }_{\infty,1}}
  \| \partial_\WW^{j-i}  \nabla v(\tau) \|_{B^{\gamma' }_{\infty,1}}\dd\tau\nonumber\\
  &\lesssim& \Big(\sum_{j=0}^\ell \| \partial_\WW^{j}  \Omega \|_{L^2_t (B^{\gamma }_{\infty,1})}\Big)
  \Big(\sum_{j=0}^\ell \| \partial_\WW^{j}  \nabla v \|_{L^2_t (B^{\gamma' }_{\infty,1})}\Big) \nonumber\\
  &\lesssim&  \|\nabla v\|^2_{L^2_t(\bb^{\gamma',\ell }_\WW)} \nonumber \\
  &\leq& C({H_\ell(T)}) .
\end{eqnarray}
Note that $\nabla([\RR_{-1},v\cdot\nabla]\theta)=[\nabla\RR_{-1},v\cdot\nabla]\theta-\nabla v\cdot\nabla \RR_{-1}\theta,$ where $\RR_{-1}$ is given by \eqref{eq:op-R-1} (which a sum of pseudo-differential operators of order -1) and that
\begin{align*}
  [\nabla\RR_{-1},v\cdot\nabla]\theta : =
  \big([\nabla\mathcal{R}_{-1,2},v\cdot\nabla]\theta,  -[\nabla\mathcal{R}_{-1,1},v\cdot\nabla]\theta,0\big)^t .
\end{align*}
Therefore, one obtains
\begin{eqnarray}\label{es.Fel3}
  \| J_{\ell+1,3}\|_{L^1_t(B^{\gamma'-1 }_{\infty,1})}
  &\lesssim& \| \WW\|_{L^\infty_t(L^\infty)}\| \nabla\partial_\WW^\ell ([\RR_{-1},v\cdot\nabla]\theta) \|_{L^1_t(B^{\gamma'-1 }_{\infty,1})} \nonumber \\
  &\lesssim& \|\partial_\WW^\ell ([ \nabla\RR_{-1},v\cdot\nabla]\theta) \|_{L^1_t(B^{\gamma'-1 }_{\infty ,1})}
  + \|\partial_\WW^\ell (\nabla v\cdot\nabla \RR_{-1}\theta) \|_{L^1_t(B^{\gamma'-1 }_{\infty,1})}\nonumber \\
  && \ +\ \|[\nabla,\partial_\WW^\ell ]([ \nabla\RR_{-1},v\cdot\nabla]\theta) \|_{L^1_t(B^{\gamma'-1 }_{\infty,1})}\nonumber \\
  &:=& G_1 +  G_2 + G_3.
\end{eqnarray}
Using Lemma \ref{lem:prod-es2}, \eqref{es.v.Lip}, \eqref{es.pXel.th2} and the inductive assumption \eqref{assup.el}, one finds that
\begin{eqnarray}\label{es:G1}
   \|G_1\|_{L^1_t(B^{\gamma'-1 }_{\infty,1})} &\leq& \|[\nabla \RR_{-1},v\cdot\nabla]\theta\|_{L^1_t(\bb^{\gamma'-1,\ell }_{\WW})}\nonumber \\
  &\lesssim& \big(\|\nabla v\|_{L^1_t(\bb^{0,\ell }_\WW)} + \|v\|_{L^1_t(L^\infty)}\big) \|\theta \|_{L^\infty_t(\bb^{\gamma'-1,\ell }_\WW)} \nonumber \\
  &\lesssim& \big(\|\nabla v\|_{L^1_t(\bb^{\gamma,\ell }_{\infty,\WW})} + \|v\|_{L^1_t(L^\infty)}\big)
  \|\theta \|_{L^\infty_t(\bb^{\gamma-1,\ell }_{\infty,\WW})} \nonumber \\
   &\leq& C({H_\ell(T)}),
\end{eqnarray}
and
\begin{eqnarray}\label{es:G2}
   \|G_2\|_{L^1_t (B^{\gamma'-1 }_{\infty,1})} &\leq& \|\nabla v\cdot \nabla \RR_{-1} \theta\|_{L^1_t(\bb^{\gamma'-1,\ell }_{\WW})} \nonumber \\
   &\lesssim&  \|\nabla v\|_{L^1_t(\bb^{0,\ell }_{\WW})} \|\nabla\RR_{-1}\theta \|_{L^\infty_t(\bb^{\gamma'-1,\ell }_\WW)} \nonumber \\
   &\lesssim& \|\nabla v\|_{L^1_t (\bb^{\gamma,\ell }_{\infty,\WW})}  \big(1+\|\WW\|_{L^\infty_t(\bb^{1,\ell-1 }_\WW)} \big)
  \big(\|\theta \|_{L^\infty_t(\bb^{\gamma'-1,\ell }_\WW)} + \|\theta\|_{L^\infty_t (L^2)}\big) \nonumber \\
  &\leq& C({H_\ell(T)}),
\end{eqnarray}
Since we have \eqref{com.ell} and by Lemma \ref{lem:prod-es2}, we deduce that
\begin{eqnarray}\label{eq:G3-1}
  \|G_3 \|_{L^1_t(B^{\gamma'-1 }_{\infty,1})}
  &\lesssim& \sum_{j=0}^{\ell-1} \|\nabla\WW\cdot\nabla\partial_\WW^{\ell-1-j}[ \RR_{-1},v\cdot\nabla]\theta \|_{L^1_t(\bb^{\gamma'-1,j }_\WW)}\nonumber \\
  &\lesssim& \sum_{j=0}^{\ell-1} \|\nabla\WW\|_{L^\infty_t(\bb^{0,j }_\WW)}
  \|\nabla\partial_\WW^{\ell-1-j}[ \RR_{-1},v\cdot\nabla]\theta \|_{L^1_t(\bb^{\gamma'-1,j}_\WW)}\nonumber \\
  &\lesssim& \|\nabla\WW\|_{L^\infty_t(\bb^{1,\ell-1 }_\WW)}\sum_{j=0}^{\ell-1}
  \|\nabla\partial_\WW^{\ell-1-j}[ \RR_{-1},v\cdot\nabla]\theta \|_{L^1_t(\bb^{\gamma'-1,j }_\WW)}\nonumber \\
  &\lesssim& \|\nabla\WW\|_{L^\infty_t(\bb^{1,\ell-1 }_\WW)}
  \Big( \|\nabla\big([ \RR_{-1},v\cdot\nabla]\theta \big)\|_{L^1_t(\bb^{\gamma'-1,\ell-1 }_\WW)}\nonumber \\
  && \ + \sum_{j=0}^{\ell-2} \|[\nabla,\partial_\WW^{\ell-1-j}]\big([ \RR_{-1},v\cdot\nabla]\theta \big)\|_{L^1_t(\bb^{\gamma'-1,j }_\WW)}\Big).
\end{eqnarray}
Then {\it{via}} \eqref{com.ell} and the estimates \eqref{es:G1}-\eqref{es:G2}, we get that, up to a constant which depends only on $H_\ell(T)$, one has
\begin{eqnarray}
  \|G_3 \|_{L^1_t(B^{\gamma'-1 }_{\infty,1})}
  &\lesssim& \|[\nabla \RR_{-1},v\cdot\nabla]\theta\|_{L^1_t(\bb^{\gamma'-1,\ell-1 }_{\WW})}
  +  \|\nabla v\cdot \nabla \RR_{-1} \theta\|_{L^1_t(\bb^{\gamma'-1,\ell-1 }_{\WW})} \nonumber \\
  && \ + \sum_{j=0}^{\ell-2}\sum_{i=0}^{\ell-2-j}
  \|\partial_\WW^{i}\big(\nabla\WW\cdot\nabla \partial_\WW^{\ell-2-j-i}([ \RR_{-1},v\cdot\nabla]\theta )\big) \|_{L^1_t(\bb^{\gamma'-1,j }_\WW)} \nonumber \\
  &\lesssim& 1+\sum_{0\leq j+i\leq\ell-2}
  \|\nabla\WW\cdot\nabla \partial_\WW^{\ell-2-j-i}([ \RR_{-1},v\cdot\nabla]\theta ) \|_{L^1_t(\bb^{\gamma'-1,j+i }_\WW)} \nonumber \\
  &\lesssim& 1 + \|\nabla \WW\|_{L^\infty_t (\bb^{0,\ell-2 }_\WW)}
  \Big( \|\nabla\big([ \RR_{-1},v\cdot\nabla]\theta \big)\|_{L^1_t(\bb^{\gamma'-1,\ell-2 }_\WW)} \nonumber \\
  && \ + \sum_{0\leq j+i\leq\ell-2} \|[\nabla,\partial_\WW^{\ell-2-j-i}] \big([ \RR_{-1},v\cdot\nabla]\theta \big)\|_{L^1_t(\bb^{\gamma'-1,j+i }_\WW)}\Big). \nonumber
\end{eqnarray}
By repeating the above process we find that
\begin{align}\label{eq:G3}
  \|G_3\|_{L^1_t(B^{\gamma'-1}_{\infty,1})}\leq C (H_\ell(T)).
\end{align}
Hence collecting the above estimates \eqref{esB.el.Ga}, \eqref{es.Fel.ini}, \eqref{es:Fel1b}-\eqref{es.Fe112}, \eqref{es:G1}-\eqref{es:G2} and \eqref{eq:G3}, we conclude that
\begin{eqnarray}\label{esB.el.Ga}
  \|\Gamma(t)\|_{\bb^{\gamma'-1,\ell+1 }_\WW} &+& \|\Gamma\|_{L^2_t  (\bb^{\gamma',\ell+1 }_\WW)} +\|\Gamma\|_{L^1_t  (\bb^{\gamma'+1,\ell+1 }_\WW)} =  \|\partial_\WW^{\ell+1} \Gamma\|_{B^{\gamma'-1 }_{\infty,1}} + \|\Gamma\|_{ \bb^{\gamma'-1,\ell }_\WW}
  + \|\partial_\WW^{\ell+1} \Gamma\|_{L^2_t  (B^{\gamma' }_{\infty,1})} \nonumber \\
  && \hspace{5cm}  + \ \ \|\Gamma\|_{L^2_t  (\bb^{\gamma',\ell }_\WW)} + \|\partial_\WW^{\ell+1} \Gamma\|_{L^1_t  (B^{\gamma'+1 }_{\infty,1})}+  \|\Gamma\|_{L^1_t  (\bb^{\gamma'+1,\ell }_\WW)} \nonumber\\
  && \hspace{-1cm} \lesssim 1 + \ \int_0^t \big(\|\nabla v(\tau)\|_{L^\infty} + \|\Gamma(\tau)\|_{\bb^{\gamma'+1,\ell }_\WW}\big)
  \big(\|\WW(\tau)\|_{\bb^{\gamma'+1,\ell }_{\infty,\WW}} + \ \|\Gamma(\tau)\|_{\bb^{\gamma'-1,\ell+1 }_\WW} \big) \dd \tau,
\end{eqnarray}
where the hidden constant depends on $H_\ell(T)$. Then, since $\bb^{\gamma'+1,\ell+1}_\WW \subset \bb^{\gamma,\ell+1}_{\infty,\WW} $,
by putting \eqref{esB.el.Ga} into \eqref{es.peln2v}, we obtain that for all $\gamma'\in (0,\min\{\gamma,1-3/p\})$,
\begin{eqnarray}\label{esB.el.X}
  && \|\Gamma(t)\|_{\bb^{\gamma'-1,\ell+1}_\WW} + \|\Gamma\|_{L^2_t  (\bb^{\gamma',\ell+1 }_\WW)} + \|\Gamma\|_{L^1_t  (\bb^{\gamma'+1,\ell+1 }_\WW)}
   + \|\partial_\WW^{\ell+1} \nabla^2 v\|_{L^1_t  (B^{\gamma'+1 }_{\infty,\infty})} + \|\partial_\WW^\ell  \nabla^2\WW(t)\|_{B^{\gamma'-1 }_{\infty,\infty}}\nonumber \\
  &\lesssim& 1 +  \int_0^t \big(\|\nabla v(\tau)\|_{\bb^{\gamma,\ell }_{\infty,\WW}}+ \|\Gamma(\tau)\|_{\bb^{\gamma'+1,\ell }_\WW}+1\big)
  \big( \|\WW(\tau)\|_{\bb^{\gamma+1,\ell }_{\infty,\WW}} + \|\Gamma(\tau)\|_{\bb^{\gamma'-1,\ell+1 }_{\WW}} \big) \dd \tau,
\end{eqnarray}
where the constant depends only on ${H_\ell(T)}$. Following the same step as the proof of \eqref{eq.com.Ga}, \eqref{com.ell} and using Lemmas \ref{lem:prod-es2}, \ref{lem:Bes-equ}, we find that
\begin{eqnarray}
   \|[\nabla^2,\partial_\WW^\ell  ]\WW\|_{L^\infty_t (B^{\gamma'-1 }_{\infty,\infty})}
   &\lesssim& \sum_{j=0}^{\ell-1} \|\nabla^2 \WW \cdot \nabla \partial_\WW^{\ell-1-j}  \WW \|_{L^\infty_t(\bb^{\gamma-1,j }_{\infty,\WW})}
   + \sum_{j=0}^{\ell -1}\| \nabla \WW \cdot \nabla^2 \partial_\WW^{\ell-1-j} \WW\|_{L^\infty_t(\bb^{\gamma-1,j }_{\infty,\WW})} \nonumber \\
  &\lesssim& \sum_{j=0}^{\ell-1} \|\nabla^2 \WW \|_{L^\infty_t(\bb^{\gamma-1,j }_{\infty,\WW})}
  \|\nabla \partial_\WW^{\ell-1-j}  \WW \|_{L^\infty_t(\bb^{0,j }_{\WW})} \nonumber \\
  && \ +  \ \sum_{j=0}^{\ell -1}\| \nabla \WW\|_{L^\infty_t(\bb^{0,j }_{\WW})} \| \nabla^2 \partial_\WW^{\ell-1-j} \WW\|_{L^\infty_t(\bb^{\gamma-1,j }_{\infty,\WW})}\nonumber \\
  &\lesssim& \|\WW\|_{L^\infty_t(\bb^{\gamma+1,\ell-1 }_{\infty,\WW})} \| \WW\|_{L^\infty_t(\bb^{1,\ell-1 }_{\WW})} \nonumber \\
  &\leq& C(H_\ell(T)),
\end{eqnarray}
and
\begin{align}
  \|[\nabla,\partial_\WW^{\ell+1} ]\nabla v\|_{L^1_t (B^{\gamma'-1 }_{\infty,\infty})}
  \lesssim \sum_{j=0}^\ell  \|\nabla \WW \cdot \nabla \partial_\WW^{\ell-j}  \nabla v \|_{L^1_t(\bb^{\gamma-1,j }_{\infty,\WW})}
  \lesssim \int_0^t\|\WW(\tau)\|_{\bb^{1,\ell}_\WW} \| \nabla v(\tau)\|_{\bb^{\gamma,\ell }_{\infty,\WW}}\dd\tau, \nonumber
\end{align}
Thus,  by using Lemma \ref{lemp:Bes-prop} together with the frequency decomposition (high/low), we see that
\begin{eqnarray}\label{com.eln2.X1}
   \| \WW \|_{L^\infty_t(\bb^{\gamma+1,\ell }_{\infty,\WW})}
  &=& \| \partial_\WW^\ell  \WW\|_{L^\infty_t(B^{\gamma+1 }_{\infty,\infty})} + \|\WW\|_{L^\infty_t(\bb^{\gamma+1,\ell-1 }_{\infty,\WW})} \nonumber \\
  &\lesssim&  \|\nabla^2\partial_\WW^\ell   \WW \|_{L^\infty_t(B^{\gamma-1 }_{\infty,\infty})}
  + \|\Delta_{-1}\partial_\WW^\ell   \WW \|_{L^\infty_t(B^{\gamma-1 }_{\infty,\infty})} + H_\ell(T) \nonumber \\
  &\lesssim&  \|\partial_\WW^\ell  \nabla^2 \WW \|_{L^\infty_t(B^{\gamma-1 }_{\infty,\infty})} +
  \|[\nabla^2,\partial_\WW^\ell ]\WW \|_{L^\infty_t(B^{\gamma-1 }_{\infty,\infty})}
  + \|\WW\|_{L^\infty_t(L^\infty)} \|\partial_\WW^{\ell-1}  \WW \|_{L^\infty_t(B^{\gamma }_{\infty,\infty})} \nonumber\\
  && \ + \ H_\ell(T)\nonumber \\
  &\lesssim&  \|\partial_\WW^\ell  \nabla^2 \WW \|_{L^\infty_t(B^{\gamma-1 }_{\infty,\infty})} + 1
\end{eqnarray}
where the constant depends on $H_\ell(T)$ in the last inequality. Moreover, we have that
\begin{eqnarray}\label{com.elnv.1}
  \| \nabla v\|_{L^1_t(\bb^{\gamma,\ell+1 }_{\infty,\WW})} &=& \|\partial_\WW^{\ell+1} \nabla v\|_{L^1_t(B^{\gamma }_{\infty,\infty})} +\| \nabla v\|_{L^1_t(\bb^{\gamma,\ell }_{\infty,\WW})}\nonumber\\
  &\lesssim& \|\partial_\WW^{\ell+1} \nabla^2 v\|_{L^1_t(B^{\gamma-1 }_{\infty,\infty})}+
  \|[\nabla,\partial_\WW^{\ell+1} ]   \nabla v \|_{L^1_t(B^{\gamma-1 }_{\infty,\infty})}+ \|\Delta_{-1}\partial_\WW^{\ell+1} \nabla v\|_{L^1_t(B^{\gamma-1 }_{\infty,\infty})}+ H_\ell(T)\nonumber \\
  &\lesssim& \|\partial_\WW^{\ell+1} \nabla^2 v\|_{L^1_t(B^{\gamma-1 }_{\infty,\infty})}
  + \int_0^t\|\WW(\tau)\|_{\bb^{1,\ell }_\WW} \| \nabla v(\tau)\|_{\bb^{\gamma,\ell }_{\infty,\WW}}\dd\tau + 1.
\end{eqnarray}
where again the constant depends on $H_\ell(T)$. Therefore, by \eqref{esB.el.X} and \eqref{com.eln2.X1}, \eqref{com.elnv.1}, one finds  that
\begin{eqnarray}
  && \|\Gamma(t)\|_{\bb^{\gamma'-1,\ell+1 }_\WW} + \| \WW(t) \|_{\bb^{\gamma+1,\ell }_{\infty,\WW}}
  + \|\Gamma\|_{L^2_t  (\bb^{\gamma',\ell+1 }_\WW)}   + \|\Gamma\|_{L^1_t  (\bb^{\gamma'+1,\ell+1 }_\WW)} + \| \nabla v\|_{L^1_t(\bb^{\gamma,\ell+1 }_{\infty,\WW})} \nonumber \\
  &\leq& C + C\int_0^t \big(\|\nabla v(\tau)\|_{\bb^{\gamma,\ell }_{\infty,\WW}}+\|\Gamma(\tau)\|_{\bb^{\gamma'+1,\ell }_\WW}+1\big)
  \big( \|\WW(\tau)\|_{\bb^{\gamma+1,\ell }_{\infty,\WW}} + \|\Gamma(\tau)\|_{\bb^{\gamma'-1,\ell+1 }_{\WW}} \big) \dd \tau ,\nonumber
\end{eqnarray}
where $C>0$ depends on $H_\ell(T)$.
Then, Gr\"onwall's inequality and the induction assumption \eqref{assup.el} allow us to get that
\begin{eqnarray}\label{es:BelX-a}
   \|\Gamma\|_{L^\infty_T(\bb^{\gamma'-1,\ell+1 }_\WW)}&+& \| \WW \|_{L^\infty_T(\bb^{\gamma+1,\ell }_{\infty,\WW})}
  + \|\Gamma\|_{L^2_T  (\bb^{\gamma',\ell+1 }_\WW)} + \|\Gamma\|_{L^1_T  (\bb^{\gamma'+1,\ell+1 }_\WW)}  + \| \nabla v\|_{L^1_T(\bb^{\gamma,\ell+1 }_{\infty,\WW})}\nonumber \\
 &\leq& C\exp\big\{C\big( T +\|\nabla v\|_{L^1_T(\bb^{\gamma,\ell }_{\infty,\WW})} + \|\Gamma\|_{L^1_T(\bb^{\gamma'+1,\ell }_\WW)}\big)\big\}\nonumber \\
  &\leq& C \exp\{CH_\ell(T)\} \nonumber \\
  &\leq& H_{\ell+1}(T).
\end{eqnarray}

Now, it remains to control the term $\|\nabla v\|_{L^2_T(\bb^{\gamma',\ell+1 }_\WW)}$.
By using low/high frequency decomposition, we get
\begin{eqnarray}\label{es.el.navg}
   \| \partial_\WW^{\ell+1}  \nabla v \|_{L^2_T(B^{\gamma'}_{\infty,1})}
   &\lesssim& \|\Delta_{-1} \partial_\WW^{\ell+1} \nabla v \|_{L^2_T(L^\infty)} + \|\nabla \partial_\WW^{\ell+1}  \nabla v \|_{L^2_T(B^{\gamma'-1 }_{\infty,1})} \\
  &\lesssim& \|\partial_\WW^\ell \nabla v\|_{L^2_T(B^{\gamma' }_{\infty,1})} +
  \|[\nabla, \partial_\WW^{\ell+1}] \nabla v\|_{L^2_T(B^{\gamma'-1 }_{\infty,1})} + \ \|\partial_\WW^{\ell+1}\nabla^2 v\|_{L^2_T(B^{\gamma'-1 }_{\infty,1})}. \nonumber
\end{eqnarray}
Thanks to \eqref{com.ell} and Lemma \ref{lem:prod-es2}, we deduce that
\begin{eqnarray}\label{es.el.navg2}
   \| [\nabla,\partial_\WW^{\ell+1}]  \nabla v \|_{L^2_T(B^{\gamma'-1 }_{\infty,1})}
  &\lesssim& \sum_{j=0}^\ell \|\nabla\WW\cdot\nabla\partial_\WW^{\ell-j}\nabla v\|_{L^2_T(\bb^{\gamma'-1,j }_\WW)} \nonumber\\
  &\lesssim&  \sum_{j=0}^\ell \|\WW\|_{L^\infty_T(\bb^{1,j }_\WW)} \|\nabla\partial_\WW^{\ell-j}\nabla v\|_{L^2_T(\bb^{\gamma'-1,j }_\WW)} \nonumber \\
  &\lesssim& \|\WW\|_{L^\infty_T(\bb^{1,\ell }_\WW)} \|\nabla v\|_{L^2_T(\bb^{\gamma',\ell }_\WW)}.
\end{eqnarray}
By following the same line as the proof of \eqref{es.peln2v}, it follows from \eqref{es:BelX-a} that
\begin{eqnarray}\label{es.el.navg3}
  \|\partial_\WW^{\ell+1}  \nabla^2 v \|_{L^2_T(B^{\gamma'-1 }_{\infty,1})} &\leq&  \|\nabla^2 v\|_{L^2_T(\bb^{\gamma'-1,\ell+1 }_\WW)} \nonumber\\
  &\lesssim& \|\nabla^2 \Lambda^{-2}\nabla\wedge \Gamma\|_{L^2_T(\bb^{\gamma'-1,\ell+1 }_\WW)} + \|(\nabla^3\partial_3  \Lambda^{-4} \theta ,\nabla^2 \Lambda^{-2} \theta) \|_{L^2_T(\bb^{\gamma'-1,\ell+1 }_\WW)}\nonumber\\
  &\lesssim& \big(1+\|\WW\|_{L^\infty_T(\bb^{1,\ell }_\WW)} \big) \big(\|\nabla \Gamma \|_{L^2_T(\bb^{\gamma'-1,\ell+1 }_\WW)} +   \| \theta\|_{L^2_T(\bb^{\gamma'-1,\ell+1 }_\WW)} +1 \big) \nonumber \\
  &\lesssim& \big(1+\|\WW\|_{L^\infty_T(\bb^{\gamma+1,\ell }_{\infty, \WW})} \big)
  \big(\|\Gamma \|_{L^2_T(\bb^{\gamma',\ell+1 }_\WW)} +   \| \theta\|_{L^2_T(\bb^{\gamma-1,\ell+1 }_{\infty,\WW})} +1 \big).
\end{eqnarray}
Collecting all the estimates from \eqref{es.el.navg} to \eqref{es.el.navg3}, and using \eqref{assup.el}, \eqref{es.pXel.th2} and \eqref{es:BelX-a},
we find that
\begin{eqnarray}\label{es.nvL2tg-a}
  \|\nabla v\|_{L^2_T(\bb^{\gamma',\ell+1 }_\WW)}&=& \|\partial_\WW^{\ell+1}\nabla v\|_{L^2_T(B^{\gamma' }_{\infty,1})}+\|\nabla v\|_{L^2_T(\bb^{\gamma',\ell }_\WW)}\nonumber\\
  &\lesssim& \big(\|\WW\|_{L^\infty_T(\bb^{1+\gamma,\ell }_{\infty,\WW})}+1\big)\big(
  \|\nabla v\|_{L^2_T(\bb^{\gamma',\ell }_\WW)}+ \| \Gamma \|_{L^2_T(\bb^{\gamma',\ell+1 }_\WW)} + 1 \big) \nonumber\\
  &\lesssim&   \exp\{CH_\ell(T)\} \nonumber \\
  &\leq&   H_{\ell+1}(T) .
\end{eqnarray}
Therefore, \eqref{es.nvL2tg-a} and \eqref{es:BelX-a} give the inequality \eqref{es.el+1}, as desired.
The induction method finally implies the wanted estimates \eqref{eq:Targ7} and \eqref{eq:targ6}, hence the proof is complete.
\qed
\section{Appendix}\label{sec:append}

In this section we give the proof of Lemmas \ref{lem:parW-mDf} and \ref{lem:L2Linf}.

\begin{proof}[Proof of Lemma \ref{lem:parW-mDf}]
  Bony's decomposition gives that
\begin{eqnarray}\label{eq:parW-mDf1}
  \|\partial_\mathcal{W} m(D) f\|_{B^{-\epsilon}_{p,r}} \leq \|(T_{\mathcal{W}\cdot \nabla}) m(D) f\|_{B^{-\epsilon}_{p,r}}
  + \| T_{\nabla m(D) f} \cdot \mathcal{W}  \|_{B^{-\epsilon}_{p,r}}
  + \|R(\mathcal{W}\cdot, \nabla m(D) f)\|_{B^{-\epsilon}_{p,r}}.
\end{eqnarray}
Thanks to the spectrum support property of the dyadic blocks together with the fact that $\nabla \Delta_{-1} m(D)$ is a bounded operator on $L^p$ with $1\leq p\leq \infty$,
we have that for all $q\geq -1$,
\begin{eqnarray*}
   2^{-q\epsilon}\| \Delta_q (T_{\nabla m(D)f}\cdot \mathcal{W})\|_{L^p}
  &\leq& 2^{-q\epsilon} \sum_{j\in \N,|j-q|\leq 4} \|\Delta_q (\Delta_j \mathcal{W}\cdot S_{j-1}\nabla m(D) f)\|_{L^p} \\
  &\lesssim&  2^{-q \epsilon} \sum_{j\in \N, |j-q|\leq 4} 2^{-j\gamma} \|\mathcal{W}\|_{C^\gamma} \Big(\|\Delta_{-1} \nabla m(D) f\|_{L^p}
  + \sum_{0\leq j'\leq j} \|\Delta_{j'} \nabla m(D)f\|_{L^p} \Big) \\
  & \lesssim&  c_q \|\mathcal{W}\|_{C^\gamma} \|f\|_{B^{1-\epsilon-\gamma}_{p,r}},
\end{eqnarray*}
Moreover, using the divergence-free property of $\mathcal{W}$, one has that
\begin{eqnarray*}
  2^{-q \epsilon} \|\Delta_q (R(\mathcal{W}\cdot,\nabla m(D)f))\|_{L^p} 
  &\leq& 2^{-q\epsilon} \sum_{j\geq \max\{q-3,2\}} \|\Delta_q \divg (\Delta_j \mathcal{W} \, \widetilde{\Delta}_j m(D)f)\|_{L^p} \\
  &&  \ + \ 2^{-q\epsilon} \sum_{-1\leq j\leq 1} 1_{\{-1\leq q\leq 5\}} \|\Delta_q (\Delta_j \mathcal{W} \cdot \widetilde{\Delta}_j \nabla m(D) f)\|_{L^p} \\
  &\lesssim&  \sum_{j\geq \max\{q-3,2\}} 2^{q(1-\epsilon)}  \|\Delta_j\mathcal{W}\|_{L^\infty} \|\widetilde{\Delta}_j m(D) f\|_{L^p} \\
  && \ + \  \sum_{-1\leq j\leq 1}  1_{\{-1\leq q\leq 5\}} \|\Delta_j \mathcal{W}\|_{L^\infty} \|\widetilde{\Delta}_j \nabla m(D) f\|_{L^p} \\
  &\lesssim&  \sum_{j\geq \max\{q-4,1\}} 2^{q(1-\epsilon)} 2^{-j\gamma} \|\mathcal{W}\|_{C^\gamma} \|\Delta_j f\|_{L^p} \\
  && \ + \    \sum_{-1\leq j\leq 2}  1_{\{-1\leq q\leq 5\}} \|\mathcal{W}\|_{L^\infty} \|\Delta_j f\|_{L^p} \\
  &\lesssim&  c_q \|\mathcal{W}\|_{C^\gamma} \|f\|_{B^{1-\epsilon-\gamma}_{p,r}},
\end{eqnarray*}
where $\{c_q\}_{q\geq -1}$ is such that $\|c_q\|_{\ell^r}=1$. Hence, we immediately get that
\begin{equation}\label{eq:parW-mDf2}
  \| T_{\nabla m(D) f} \cdot \mathcal{W}  \|_{B^{-\epsilon}_{p,r}}
  + \|R(\mathcal{W}\cdot, \nabla m(D) f)\|_{B^{-\epsilon}_{p,r}} \leq C \|\mathcal{W}\|_{C^\gamma} \|f\|_{B^{1-\epsilon-\gamma}_{p,r}}.
\end{equation}
Note that there exists a bump function $\widetilde{\phi} \in C^\infty_c(\R^d)$ supported on an annulus and
$\widetilde{h}= \mathcal{F}^{-1}(m \widetilde{\phi})\in \mathcal{S}(\R^d)$ so that
\begin{align*}
  (T_{\mathcal{W}\cdot\nabla})m(D)f  = \sum_{j\in \N} S_{j-1} \mathcal{W} \cdot \nabla \Delta_j m(D) f
  = - \sum_{j\in\N} [m(D)\widetilde{\phi}(2^{-j}D), S_{j-1}\mathcal{W}\cdot]\nabla \Delta_j f + m(D) (T_{\mathcal{W}\cdot\nabla}) f,
\end{align*}
where $m(D) \widetilde{\phi}(2^{-j} D) = 2^{jd} \widetilde{h}(2^j\cdot)*$, and
\begin{align*}
  [m(D)\widetilde{\psi}(2^{-j}D), S_{j-1}\mathcal{W}\cdot]\nabla \Delta_j f(x)
  = \int_{\R^d} \widetilde{h}(y)  \big(S_{j-1}\mathcal{W}(x+2^{-j}y) - S_{j-1}\mathcal{W}(x)\big) \cdot \nabla \Delta_j f(x+2^{-j}y) \dd y.
\end{align*}
Then, we have that for all $q\geq -1$,
\begin{eqnarray*}
   2^{-q\epsilon} \|\Delta_q (T_{\mathcal{W}\cdot \nabla})m(D) f\|_{L^p} 
  &\leq& 2^{-q\epsilon} \sum_{j\in \N, |j-q|\leq 4} \|\Delta_q \big([m(D)\widetilde{\phi}(2^{-j}D),
  S_{j-1}\mathcal{W}\cdot\nabla] \Delta_j f \big)\|_{L^p} \\
 && \ + \ 2^{-q\epsilon} \big\|\Delta_q m(D) (T_{\mathcal{W}\cdot\nabla}) f \big\|_{L^p} \\
  &\lesssim&  2^{-q\epsilon} \sum_{j\in\N,|j-q|\leq 4} 2^{-j} \|\nabla S_{j-1}\mathcal{W}\|_{L^\infty} \|\nabla \Delta_j f\|_{L^p}
  +  c_q \|m(D) (T_{\mathcal{W}\cdot\nabla}) f\|_{B^{-\epsilon}_{p,r}} \\
  &\lesssim&  2^{-q\epsilon} \sum_{j\in\N,|j-q|\leq 4} \Big(\sum_{-1\leq j'\leq j-1} 2^{j'(1-\gamma)} \|\mathcal{W}\|_{C^\gamma}\Big)
  \|\Delta_j f\|_{L^p} \\
  &&   \ + \    c_q \sum_{0\leq j\leq 3}\|\Delta_{-1}m(D) \divg \big(S_{j-1} \mathcal{W} \, \Delta_j f\big)\|_{L^p}
  +   c_q \|(T_{\mathcal{W}\cdot\nabla}) f\|_{B^{-\epsilon}_{p,r}} \\
  &\lesssim&  c_q \big( \|\mathcal{W}\|_{C^\gamma} \|f\|_{B^{1-\epsilon-\gamma}_{p,r}} + \|(T_{\mathcal{W}\cdot \nabla}) f\|_{B^{-\epsilon}_{p,r}} \big).
\end{eqnarray*}
Following the same approach as the proof of \eqref{eq:parW-mDf2}, the above inequality gives
\begin{align}\label{eq:parW-mDf3}
  \|(T_{\mathcal{W}\cdot\nabla})m(D) f \|_{B^{-\epsilon}_{p,r}} & \lesssim \|\mathcal{W}\|_{C^\gamma}
  \|f\|_{B^{1-\epsilon-\gamma}_{p,r}}   + \|\partial_\mathcal{W} f\|_{B^{-\epsilon}_{p,r}}
  + \|(\Id - T_{\mathcal{W}\cdot\nabla}) f\|_{B^{-\epsilon}_{p,r}} \nonumber \\
  & \lesssim   \|\mathcal{W}\|_{C^\gamma}  \|f\|_{B^{1-\epsilon-\gamma}_{p,r}}   +  \|\partial_\mathcal{W} f\|_{B^{-\epsilon}_{p,r}} ,
\end{align}
which together with \eqref{eq:parW-mDf2} gives the desired estimate \eqref{eq:par-W-mDf}.
\end{proof}

\begin{proof}[Proof of Lemma \ref{lem:L2Linf}]
We follow the same approach as Chemin and Gallagher \cite{CheG06} 

in their study of the 2D Navier-Stokes equation with an external force. More precisely, we first establish the following control from \eqref{eq:v-H1/2es}, that is,  for all $\rho\in[2,\infty]$,
\begin{equation}\label{es.tec}
  \|v\|_{\widetilde{L}^\rho_T(\dot H^{\frac{1}{2}+ \frac{2}{\rho}})}^2 = \sum_j 2^{j(1+\frac 4\rho)}\|\dot\Delta_jv\|_{L^\rho_T(L^2)}^2\lesssim E(T)^2. \quad 
\end{equation}
By noticing that the equation verified by $v$ may be viewed as a nonhomogeneous heat equation, that is
\begin{equation}\label{eq.v.int}
  \partial_t v-\Delta v=-\mathbb{P}\divg(v\otimes v)+\mathbb{P}(\theta e_3),\quad v(0,x) = v_0(x),
\end{equation}
and using the smoothing effect estimate of the heat flow \eqref{eq:heat-Besov} (with $s=\frac{1}{2}$, $\rho_1=2$, $p=r=2$), we get
\begin{align}\label{es.t1}
  \sum_j2^{j(1+\frac4\rho)}\|\dot\Delta_jv\|_{L^\rho_T(L^2)}^2\lesssim \|v_0\|^2_{\dot H^{\frac12}} + \int^T_0\|\mathbb{P}(v\cdot\nabla v, \theta e_3)(t)\|_{\dot H^{-\frac12}}^2 \dd t.
\end{align}
Then, an easy computation gives that
\begin{align}\label{eq:v-H-1/2}
  \|\mathbb{P}(v\cdot\nabla v,\theta)(t)\|_{\dot H^{-\frac12}} \lesssim \|v\cdot\nabla v\|_{L^{\frac32}}+\|\theta\|_{L^\frac32}
  \lesssim \|\nabla v\|^2_{L^2}+\|\theta\|_{L^\frac32} \lesssim \| v\|_{\dot H^{\frac12}} \| v\|_{\dot H^{\frac32}}+\|\theta_0\|_{L^\frac32},
\end{align}
from which  it follows that
\begin{align*}
  \int_0^T\|\mathbb{P}(v\cdot\nabla v, \theta)(t)\|^2_{\dot H^{-\frac{1}{2}}} \dd t
  \lesssim \| v\|^2_{L^\infty_T(\dot H^{\frac12})}  \| v\|^2_{L^2_T(\dot H^{\frac32})} + \|\theta_0\|^2_{L^\frac32}T.
\end{align*}
Hence using this control in \eqref{es.t1} and \eqref{eq:v-H1/2es} leads to the desired estimate \eqref{es.tec}. \\

Now, we split the solution $v$ of equation \eqref{eq.v.int} into $v= h + w$ where $h$ and $w$ verify
\begin{equation}\label{eqh.v.int}
  h_t-\Delta h=\mathbb{P}(\theta e_3),\quad h(0,x) = v_0(x),
\end{equation}
and
\begin{equation}\label{eqw.v.int}
  w_t-\Delta w=-\mathbb{P}\divg(v\otimes v),\quad w(0,x) = 0.
\end{equation}
Duhamel's formula gives
\begin{equation*}\label{eqh.v.int'}
  h(t)=e^{t\Delta} v_0+\int_0^te^{(t-\tau)\Delta}\mathbb{P}(\theta(\tau) e_3) \dd \tau,
\end{equation*}
therefore, we have
\begin{equation*}\label{esh.v.int'}
  \|h\|_{L^2_T (L^\infty)} \leq \|e^{t\Delta} v_0\|_{L^2(\R^+;L^\infty)} + \int_0^T  \|e^{t\Delta} \mathbb{P}\theta(\tau) \|_{L^2(\R^+_t;L^\infty)}\dd \tau.
\end{equation*}
Since $\|f\|_{\dot B^{-1}_{\infty,2}} \approx \|e^{t\Delta} f\|_{L^2(\R^+;L^\infty)}$ (see {\it e.g.} \cite{BCD11} or \cite{PGLR1})
and $\dot H^{\frac 12}(\R^3) \hookrightarrow \dot B^{-1}_{\infty,2}(\R^3)$, we get
\begin{align}\label{esh.v.int''}
  \|h\|_{L^2_T(L^\infty)} \lesssim\|v_0\|_{\dot H^{\frac12}}+\int_0^T \| \mathbb{P}\theta(\tau) \|_{\dot B^{-1}_{\infty,2}} \dd \tau
  \lesssim\|v_0\|_{\dot H^{\frac12}}+\|\theta_0\|_{L^1\cap L^s}T \lesssim E(T),
\end{align}
where in the last inequality we have used that, for all $s\in (3,\infty]$, one has
\begin{align*}
  \|\mathbb{P} \theta(t)\|_{\dot B^{-1}_{\infty,2}} \lesssim \|\theta(t)\|_{\dot B^{\frac{3}{s}-1}_{s,2}} \lesssim  \|\theta(t)\|_{L^1\cap L^s} \lesssim  \|\theta_0\|_{L^1\cap L^s}.
\end{align*}

Then, we will try to get a control of the  $L^2_T(L^\infty)$ of $w$. Using  \eqref{eqw.v.int} and then  Bernstein's inequality and Plancherel's formula, one gets
\begin{align}\label{es.w}
  \|\dot\Delta_jw(t)\|_{L^\infty} \lesssim 2^{\frac{5}{2}j}\int_0^t e^{-c2^{2j}(t-\tau)} \|\dot\Delta_j(v\otimes v)(\tau)\|_{L^2} \dd \tau.
\end{align}
Using Bony's para-product decomposition for any tempered distributions $a$ and $b$
\begin{align*}
  \dot\Delta_j(ab)=\sum_{j'\ge j-4}\Delta_j(\dot S_{j'}a\dot\Delta_{j'}b) + \sum_{j'\ge j-4}\Delta_j(\dot\Delta_{j'}a\dot S_{j'+1}b),
\end{align*}
we see that by H\"older's inequality, one has
\begin{align*}%\label{es.w1}
  \|\dot\Delta_j(v\otimes v)\|_{L^{\frac {2\rho}{\rho+2}}_T(L^2)}
  \lesssim \sum_{j'\ge j-4}\|\dot S_{j'+1}v\|_{L^\rho _T(L^\infty)}\|\dot\Delta_{j'}v\|_{L^2_T(L^2)} .
\end{align*}
Bernstein's inequality, Young's inequality and estimate \eqref{es.tec} allow us to get that, for all $\rho\in(2,\infty]$,
\begin{eqnarray*}
  \|\dot S_{j'+1} v\|_{L^\rho_T (L^\infty)} &\lesssim&  2^{j'(1-\frac2\rho)} \sum_{k\leq j'}
  2^{k(\frac{1}{2} + \frac{2}{\rho})} \|\dot\Delta_k v\|_{L^\rho_T (L^2)} 2^{(k-j')(1-\frac{2}{\rho})} \nonumber\\
  &\lesssim&  c_{j'} 2^{j'(1-\frac2\rho)}\frac \rho{\rho-2} E(T),
\end{eqnarray*}
where $\{c_j\}_{j\in \Z}$ satisfies $\|c_j\|_{\ell^2(\Z)}=1$.
Applying Young's inequality in time variable to \eqref{es.w}, one finds that
\begin{eqnarray*}%\label{facile-}
  \|\dot\Delta_j w \|_{L^2_T(L^\infty)} &\lesssim& 2^{\frac 52j} \|e^{-c2^{2j}t}\|_{L^{\frac{\rho}{\rho-1}}([0,T])}
  \|\dot\Delta_j(v\otimes v)( t)\|_{L^{\frac {2\rho}{\rho+2}}(0,T;L^2)} \nonumber\\
  &\lesssim& \frac \rho{\rho-2} E(T) \sum_{j'\ge j-4}c_{j'}\|\dot\Delta_{j'}v\|_{L^{2}_T(L^2)} 2^{\frac32j'}
  2^{(j-j')(\frac 12+\frac2\rho)}.
\end{eqnarray*}
Hence by taking the $\ell^1(\Z)$-norm on $j\in \Z$, together with \eqref{eq:v-H1/2es}, we find that, for all $\rho\in (3,\infty)$,
\begin{align*}
  \|w\|_{L^2_T(L^\infty)} \leq \sum_{j\in\Z}\|\dot\Delta_jw(t)\|_{L^2_T(L^\infty)}
  \leq C\frac \rho{\rho-2}\|v\|_{L^2_T(\dot H^{\frac{3}{2}})} E(T) \lesssim  E(T)^{\frac{3}{2}}.
\end{align*}
This, combined with \eqref{esh.v.int''}, gives the desired control \eqref{es.L2Linf}.

In order to show \eqref{es.uni0}, by making use of Lemma \ref{lem:heat-Besov}, Minkowski's inequality and the  fact that the  Leray projector $\mathbb{P}$ is bounded in $L^p\,(1<p<\infty)$ (\cite{Cann}, \cite{PGLR1}),  we get from the equation \eqref{eq.v.int} that for every $q \geq 2$,
\begin{align}\label{es.uni00}
  \|v\|_{L^\infty_T(\dot B^{-1+\frac3q}_{q,\infty})}+\|v\|_{\widetilde{L}^1_T(\dot B^{1+\frac3q}_{q,\infty})} 
  &\lesssim \|v_0\|_{\dot B^{-1+\frac3q}_{q,\infty}}+\|\mathbb{P}(v\cdot\nabla v-\theta e_3)\|_{\widetilde{L}^1_T(\dot B^{-1+\frac3q}_{q,\infty})}\nonumber\\
  &\lesssim \|v_0\|_{\dot H^{\frac12}}+\|v\cdot\nabla v\|_{{L}^1_T(\dot B^{-1+\frac3q}_{q,\infty})}+\|\theta \|_{{L}^1_T(\dot B^{-1+\frac3q}_{q,\infty})} .
\end{align}

Since $\theta$ has a rough regularity, we consider $q>3$ and then
\begin{align}\label{es.uni01}
  \|\theta \|_{L^1_T(\dot B^{-1+\frac{3}{q}}_{q,\infty})}\lesssim  \|\theta\|_{L^1_T(\dot B^0_{3,\infty})}\lesssim \|\theta\|_{L^1_T(L^1\cap L^3)}\lesssim \|\theta_0\|_{L^1\cap L^3}T.
\end{align}
Thanks to \eqref{eq:prod-es} with $(p,r) = (q,\infty)$ and $q>3$, we get, by H\"older's and Bernstein's inequalities, that
\begin{equation}\label{es.uni02}
  \|v\cdot \nabla v\|_{L^1_T(\dot B^{-1+\frac3q}_{q,\infty})} \lesssim   \|v\|_{L^2_T(L^\infty)} \|\nabla v\|_{L^2_T(\dot B^{-1+\frac3q}_{q,\infty})} \lesssim   \|v\|_{L^2_T(L^\infty)} \|v\|_{L^2_T(\dot H^{\frac32})} .
\end{equation}
Finally, by using \eqref{es.uni01} and \eqref{es.uni02} into \eqref{es.uni00}, together with  \eqref{eq:v-H1/2es} and \eqref{es.L2Linf} one obtains  the desired control \eqref{es.uni0}.

\end{proof}

\section*{Acknowledgments}
%%%%%%%%%%%%%%%
\noindent     L. Xue was partially supported by National Key Research and Development Program of China (No. 2020YFA0712900) and National Natural Science Foundation of China (No. 11771043).

\vskip0.2cm

\vfill

\begin{flushleft}

\textbf{Omar Lazar}\\
College of Engineering and Technology,\\
American University of the Middle East,\\
Kuwait\\

and\\

Departamento de An\'alisis Matem\'atico \& IMUS,\\
Universidad de Sevilla,\\
Spain

\vspace{1cm}

\textbf{Yatao Li}\\
Laboratory of Mathematics and Complex Systems (MOE)\\
School of Mathematical Sciences \\
Beijing Normal University, \\
Beijing 100875, P.R. China,\\

\vspace{1cm}

\textbf{Liutang Xue}\\
Laboratory of Mathematics and Complex Systems (MOE)\\
School of Mathematical Sciences \\
Beijing Normal University, \\
Beijing 100875, P.R. China,\\

\end{flushleft}

\end{document}